\theoremstyle{plain}
\newtheorem{theorem}{Theorem}
\newtheorem*{theorem*}{Theorem}
\newtheorem{lemma}{Lemma}
\newtheorem{proposition}{Proposition}
\newtheorem*{proposition*}{Proposition}
\newtheorem{corollary}{Corollary}
\theoremstyle{definition}
\newtheorem{definition}{Definition}
\theoremstyle{remark}
\newtheorem{remark}{\sc Remark}
\newtheorem{example}{\sc Example}
\newcommand{\Ob}{\mathrm{Ob}}
\newcommand{\categ}[1]{\mathsf{#1}}
\newcommand{\operad}[1]{\mathsf{#1}}
\newcommand{\catofcog}[1]{\operad{#1}\mathrm{-}\mathsf{cog}}
\newcommand{\catofalg}[1]{\operad{#1}\mathrm{-}\mathsf{alg}}
\newcommand{\Fun}[2]{\mathrm{Fun}\left(#1,#2\right)}
\newcommand{\Funinfty}[2]{\mathrm{Fun}_{\infty}\left(#1,#2\right)}
\newcommand{\id}{\mathrm{Id}}
\newcommand{\II}{\mathbb{1}}
\newcommand{\Set}{\mathsf{Set}}
\newcommand{\sSet}{\mathsf{sSet}}
\newcommand{\hammock}{\mathrm{L}_{W}^{\mathrm{H}}}
\newcommand{\op}{\mathrm{op}}
\newcommand{\itemt}{\item[$\triangleright$]}
\newcommand{\poubelle}[1]{}
\title{Localisation of cubical model categories}
\author{Brice Le Grignou}
\email{bricelegrignou "at" gmail.com}
\date{\today}
\begin{document}

\begin{abstract}
    In this article we introduce the notion of a square structure on a model category, that generalises cubical model categories. We then show that under some homotopical conditions on this square structure the induced cubical category is a localisation of the model category.
\end{abstract}
\maketitle

\setcounter{tocdepth}{1}
\tableofcontents

\section*{Introduction}

Given an $\infty$-category $\categ C$ and a set morphisms $W$, the localisation $\mathcal{L}_W \categ C$ of $\categ{C}$ with respect to $W$ is the $\infty$-category obtained from $\categ C$ by inverting (up to homotopy) the morphisms in $W$.
Localisation in a key topic in the theory of higher categories as any $\infty$-category (to be more precise any $(\infty,1)$-category) may be obtained as the localisation of a category with respect to a sub-category.

Given a model category $\categ M$, one can compute its localisation using for instance the hammock localisation (\cite{DwyerKan80b}). However, the hammock mapping spaces may be huge. Nonetheless, for any two objects $X,Y$ one can describe the mapping space from $X$ to $Y$ as the simplicial set
$$
\mathrm{Map}_{\categ M}(X,Y) = \hom_{\categ{M}}(QX, Y_-)
$$ where $QX$ is a cofibrant replacement of $X$ and the simplicial object $\Delta[n] \mapsto Y_n$ is a Reedy fibrant replacement of the constant simplicial object $Y$. However, one cannot in general compose these mapping spaces, unless for instance $\categ M$ has the structure of a simplicial model category, that is a simplicial enrichment that satisfies some coherence with respect to the model structure.

Unfortunately, simplicial model categories are rare in the context of Homological Algebra. Despite that, one often has a natural cylinder object or a natural path object. Suppose that our model category has a natural cylinder object $X \mapsto C(X)$. Taking, the cylinder of the cylinder, the cylinder of the cylinder of the cylinder, \ldots  yields a monoidal functor from the category $\square$ of cubes to the category of endofunctors of $\categ M$
\begin{align*}
\square  &\to \Fun{\categ M}{\categ M}
\\
\square[n] &\mapsto C^n .
\end{align*}
We then obtain a cubical enrichment on $\categ M$, whose mapping spaces are
$$
\categ M^C(X,Y)_n = \hom_{\categ M}(C^n X, Y) .
$$
The resulting cubical category $\categ M^C$ (at least its restriction to fibrant-cofibrant objects) is a relevant candidate to be the localisation of $\categ{M}$.

We define a left square structure $Q$ on $\categ M$ to be an oplax monoidal functor from $\square$ to $\Fun{\categ M}{\categ M}$. This induces a cubical enrichment on $\categ M$ as just above. Such a square structure is called homotopical if for any cofibration $f : X \to X'$ and any fibration $g: Y \to Y'$, the map
$$
\categ M^C(X',Y) \to \categ M^C(X,Y) \times_{\categ M^C(X,Y')} \categ M^C(X,Y')
$$
is a fibration that is acyclic if $f$ or $g$ is. Moreover, we require the map $Q_0 \to \id$ to be an objectwise weak equivalence and $Q_0(\emptyset)$ to be cofibrant. A natural cylinder object whose induced square structure is homotopical is called a coherent cylinder.

Square structures are encountered in many contexts. Indeed, any enrichment of $\categ M$ by some monoidal model category $\categ E$ that satisfies some coherence conditions induces an homotopical square
structure on $\categ M$.  For instance, if
$\categ M$ is a simplicial model category. In the context of homological algebra where the model categories involved fail to be simplicial model categories, there are still homotopical square structure. We shall describe in particular a left square structure $Q$ on algebras over some planar operad so that $Q_0$ is always cofibrant.

Our final result assert that an homotopical square structure gives a model of the localisation of $\categ M$.

\begin{theorem*}
Let $Q$ be an homotopical left square structure on a model category $\categ M$. Then the functor of cubical categories
$$
\categ M_{cf} \to \categ M_{cf}^Q
$$
is a localisation with respect to weak equivalences.
\end{theorem*}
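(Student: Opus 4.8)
The plan is to show that the cubical functor $\categ M_{cf} \to \categ M_{cf}^Q$ satisfies the universal property of the localisation at weak equivalences, which means verifying two things: first, that it sends weak equivalences to equivalences in the cubical category $\categ M_{cf}^Q$, and second, that it is initial (up to homotopy) among such functors, or equivalently that the induced map on mapping spaces after any localisation is an equivalence. The most efficient route is to reduce everything to a statement about mapping spaces. Namely, I would first pass from the cubical enrichment to a simplicial one via the triangulation (the functor $\square \to \sSet$ sending $\square[n]$ to $(\Delta[1])^{\times n}$, or its normalised analogue), so that $\categ M_{cf}^Q$ presents a simplicially enriched category whose mapping spaces are $\Map^Q(X,Y) = \categ M^C(X,Y)$ for fibrant-cofibrant $X, Y$. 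The homotopical hypothesis on $Q$ is exactly a pushout-product/pullback-hom (Reedy) condition, so it guarantees that these cubical mapping spaces are Kan complexes and that they have the correct homotopy type.

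The key comparison step is to identify $\categ M^C(X,Y)$ with the Dwyer–Kan mapping space $\Map_{\categ M}(X,Y)$ recalled in the introduction. Concretely, I would argue that for $X$ cofibrant and $Y$ fibrant the cubical object $n \mapsto \hom_{\categ M}(C^n X, Y)$ is a model for $\hom_{\categ M}(QX, Y_-)$. This uses two inputs: the condition that $Q_0 \to \id$ is an objectwise weak equivalence (so that the $0$-simplices compute the correct set of homotopy classes up to the right notion of homotopy) together with $Q_0(\emptyset)$ cofibrant, and the homotopical (Reedy fibrancy) condition, which forces each $C^n X$ to be a cylinder-like object so that $n \mapsto C^n X$ is a cosimplicial (cocubical) resolution of $X$ in the sense of Dwyer–Kan / Hirschhorn. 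Once one knows that $n \mapsto C^n X$ is a cosimplicial resolution of the cofibrant object $X$, the standard theory gives that $\hom_{\categ M}(C^\bullet X, Y)$ computes the homotopy function complex, hence is weakly equivalent to $\Map_{\categ M}(X,Y)$, naturally in $X$ and $Y$.

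With this identification in hand, the universal property follows from the general characterisation of the localisation of a model category by its homotopy function complexes. A functor out of $\categ M_{cf}$ is a localisation at $W$ precisely when it is essentially surjective up to the generated equivalences and induces weak equivalences on mapping spaces into the Dwyer–Kan simplicial localisation $L^H \categ M$. Since $\categ M_{cf}^Q$ has mapping spaces equivalent to $\Map_{\categ M}(X,Y)$ and the comparison functor is the identity on objects (hence trivially essentially surjective), I would invoke the characterisation of Dwyer–Kan (and its $\infty$-categorical reformulation, e.g.\ the recognition that a simplicially enriched functor which is the identity on objects and a weak equivalence on all mapping spaces exhibits the target as a localisation) to conclude. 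The functoriality of the weak equivalence $\categ M^C(X,Y) \simeq \Map_{\categ M}(X,Y)$ immediately shows that weak equivalences in $\categ M_{cf}$ become invertible in $\categ M_{cf}^Q$, since they already are in $L^H\categ M$.

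The main obstacle I expect is the compatibility of the comparison with \emph{composition}: the identification $\categ M^C(X,Y) \simeq \Map_{\categ M}(X,Y)$ is easy to produce objectwise, but one must check it is compatible with the composition coming from the oplax monoidal structure of $Q$ on one side and the composition in the Dwyer–Kan localisation on the other, at least up to coherent homotopy. Showing that $n \mapsto C^n X$ genuinely forms a cosimplicial resolution — rather than merely each level being a cylinder — will require carefully unwinding the pushout-product condition to verify the Reedy cofibrancy of the associated cosimplicial (or cocubical) object, and this is where the full strength of the homotopical hypothesis, together with the monoidal functoriality $C^{m+n} = C^m \circ C^n$, must be used.
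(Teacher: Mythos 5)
Your plan is recognisably close in spirit to the paper's proof at one point: the identification of $\hom_{\categ M}(Q_\bullet X, Y)$ with the Dwyer--Kan function complex, via the fact that $n \mapsto Q_n(X)$ is a Reedy cofibrant cocubical resolution of a cofibrant object $X$, is exactly what the paper establishes (Lemma \ref{lemmacubicalapprox}, applied to $Q_-(X)$ and proved by comparing $\square \downarrow \hom_{\categ M}(Q_- X, Y)$ with the span category $W^{-1}\categ M(X,Y)$ through the cofinality result of Proposition \ref{propfinal}). The Reedy-cofibrancy worry in your last paragraph is genuinely settled by the homotopical hypothesis together with the conditions that $Q_0 \to \id$ is an objectwise weak equivalence and $Q_0(\emptyset)$ is cofibrant, so that part of your plan goes through.

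The genuine gap is the step you yourself call ``the main obstacle'' and then leave unresolved; it is not a deferrable verification but the crux of the theorem. A natural, objectwise weak equivalence $\categ M^Q(X,Y) \simeq \hammock \categ M_{cf}(X,Y)$ does \emph{not} imply that $\categ M^Q_{cf}$ is a localisation: the recognition principle you invoke (identity on objects plus equivalences on all mapping spaces) presupposes a simplicial functor, or at least a zigzag of simplicial functors, between $\categ M^Q_{cf}$ and $\hammock\categ M_{cf}$ realising those equivalences, and no such functor is available --- the only functor you have is $\categ M_{cf} \to \categ M^Q_{cf}$, which is far from being an equivalence on mapping spaces. There is also no off-the-shelf theorem to cite here: the analogous statement for simplicial model categories is a theorem of Dwyer--Kan whose proof is precisely this kind of argument, and extending it to square structures is the content of the present paper. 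The paper is organised to sidestep the construction of a composition-compatible comparison: since $h(\rho)$ inverts weak equivalences, $\rho$ factors as $\categ M_{cf} \to \mathbb{L}_W \categ M_{cf} \xrightarrow{\xi} \categ M^Q_{cf}$ through the formal (pushout) localisation, and Theorem \ref{thmcondition} shows that $\xi$ is an equivalence of simplicial categories provided the canonical map
$$
\varinjlim^h_{Z \in (\categ M_c \downarrow_V X)^{\op}} \hom_{\categ M_{cf}}(Z,Y) \to \varinjlim^h_{Z \in (\categ M_c \downarrow_V X)^{\op}} \categ M^Q(Z,Y)
$$
is an equivalence. This map is induced by $\rho$ itself, so it exists for free and no coherence issue arises; the left-hand side is identified with the hammock mapping space (Lemma \ref{lemmacolimhammock}, resting on Theorem \ref{thmmappingspaceloc}), while the right-hand side is identified, via Thomason's theorem and the Grothendieck construction (Corollary \ref{corollaryhomotopycolimitcube}), with the nerve of $\int^t_{Z} \square \downarrow \hom_{\categ M}(Q_- Z, Y)$, which maps by an explicit functor to $W^{-1}\categ M(X,Y)$; a 2-out-of-3 argument then concludes. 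To complete your proof you would either have to reproduce an analogue of this homotopy-colimit detection argument, or prove that your objectwise equivalences assemble into a zigzag of simplicial functors compatible with composition --- a substantial claim that your proposal does not address.
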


\subsection*{Layout}

In the first section we recall some notions about cubical sets, homotopy colimits and infinity-categories. The second section introduces the notion of a square structure and describes some 2-categories of categories equipped with such a structure. The next section describes the notion of an homotopical square structure and coherent cylinder. In particular, we give conditions for a natural cylinder to be coherent. The final section proves the result that an homotopical square structure on a model category $\categ M$ provides a model of the localisation of $\categ M$.

\subsection*{Notations and conventions}

\begin{enumerate}
    \item Let $\mathcal{U} < \mathcal{V}$ be two universes. A set will be called small if it is $\mathcal{U}$-small and large if it is $\mathcal{V}$-small.
    \item The category of small categories is denoted $\categ{Cat}$.
    \item A category is called (co)complete if it has all small (co)limits.
    \item In this article, a model category is a (large, locally small) category $\categ M$ with finite limits and colimits equipped with wide subcategories $W, \mathrm{Cof}, \mathrm{Fib}$ that contains all isomorphisms and so that 
    \begin{enumerate}
        \item any morphism may be factored as a cofibration followed by an acyclic fibration (resp. an acyclic cofibration followed by a fibration);
        \item cofibrations are maps which have the left lifting property with respect to acyclic fibrations;
        \item acyclic cofibrations are maps which have the left lifting property with respect to fibrations;
        \item cofibrations, fibrations and weak equivalences are stable by retracts;
        \item weak equivalences satisfy the 2-out-of-3 rule : if among three maps $f,g, f \circ g$, two of them are weak equivalences, then they are all weak equivalences.
    \end{enumerate}
    \item We denote $\Set$ the category of sets and $\sSet$ the category of simplicial sets. The context will specify if it means small or large (simplicial) sets.
    \item We denote $\mathrm{Ex}^\infty$ Kan's fibrant replacement of simplicial sets.
    \item A functor $f$ between small categories is called an homotopy equivalence if the morphism of simplicial sets $N(f)$ is a Kan-Quillen equivalence.
    \item For any natural integer $n$, we denote $[n]$ the poset
    $$
    0 < 1 < \cdots < n .
    $$
    \item We will denote $\mathrm{Iso}$ the groupoid with two objects and that is equivalent to $\ast$. 
\end{enumerate}


\section{Recollections}

\subsection{Cubes and cubical sets}

\begin{definition}
Let $\square$ be the category of cubes, that is the subcategory of posets whose
\begin{itemize}
    \itemt objects are posets
    $$
    \{0<1\}^n, \ n\in \mathbb N;
    $$
    \itemt morphisms are compositions of products of the maps 
    \begin{align*}
    \delta^{i} &: \ast \xrightarrow{i} \{0<1\} ,\  i\in \{0,1\}
    \\
    \sigma &: \{0<1\} \to \ast .
    \end{align*}
\end{itemize}
Let $\square_{\leq 1}$ be the full subcategory of $\square$ spanned by the object $0$ and $1$.
\end{definition}

\begin{definition}
A cubical set is a presheaf on $\square$. The category of cubical sets is denoted $\square-\Set$. Moreover, the image through the Yoneda embedding of $\{0<1\}^n$ is denoted $\square[n]$ for any natural integer $n \in \mathbb N$.
\end{definition}

\begin{proposition}
The category of cubes $\square$ inherits from the product of posets the structure of a strict monoidal category
$$
\square[n] \otimes \square[n] = \square[n+m]
$$
so that, for any monoidal category $\categ C$, the composite functor
$$
\mathrm{Fun_{\otimes}}\left( \square, \categ C \right) \to \Fun{\square}{\categ C} \to \Fun{\square_{\leq 1}}{\categ C}
$$
from the category of monoidal functors from cubes to $\categ C$ to the category of functors from $\square_{\leq 1}$ to $\categ C$ is an equivalence.
\end{proposition}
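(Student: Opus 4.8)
The statement amounts to the assertion that $\square$ is the free strict monoidal category generated by the interval $\square[1]$ together with its two endpoints and its collapse map, and I would prove the equivalence by exhibiting this universal property directly. Note that, by the very definition of $\square$, every object is $\square[n] = \square[1]^{\otimes n}$ with $\square[0]$ the monoidal unit, and every morphism is a composite of $\otimes$-products of the generators $\delta^0,\delta^1 : \square[0] \to \square[1]$, $\sigma : \square[1] \to \square[0]$ and identities. The composite functor in the statement is precisely restriction of a monoidal functor along $\square_{\leq 1} \hookrightarrow \square$, and $\square_{\leq 1}$ records exactly the object $\square[1]$, the generators, and the relations among them, the only nontrivial ones being $\sigma\delta^0 = \sigma\delta^1 = \id_{\square[0]}$.

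By Mac Lane's coherence theorem I would first reduce to the case where $\categ C$ is strict monoidal, since $\mathrm{Fun}_{\otimes}(\square,-)$ sends monoidal equivalences to equivalences; a monoidal functor $G$ then sends $\square[0]$ to the unit and satisfies $G(\square[n]) = G(\square[1])^{\otimes n}$. To construct a quasi-inverse, given a functor $F : \square_{\leq 1} \to \categ C$ (which, to lie in the essential image, sends the unit object $\square[0]$ to the monoidal unit) I set $G(\square[n]) := F(\square[1])^{\otimes n}$ and $G(\delta^i) := F(\delta^i)$, $G(\sigma) := F(\sigma)$, extending to arbitrary morphisms by declaring $G$ to strictly preserve $\otimes$ and $\circ$. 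This $G$ restricts to $F$, and it is the unique monoidal functor doing so, because monoidality forces its value on every object and on every morphism once its values on $\square[1]$ and on the three generators are fixed; this already yields the uniqueness half of full faithfulness.

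For morphisms, a monoidal natural transformation $G \Rightarrow G'$ has component at $\square[n]$ equal to the $n$-fold tensor of its component at $\square[1]$ together with the identity at $\square[0]$, and conversely any natural transformation of the restrictions extends to such a family, naturality against all of $\square$ reducing to naturality against the generators by bifunctoriality of $\otimes$. Hence restriction is a bijection on hom-sets, which together with the construction above gives the equivalence.

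The one genuinely non-formal point, and the step I expect to be the main obstacle, is the well-definedness of $G$: I must verify that the assignment on generators respects every relation holding in $\square$. The plan is to show that all cubical identities among $\otimes$-$\circ$-words in $\delta^0,\delta^1,\sigma$ are consequences of (i) the local relations $\sigma\delta^i = \id_{\square[0]}$ and (ii) the interchange law $(a\circ c)\otimes(b\circ d) = (a\otimes b)\circ(c\otimes d)$, which holds automatically in any monoidal category. Concretely I would establish a normal form for morphisms of $\square$, each factoring as a product of collapses followed by a product of endpoint insertions modulo these relations, thereby identifying $\square$ with the strict monoidal category freely presented by $\square_{\leq 1}$. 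Once this normal form is in hand, well-definedness of $G$ is immediate, since $F$ respects the local relations and $\categ C$ respects interchange; this combinatorial identification of $\square$ with a free monoidal category is the heart of the argument.
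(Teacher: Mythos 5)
The paper itself states this proposition without proof, as a recollection (it is the classical presentation of the restricted cubical site, going back to Grandis--Mauri), so there is no argument in the paper to compare yours against; your proposal has to be judged on its own terms. Your overall route is the right one: identify $\square$ with the free strict monoidal category on an interval ($\delta^0,\delta^1,\sigma$ subject to $\sigma\delta^0=\sigma\delta^1=\id$), reduce to strict $\categ C$ by coherence, build the quasi-inverse on objects by $G(\square[n])=F(\square[1])^{\otimes n}$, and obtain well-definedness from a presentation/normal form for $\square$ (unique factorisation of every cube map as a tensor product of codegeneracies followed by a tensor product of cofaces, all relations being generated by the local ones and interchange). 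That normal form, which you correctly single out as the heart of the matter, is standard; you only announce it rather than prove it, but the plan for it is sound.

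The genuine problem is your claim that restriction is a bijection on hom-sets: it is false, and the failure is of exactly the same nature as the restriction you quietly slipped into a parenthesis at the object level. Work in $\categ C = \mathsf{Ab}$. On objects, the functor $\square_{\leq 1}\to\mathsf{Ab}$ sending both $\square[0]$ and $\square[1]$ to $0$ is not isomorphic to the restriction of any monoidal functor, since any such restriction sends $\square[0]$ to an object isomorphic to the unit $\mathbb{Z}$; so the composite functor of the statement is not essentially surjective, the proposition as literally written is false, and your parenthesis ``(which, to lie in the essential image, sends the unit object $\square[0]$ to the monoidal unit)'' is not an innocent remark but a change of statement. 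On morphisms, the same phenomenon kills fullness even after that change: let $G:\square\to\mathsf{Ab}$ be the strong monoidal functor $\square[n]\mapsto\mathbb{Z}[\{0,1\}^n]\cong(\mathbb{Z}^2)^{\otimes n}$ (free abelian groups on the underlying sets, so it exists without circularity). The zero natural transformation $G|_{\square_{\leq 1}}\Rightarrow G|_{\square_{\leq 1}}$ is a perfectly good natural transformation of the restrictions, but it is the restriction of no monoidal natural transformation $G\Rightarrow G$, because the unit axiom $\theta_{\square[0]}\circ\eta=\eta$ forces $\theta_{\square[0]}$ to be the identity --- as you yourself observe when you assert that the component at $\square[0]$ is the identity. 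So ``conversely any natural transformation of the restrictions extends to such a family'' is wrong, and restriction is not full. The statement you can actually prove (and the one the paper in effect uses later: natural cylinder objects are required to send $\square[0]$ to $\id_{\categ C}$) is that $\mathrm{Fun}_{\otimes}(\square,\categ C)$ is equivalent to the category of \emph{intervals} in $\categ C$, i.e.\ functors $\square_{\leq 1}\to\categ C$ sending $\square[0]$ to the monoidal unit, with morphisms those natural transformations whose component at $\square[0]$ is the identity. With the target corrected in this way, your construction of the quasi-inverse and your extension argument for transformations do go through, modulo actually establishing the normal form.
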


\begin{definition}
We will denote $\delta_k^i$ the map
$$
\square[n] = \square[k] \otimes \square[0] \otimes \square[n-k]
\xrightarrow{\id \otimes \delta^i \otimes \id}
\square[k] \otimes \square[1] \otimes \square[n-k]
= \square[n+1]
$$
and call it a coface. Similarly, we will denote $\sigma_k$ the map
$$
\square[n+1] = \square[k] \otimes \square[1] \otimes \square[n-k]
\xrightarrow{\id \otimes \sigma \otimes \id}
\square[k] \otimes \square[0] \otimes \square[n-k]
= \square[n] 
$$
and call it a codegeneracy.
\end{definition}

\begin{proposition}
The category of cubical sets $\square-\Set$ inherits from cubes through Day convolution the structure of a bilinear monoidal category
$$
\square[n] \otimes \square[n] = \square[n+m]
$$
so that, for any cocomplete bilinear monoidal category $\categ C$, the composite functor
$$
\mathrm{Fun_{\otimes, cc}}\left( \square-\Set, \categ C \right) \to \mathrm{Fun_{\otimes}}\left( \square-\Set, \categ C \right)
\to \mathrm{Fun_{\otimes}}\left( \square, \categ C \right)
$$
from the category of cocontinuous monoidal functors from cubical sets to $\categ C$ to the category of monoidal functors from cubes to $\categ C$ is an equivalence.
\end{proposition}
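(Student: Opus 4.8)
The plan is to recognise this proposition as the universal property of Day convolution, which exhibits $\square-\Set = \Psh(\square)$ as the free monoidal cocompletion of $\square$. First I would equip $\square-\Set$ with the Day convolution tensor product, given on presheaves $F,G$ by the coend
$$
(F\otimes G)(\square[n]) = \int^{\square[p],\square[q]} \square\big(\square[n],\square[p+q]\big)\times F(\square[p])\times G(\square[q]),
$$
with unit the representable $\square[0]$. Two facts follow at once from this formula. The Yoneda embedding $y\colon \square \to \square-\Set$ becomes strong monoidal, since by the co-Yoneda lemma $y(\square[p])\otimes y(\square[q]) \cong y(\square[p]\otimes\square[q]) = y(\square[p+q])$, which realises the monoidal structure $\square[n]\otimes\square[m]=\square[n+m]$ of the previous proposition. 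And the tensor product is bilinear, i.e. preserves colimits in each variable separately: coends are colimits, colimits of presheaves are computed objectwise, and cartesian product with a fixed set preserves colimits of sets. Together with the cocompleteness of $\square-\Set$ this establishes the first assertion.

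For the equivalence I would construct the pseudo-inverse by left Kan extension. Recall the classical (non-monoidal) universal property: for any cocomplete $\categ C$, restriction along $y$ is an equivalence between cocontinuous functors and all functors, $\mathrm{Fun}_{\mathrm{cc}}(\square-\Set,\categ C)\simeq \Fun{\square}{\categ C}$, with inverse $G\mapsto \tilde G := \mathrm{Lan}_y G$. This $\tilde G$ is the cocontinuous functor characterised by $\tilde G\circ y\cong G$ together with the colimit formula $\tilde G(F)\cong \colim_{y(\square[p])\to F} G(\square[p])$ over the category of elements of $F$. Since the restriction of a monoidal functor along the strong monoidal $y$ is again monoidal, it remains to promote the inverse to the monoidal level: given a monoidal functor $G\colon\square\to\categ C$, I must equip $\tilde G$ with a (strong) monoidal structure naturally in $G$.

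The mechanism is a density argument in two variables. Both $\tilde G(-\otimes -)$ and $\tilde G(-)\otimes\tilde G(-)$ are functors $\square-\Set\times\square-\Set\to\categ C$ preserving colimits in each variable separately: the first because Day convolution is bilinear and $\tilde G$ is cocontinuous, the second because $\categ C$ is bilinear and $\tilde G$ is cocontinuous. As every presheaf is canonically a colimit of representables, such a bifunctor $H$ is determined up to canonical isomorphism by its restriction to representables, through the double colimit
$$
H(F,F')\cong \colim_{y(\square[p])\to F}\ \colim_{y(\square[q])\to F'} H\big(y(\square[p]),y(\square[q])\big).
$$
On representables the two bifunctors agree naturally in $\square[p],\square[q]$, since $\tilde G(y(\square[p])\otimes y(\square[q]))\cong\tilde G(y(\square[p+q]))\cong G(\square[p+q])\cong G(\square[p])\otimes G(\square[q])\cong\tilde G(y(\square[p]))\otimes\tilde G(y(\square[q]))$, using strong monoidality of $y$, the property $\tilde G\circ y\cong G$, and the monoidal structure of $G$. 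Hence the two bifunctors are naturally isomorphic, and this isomorphism is the monoidal structure of $\tilde G$; the unit constraint comes similarly from $\tilde G(\square[0])\cong G(\square[0])\cong\unit_{\categ C}$.

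The main obstacle, and the only place demanding care, is the coherence bookkeeping: one must verify that this comparison isomorphism is natural, satisfies the associativity and unit axioms of a monoidal functor, and that $G\mapsto\tilde G$ is functorial and two-sided inverse to restriction. I would dispatch all of these by the same density principle: a diagram of functors cocontinuous in each of several variables commutes as soon as it commutes on tuples of representables. The associativity pentagon and unit triangles for $\tilde G$ then reduce to the corresponding diagrams for $G$, which hold by hypothesis, while the unit and counit of the equivalence are checked on representables, where $\tilde G\circ y\cong G$ is the defining property. This reduces the whole statement to the monoidal structure on $\square$ already recorded and the classical free-cocompletion equivalence.
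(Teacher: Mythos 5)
Your proof is correct. Note that the paper itself offers no proof of this proposition: it is stated as a known fact, namely the universal property of Day convolution exhibiting $\square-\Set$ as the free monoidal cocompletion of $\square$ (due to Day, with the two-variable density argument you use going back to Im--Kelly). Your write-up is exactly the standard argument the paper implicitly appeals to: Day convolution via the coend formula, strong monoidality of the Yoneda embedding by co-Yoneda, bilinearity from objectwise colimits, the pseudo-inverse by left Kan extension, and reduction of the strong monoidal structure and all coherence diagrams for $\tilde G$ to representables by separate cocontinuity in each variable --- so it fills in the details the paper omits rather than diverging from it.
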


\begin{definition}
For any natural integer $n \geq 1$ and, let $\partial \square[n]$ be the subobject of $\square[n]$ so that the elements of $(\partial \square[n])_m$ are the maps from $\square[m]$ to $\square[n]$ that factorises through one of the cofaces
$$
\delta_k^i : \square[n-1] \to \square[n],\ 0\leq  k \leq n-1, \ i \in \{0,1\} .
$$
For $n=0$, we have $\partial \square[0] = \emptyset$.
\end{definition}

\begin{definition}
For any natural integer $n \geq 1$, any $1 \leq k \leq n-1$ and $\epsilon \in \{0,1\}$, let $\sqcap^{k,\epsilon}[n]$ be the subobject of $\square[n]$ so that the elements of $(\sqcap^{k,\epsilon}[n])_m$ are the maps from $\square[m]$ to $\square[n]$ that factorises through one of the cofaces
$$
\delta_a^b : \square[n-1] \to \square[n].
$$
where $(a,b) \neq (k,\epsilon)$.
\end{definition}

\begin{definition}
Let $L_\Delta : \square-\Set \to \sSet$ be the colimit preserving functor that sends $\square[n]$ to $\Delta[1]^n$. It has a right adjoint denoted $R^\Delta$.
\end{definition}

\begin{definition}
For any cubical set $X$, let $\square \downarrow X$ be the transposed Grothendieck construction of $X$, that is the category whose objects are pairs $(\square[n], x)$ of an object $\square[n] \in \square$ and an element $x \in X_n$. A morphism from $(\square[n], x)$ to $(\square[m], y)$ is the data of a morphism of cubes $\phi : \square[n] \to \square[m]$ so that $\phi(y) = x$. This defines a functor
$$
\square \downarrow - : \square-\Set \to \categ{Cat}.
$$
\end{definition}

\subsection{Homotopy theory of cubical sets}

\begin{theorem}\cite[Theoreme 8.4]{Cisinski06}
The category of cubical sets admits a monoidal proper combinatorial model structure whose cofibrations are monomorphisms and whose weak equivalences are morphism $f$ so that $L_\Delta(f)$ is a weak equivalence of simplicial sets or equivalently $N(\square \downarrow f)$ is a weak equivalence of simplicial sets. Sets of generating cofibrations and generating acyclic cofibrations are respectively
$$
\{\partial \square[n] \to \square[n]| n \in \mathbb N\}, \quad \{\sqcap^{k,\epsilon}[n] \to \square[n]| n \in \mathbb N, \ 0 \leq k \leq n-1,\ \epsilon\in \{0,1\}\}.
$$
Moreover, the adjunction $L_\Delta \dashv R^\Delta$ is a Quillen equivalence.
\end{theorem}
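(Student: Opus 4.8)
The plan is to deduce this from Cisinski's general existence theorem for model structures on presheaf categories (\cite{Cisinski06}), instantiated on the category $\square$ of cubes. First I would equip $\square-\Set$ with the \emph{cubical cylinder} $X \mapsto \square[1] \otimes X$, together with its two endpoint inclusions (induced by $\delta^0,\delta^1$) and the projection (induced by $\sigma$); since the monoidal product is the Day convolution of the strict monoidal structure on $\square$, and $\square[1]$ is a separated interval with $\partial\square[1] = \square[0] \sqcup \square[0] \hookrightarrow \square[1]$ a monomorphism, this is an \emph{exact cylinder} in Cisinski's sense. I would then take the horn inclusions $\{\sqcap^{k,\epsilon}[n] \to \square[n]\}$ as generating anodyne extensions and let the class of anodyne extensions be the saturation generated by them together with the cylinder. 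Cisinski's machine then produces a model structure in which the cofibrations are exactly the monomorphisms, with generating cofibrations the boundary inclusions $\{\partial\square[n] \to \square[n]\}$ and generating acyclic cofibrations the chosen horns; since $\square-\Set$ is a presheaf topos, hence locally presentable, cofibrant generation upgrades this to a combinatorial model structure, and the two generating sets are as stated.

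The heart of the matter is the identification of the weak equivalences, and here I would invoke the theory of test categories. The claim that the (intrinsically defined) weak equivalences are the maps $f$ with $L_\Delta(f)$ a weak equivalence of simplicial sets amounts to saying that the Cisinski localiser attached to the above data is the one detected by realisation into spaces, i.e. that $\square$ is a strict test category. Concretely, I would show that $L_\Delta$ sends each boundary inclusion to a monomorphism and each generating horn $L_\Delta(\sqcap^{k,\epsilon}[n] \to \square[n])$ to an anodyne inclusion of prisms $\Delta[1]^n$, so that $L_\Delta$ is left Quillen; the coincidence of the two classes of equivalences then follows from the test-category input that $L_\Delta$ induces an equivalence of homotopy categories. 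The comparison with $N(\square \downarrow f)$ is the standard statement that the nerve of the category of elements computes the same homotopy type as the realisation: there is a natural weak equivalence $N(\square \downarrow X) \simeq L_\Delta(X)$ (a last-vertex / Bousfield--Kan argument), so the two descriptions of weak equivalences agree.

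For the remaining structural properties I would argue as follows. Monoidality (the pushout-product axiom for the geometric product) reduces, by cofibrant generation, to the generators: the pushout-product of two boundary inclusions is again a monomorphism, and the pushout-product of a boundary inclusion with a horn inclusion is an anodyne extension. These are finite combinatorial checks on cubes. Left properness is automatic since every cubical set is cofibrant ($\emptyset \to X$ is always a monomorphism), and right properness is part of Cisinski's output for this class of model structures. Finally, $L_\Delta$ is left Quillen by the generator check, and by the identification of the previous paragraph it both preserves and reflects weak equivalences; combined with the essential surjectivity of $\mathrm{Ho}(L_\Delta)$ — the test-category fact that every simplicial homotopy type is realised by a cubical set — this forces the derived adjunction $L_\Delta \dashv R^\Delta$ to be an equivalence.

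The step I expect to be the main obstacle is the second paragraph: proving that the intrinsically defined weak equivalences of the Cisinski model structure coincide with the geometric ones detected by $L_\Delta$, equivalently that $\square$ is a test category. This is precisely the deep input provided by \cite{Cisinski06}; everything else is either formal (the existence machine, properness, the Quillen-equivalence criterion) or a finite combinatorial verification on cubes (the generator checks for monoidality and for the anodyne realisation of horns).
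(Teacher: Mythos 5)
Your proposal is correct and matches the paper's treatment: the paper does not prove this statement at all but simply cites it as Th\'eor\`eme 8.4 of Cisinski's Ast\'erisque volume, and your outline (exact cylinder plus horn-generated anodyne extensions via Cisinski's existence machine, identification of the weak equivalences through the strict test category property of $\square$, generator checks for monoidality, and the realisation comparison $N(\square \downarrow X) \simeq L_\Delta(X)$) is a faithful reconstruction of exactly that reference's argument. You also correctly isolate the genuinely deep step — that the horn-generated localiser coincides with the one detected by $L_\Delta$ — as the part that must be deferred to the test-category theory of the cited source.
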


\begin{proposition}\cite[Proposition 17]{LeGrignou18}\label{propositioneqgrokan}
There exists a natural weak equivalence of simplicial sets from
$N(\square \downarrow X)$ to $L_\Delta (X)$.
\end{proposition}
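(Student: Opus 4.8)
The plan is to exhibit a single natural map $\ell_X \colon N(\square \downarrow X) \to L_\Delta(X)$ and then to prove it is a weak equivalence by reducing, through homotopy colimits, to the case of representables.

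First I would construct $\ell_X$ as a cubical ``last vertex'' map. Since $L_\Delta$ is the colimit-preserving extension of $\square[n] \mapsto \Delta[1]^n = N(\{0<1\}^n)$, a simplex of $L_\Delta(X)$ is represented by a pair $(x, \alpha)$ where $x \in X_k$ and $\alpha$ is a simplex of the nerve of the poset cube $\{0<1\}^k$, i.e.\ an increasing chain of its vertices. An $m$-simplex of $N(\square \downarrow X)$ is a chain $(\square[k_0],x_0) \to \cdots \to (\square[k_m],x_m)$ encoded by cube maps $\phi_i \colon \square[k_{i-1}] \to \square[k_i]$ with $\phi_i^\ast(x_i) = x_{i-1}$. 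I would send such a chain to the pair consisting of the final element $x_m \in X_{k_m}$ together with the chain of vertices $\phi_m \circ \cdots \circ \phi_{i+1}(\hat 1) \in \{0<1\}^{k_m}$, the images of the top vertices under the composite cube maps; these form an increasing chain because each $\phi_j$ is monotone and sends the top vertex below the top vertex of its target. Checking that $\ell_X$ respects faces and degeneracies and is natural in $X$ is then a direct verification.

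Next I would recall the density presentation $X \cong \colim_{(\square[n],x)\in\square\downarrow X} \square[n]$ and turn both sides of the statement into homotopy colimits over $\square \downarrow X$. Applying the colimit-preserving functor $L_\Delta$ gives $L_\Delta(X) \cong \colim_{\square\downarrow X} \Delta[1]^\bullet$; because $\square$ is a skeletal Reedy category and $L_\Delta$ carries each boundary inclusion $\partial\square[n]\to\square[n]$ to the monomorphism $\partial(\Delta[1]^n)\to\Delta[1]^n$, the skeletal filtration of $X$ presents $L_\Delta(X)$ as a transfinite composite of pushouts along cofibrations, so the canonical map $\mathrm{hocolim}_{\square\downarrow X}\Delta[1]^\bullet \to L_\Delta(X)$ is a weak equivalence. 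On the other hand $N(\square\downarrow X) = \mathrm{hocolim}_{\square\downarrow X} \ast$ is by definition the homotopy colimit of the constant diagram at the point, and since $\Delta[1]^n$ is contractible the objectwise weak equivalence $\Delta[1]^\bullet \xrightarrow{\sim}\ast$ induces a weak equivalence $\mathrm{hocolim}_{\square\downarrow X}\Delta[1]^\bullet \xrightarrow{\sim} N(\square\downarrow X)$. Thus both $N(\square\downarrow-)$ and $L_\Delta$ send the canonical presentation of $X$ to homotopy colimits of pointwise-contractible diagrams. To conclude I would evaluate $\ell$ on representables: the category of elements $\square\downarrow\square[n]$ has a terminal object $(\square[n],\mathrm{id})$, so its nerve is contractible, and $L_\Delta(\square[n])=\Delta[1]^n$ is contractible, whence $\ell_{\square[n]}$ is a map between contractible simplicial sets and is automatically a weak equivalence. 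As $\ell$ is natural and compatible with the homotopy-colimit presentations above, a natural transformation of homotopy-colimit-preserving functors that is a weak equivalence on every representable is a weak equivalence on every object, so $\ell_X$ is a natural weak equivalence for all $X$.

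The hard part will be the degeneracy bookkeeping in the previous paragraph: the category of elements $\square\downarrow X$ records all cubes of $X$, including degenerate ones, whereas the cell structure of $L_\Delta(X)$ and the skeletal filtration only involve nondegenerate cubes. Making the two homotopy-colimit computations literally agree therefore requires showing that passing to the full category of elements does not alter the homotopy type---equivalently that the two-sided bar construction over $\square\downarrow X$ is homotopy cofinal over its nondegenerate part. This is precisely the content of $\square$ being a strict test category in the sense of Cisinski, and it is the one step I expect to demand genuine care; the remaining manipulations are formal properties of homotopy colimits.
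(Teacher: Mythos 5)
First, a point of comparison: the paper does not prove this proposition at all --- it is imported by citation from \cite[Proposition 17]{LeGrignou18} --- so your argument can only be measured against the standard proofs in the literature, not against an in-paper one. Your construction of the cubical last-vertex map $\ell_X \colon N(\square\downarrow X)\to L_\Delta(X)$ is correct: the vertex chain $v_i=\phi_m\circ\cdots\circ\phi_{i+1}(\hat 1)$ is indeed increasing, and compatibility with faces (in particular $d_m$, which uses the coend relation $(\phi_m^\ast x_m,w)\sim(x_m,(\phi_m)_\ast w)$), with degeneracies, and with naturality in $X$ all check out. The genuine gap is in the second half. The decisive claim is that the canonical map $\varinjlim^{h}_{\square\downarrow X}\Delta[1]^{\bullet}\to\varinjlim_{\square\downarrow X}\Delta[1]^{\bullet}=L_\Delta(X)$ is a weak equivalence, and ``the skeletal filtration presents $L_\Delta(X)$ as a transfinite composite of pushouts along cofibrations, so hocolim agrees with colim'' is not a proof: having a cellular presentation of the colimit on the right does not by itself identify it with the Bousfield--Kan homotopy colimit over the \emph{full} category of elements on the left. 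What is actually needed is (i) the Eilenberg--Zilber (skeletal) structure of $\square$ --- every cube is uniquely a degeneracy of a nondegenerate cube, so that $\mathrm{sk}_{n}X$ is obtained from $\mathrm{sk}_{n-1}X$ by attaching nondegenerate $n$-cubes along $\partial\square[n]$ --- and (ii) homotopy-colimit bookkeeping over categories of elements: $\square\downarrow(-)$ preserves colimits and sends monomorphisms to sieve (Dwyer) inclusions, so Thomason's theorem turns the skeletal pushouts into homotopy pushouts of nerves, followed by an induction on skeleta and a sequential-colimit argument. You do flag this as ``the hard part'', which is honest, but you misdiagnose it: it is \emph{not} ``precisely the content of $\square$ being a strict test category''. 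Strictness of a test category concerns the compatibility of $N(\square\downarrow -)$ with binary products and plays no role here; the degeneracy bookkeeping is the EZ/skeletal property of $\square$ together with Thomason-type results, and it has to be carried out (or correctly cited), not merely named.

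Second, even with both identifications $N(\square\downarrow X)\simeq\varinjlim^{h}_{\square\downarrow X}\Delta[1]^{\bullet}\simeq L_\Delta(X)$ in hand, you only obtain a zigzag of natural weak equivalences; to conclude that your specific map $\ell_X$ is a weak equivalence you must show it is compatible, naturally up to homotopy, with the two comparison maps out of the bar construction. Those two maps are given by genuinely different formulas (one evaluates at the first object of a chain, the other collapses the cube coordinates and reads off the last object), and their homotopy-compatibility is a verification of the same calibre as the classical theorem that the simplicial last-vertex map is a weak equivalence --- it cannot be absorbed into ``$\ell$ is natural and compatible with the presentations''. Note, however, that every use of this proposition in the present paper (namely Corollary \ref{corollaryhomotopycolimitcube}, hence the final localisation theorem) only requires a \emph{chain} of natural weak equivalences; so the most economical repair is to drop $\ell_X$ altogether, carry out steps (i)--(ii) above carefully, and state the result as a natural zigzag through $\varinjlim^{h}_{\square\downarrow X}\Delta[1]^{\bullet}$.
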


\subsection{Homotopy colimits}

\begin{definition}
Given a diagram of simplicial sets $F :J \to \sSet$, its homotopy colimit $\varinjlim_J^h F$ is the geometric realisation of the simplicial object in simplicial sets
$$
n  \mapsto \coprod_{a_0 \to \cdots \to a_n \in N(J)_n} F(a_0) .
$$
\end{definition}

\begin{definition}
Given a diagram of cubical sets $F :J \to \sSet$, its homotopy colimit $\varinjlim_J^h F$ is the homotopy colimit of $F \circ L_\Delta$.
\end{definition}

\begin{definition}
Given a diagram of categories $F :J \to \categ{Cat}$, its Grothendieck construction is the category $\int_{x \in J} F(x)$ whose objects are pairs $(x,y)$ where $x \in J$ and $y \in F(x)$. Moreover, a morphism from $(x,y)$ to $(x', y')$ is the data of a morphism $f : x \to x'$ in $J$ and a morphism $F(f)(y) \to y'$ in $F(x')$. There is a canonical functor $\int_{x \in J} F(x) \to J$.
\end{definition}

\begin{definition}
Given a diagram of categories $F :J \to \categ{Cat}$, its transposed Grothendieck construction
$$
\int_{x \in J}^t F(x) = (\int_{x\in J} F(x)^\op)^\op .
$$
There is a canonical functor $\int_{x \in J}^t F(x) \to J^\op$.
\end{definition}

\begin{lemma}
By definition, we have for any cubical set $X$
$$
\square \downarrow X = \int^t_{n \in \square^\op} X_n .
$$
\end{lemma}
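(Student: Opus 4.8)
The plan is to unwind both sides of the claimed identity and check that they describe the same category on the nose; since the statement is flagged as holding ``by definition'', the entire content is a careful bookkeeping of variances. Starting from the right-hand side, the definition of the transposed Grothendieck construction gives $\int^t_{n \in \square^\op} X_n = \left(\int_{n\in \square^{\op}} X_n^{\op}\right)^{\op}$. Because a cubical set takes values in $\Set$, viewed inside $\categ{Cat}$ as discrete categories, each $X_n$ satisfies $X_n^\op = X_n$, so the outer expression simplifies to $\left(\int_{n\in \square^{\op}} X_n\right)^{\op}$.

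Next I would spell out the inner Grothendieck construction of the presheaf $X : \square^\op \to \Set$. Its objects are pairs $(\square[n], a)$ with $a \in X_n$, which already match the objects of $\square \downarrow X$. A morphism $(\square[n],a) \to (\square[m], b)$ consists of a morphism $f$ of $\square^\op$ from $\square[n]$ to $\square[m]$, equivalently a cube map $\psi : \square[m] \to \square[n]$, together with a morphism $X(\psi)(a) \to b$ in the discrete category $X_m$; discreteness of the fibres collapses this last datum to the plain equation $X(\psi)(a) = b$, i.e.\ $\psi(a) = b$ in the abbreviated notation of the definition of $\square \downarrow X$.

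Passing to the opposite category reverses each such arrow, so in $\left(\int_{n\in \square^{\op}} X_n\right)^{\op}$ a morphism from $(\square[n], x)$ to $(\square[m], y)$ is precisely a cube map $\phi : \square[n] \to \square[m]$ with $X(\phi)(y) = x$, that is $\phi(y) = x$. This is exactly the data of a morphism in $\square \downarrow X$, with the objects, identities and composition manifestly agreeing, so the two categories coincide. I do not expect any genuine obstacle here: the only point requiring attention is that a cubical set is \emph{contra}variant on $\square$, so forming the Grothendieck construction over $\square^\op$ flips the direction once and the transpose flips it a second time; tracking that these two flips combine to reproduce the single contravariant condition $\phi(y) = x$ defining $\square \downarrow X$ is the whole of the verification.
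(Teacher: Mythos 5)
Your proof is correct and matches the paper's, which simply declares the identity ``Clear'': both amount to unwinding the definition of the transposed Grothendieck construction over $\square^\op$, using that the discrete fibres $X_n$ satisfy $X_n^\op = X_n$, and checking that the two opposite-category flips reproduce the contravariant condition $\phi(y) = x$. Your version just records explicitly the bookkeeping the paper leaves implicit.
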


\begin{proof}
Clear.
\end{proof}

\begin{theorem}\cite{thomason1979}\label{thmthomason}
Given a diagram of categories $F :J \to \categ{Cat}$, there is a natural equivalence that relates $\varinjlim_J^h NF$ to $N(\int_J F)$.
\end{theorem}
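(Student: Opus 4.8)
The plan is to prove the statement by comparing $\gamma \colon \categ M_{cf} \to \categ M_{cf}^Q$ with the localisation $\mathcal L_W \categ M_{cf}$ as modelled by the hammock localisation $\hammock(\categ M_{cf})$. First I would check that $\gamma$ inverts weak equivalences; the universal property of the localisation then produces a canonical comparison functor $\Phi \colon \mathcal L_W \categ M_{cf} \to \categ M_{cf}^Q$ lying over $\categ M_{cf}$, and the theorem is equivalent to the assertion that $\Phi$ is an equivalence of $\infty$-categories. Since $\gamma$, and hence $\Phi$, is the identity on objects, essential surjectivity is immediate, so the whole content is the full faithfulness of $\Phi$, that is the comparison of mapping spaces $\Map_{\mathcal L_W \categ M_{cf}}(X,Y) \to \Map_{\categ M_{cf}^Q}(X,Y)$ for fibrant–cofibrant $X,Y$.

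Before that I must explain how $\categ M_{cf}^Q$ presents an $\infty$-category and check that $\gamma$ inverts $W$. The homotopy condition makes $\categ M^Q(-,-)$ a homotopically well-behaved cubical enrichment on fibrant–cofibrant objects: its mapping cubical sets have the fibrancy needed for $L_\Delta \categ M^Q(X,Y)$ to model the mapping space, and a weak equivalence between fibrant–cofibrant objects—being a homotopy equivalence—induces, via the acyclicity clause of the homotopy condition and a standard SM7-style argument, weak equivalences of the cubical sets $\categ M^Q(Z,-)$ and $\categ M^Q(-,Z)$. Since cofibrations are the monomorphisms, every cubical set is cofibrant, so $L_\Delta$ preserves these weak equivalences; hence $\gamma$ inverts $W$ and the comparison functor $\Phi$ exists.

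The crux is the mapping-space comparison. On the localisation side, because $X$ is cofibrant and $Y$ fibrant, $\Map_{\mathcal L_W \categ M_{cf}}(X,Y)$ is weakly equivalent to the derived mapping space $\Map_{\categ M}(X,Y)$ of the introduction, which may equally be computed by a cosimplicial frame (resolution) of $X$. I would show that the iterated cylinders $n \mapsto C^n X = Q(\square[n])X$ assemble into a \emph{cocubical frame} on $X$: the homotopy condition supplies the requisite Reedy (co)fibrancy of the latching maps, while $Q_0 \to \id$ being an objectwise weak equivalence with $Q_0(\emptyset)$ cofibrant anchors the frame at $X$. It then remains to identify $L_\Delta \categ M^Q(X,Y)$ with this derived mapping space, and here the recollections do the work: by Proposition \ref{propositioneqgrokan}, $L_\Delta \categ M^Q(X,Y) \simeq N(\square \downarrow \categ M^Q(X,Y))$, and $\square \downarrow \categ M^Q(X,Y) = \int^t_{n \in \square^\op} \hom_{\categ M}(C^n X, Y)$ is the category of cubical cylinder diagrams from $X$ to $Y$; Theorem \ref{thmthomason} then presents its nerve as $\colim^h_{\square^\op} \hom_{\categ M}(C^\bullet X, Y)$, a cubical bar/realisation. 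Comparing this cubical realisation with the cosimplicial-frame presentation of $\Map_{\categ M}(X,Y)$—using that $L_\Delta$ carries $\square[n]$ to $\Delta[1]^n$ and is a Quillen equivalence, so that the cubical and simplicial frames of $X$ present the same homotopy type—yields the required equivalence, and hence the full faithfulness of $\Phi$.

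The hard part will be exactly this last identification: showing that the cocubical frame $C^\bullet X$ computes the correct derived mapping space and that $L_\Delta$ intertwines the cubical realisation of $\hom_{\categ M}(C^\bullet X, Y)$ with a cosimplicial-frame realisation. The homotopy condition is what will guarantee that the relevant bar construction is homotopy invariant, so that $\colim^h_{\square^\op} \hom_{\categ M}(C^\bullet X, Y)$ genuinely computes the derived hom rather than a naive quotient; and the Quillen equivalence $L_\Delta \dashv R^\Delta$ together with Proposition \ref{propositioneqgrokan} will provide the passage between the cubical and simplicial worlds. Care is needed because $L_\Delta$ is only laxly monoidal, so the cubical frame cannot be matched with a simplicial frame by a strict isomorphism; the comparison must instead be carried out at the level of homotopy types, transporting the cocubical frame $C^\bullet X$ across the Quillen equivalence to a cosimplicial frame presenting the same derived mapping space.
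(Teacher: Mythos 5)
Your proposal does not address the statement it was meant to prove. The statement is Thomason's theorem: for a diagram of categories $F : J \to \categ{Cat}$, the homotopy colimit $\varinjlim_J^h NF$ is naturally equivalent to the nerve $N(\int_J F)$ of the Grothendieck construction. This is a purely simplicial--categorical assertion, involving no model category, no square structure, and no localisation. What you have written is instead a proof strategy for the paper's \emph{final} theorem, namely that $\categ M_{cf} \to \categ M_{cf}^Q$ is a localisation with respect to weak equivalences. Worse, your third paragraph explicitly invokes Theorem \ref{thmthomason} as an ingredient (``Theorem \ref{thmthomason} then presents its nerve as $\colim^h_{\square^\op} \hom_{\categ M}(C^\bullet X, Y)$\dots''), so even read charitably the text is circular as a proof of the theorem in question: you are using the statement to prove the statement.

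For the record, the paper gives no proof of its own here --- it cites \cite{thomason1979} --- so the benchmark is Thomason's argument. A correct proof has to compare the simplicial replacement whose realisation defines $\varinjlim_J^h NF$, i.e.\ the bisimplicial set with $(n,m)$-simplices $\coprod_{a_0 \to \cdots \to a_n \in N(J)_n} N(F(a_0))_m$, with $N(\int_J F)$: one writes down the natural comparison map (a ``last vertex''-type map from the diagonal of this bisimplicial set to the nerve of the Grothendieck construction) and proves it is a weak equivalence, e.g.\ by Thomason's original reduction using the two-sided bar construction and Quillen's Theorem A, or by a levelwise analysis of the bisimplicial set. None of the machinery you deploy --- homotopical square structures, the hammock localisation, cocubical frames, the Quillen equivalence $L_\Delta \dashv R^\Delta$ --- bears on this; if you wish to salvage your text, it belongs to the proof of the paper's final localisation theorem (where it is broadly in the spirit of the paper's actual argument via Theorem \ref{thmcondition} and Lemma \ref{lemmacubicalapprox}), not to the proof of Theorem \ref{thmthomason}.
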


\begin{lemma}\label{lemmatw}
For any category $\categ C$, if $\mathrm{Tw}(\categ C)$ is the twisted arrows category of $\categ C$, then the two following functors
$$
\categ C^\op \leftarrow \mathrm{Tw}(\categ C) \to \categ C
$$
are homotopy equivalences.
\end{lemma}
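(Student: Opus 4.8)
The plan is to treat the two projections separately and to recognise each as the canonical projection of a Grothendieck construction with contractible fibres, after which Thomason's theorem (Theorem \ref{thmthomason}) reduces the claim to the homotopy invariance of homotopy colimits. I will spell out the left-hand projection $\mathrm{Tw}(\categ C)\to\categ C^\op$ in detail and obtain the right-hand one by duality.

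First I would exhibit $\mathrm{Tw}(\categ C)$ as a Grothendieck construction. Let $F\colon\categ C^\op\to\categ{Cat}$ send an object $a$ to the coslice category $a\downarrow\categ C$ and a morphism to the functor given by precomposition. A direct inspection of the definitions gives an isomorphism
$$\int_{a\in\categ C^\op} F(a)\;\cong\;\mathrm{Tw}(\categ C)$$
under which the canonical functor $\int_{a\in\categ C^\op}F(a)\to\categ C^\op$ becomes the left-hand projection. The key point is that each fibre $F(a)=a\downarrow\categ C$ has an initial object, namely $\mathrm{id}_a$, so that its nerve $NF(a)$ is contractible.

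Then I would apply Thomason's theorem to get a natural equivalence $N(\mathrm{Tw}(\categ C))\simeq\varinjlim^h_{\categ C^\op} NF$. The unique natural transformation $F\to\ast$ to the terminal diagram induces, by naturality of Thomason's equivalence, a commuting comparison between $N(\mathrm{Tw}(\categ C))\to N\categ C^\op$ and the map $\varinjlim^h_{\categ C^\op}NF\to\varinjlim^h_{\categ C^\op}\ast=N\categ C^\op$; moreover the first of these is exactly the nerve of the left-hand projection. Since every $NF(a)\to\ast$ is a weak equivalence and homotopy colimits of simplicial sets preserve objectwise weak equivalences, this comparison map is a weak equivalence, so the left-hand projection is a homotopy equivalence. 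For the right-hand projection $\mathrm{Tw}(\categ C)\to\categ C$, I would apply the same argument to $\categ C^\op$ in place of $\categ C$ and transport it along the canonical isomorphism $\mathrm{Tw}(\categ C^\op)\cong\mathrm{Tw}(\categ C)$ that swaps source and target, under which the projection $\mathrm{Tw}(\categ C^\op)\to(\categ C^\op)^\op=\categ C$ becomes the right-hand projection.

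The conceptual input — that the slices $a\downarrow\categ C$ and $\categ C\downarrow b$ are contractible — is immediate. I expect the main obstacle to be the bookkeeping of variances: one must check that the Grothendieck projection really matches the intended twisted-arrow projection (the source variable is covariant and the target variable contravariant, which is precisely what forces the base to be $\categ C^\op$), and that the comparison map produced by Thomason's theorem is genuinely the nerve of that projection rather than merely abstractly equivalent to it. Once the naturality in Theorem \ref{thmthomason} is pinned down, the rest is formal.
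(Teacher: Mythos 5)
Your proof is correct, and it takes a genuinely different route from the paper's. The paper disposes of the lemma in one line, asserting that the two projections admit adjoints; you instead identify $\mathrm{Tw}(\categ C)$ with the Grothendieck construction $\int_{a \in \categ C^\op} (a \downarrow \categ C)$, observe that the fibres have initial objects and hence contractible nerves, and conclude via Theorem \ref{thmthomason} together with the homotopy invariance of homotopy colimits --- a Quillen Theorem A style argument. Your variance bookkeeping does work out: with morphisms $(f : a \to b) \to (f' : a' \to b')$ in $\mathrm{Tw}(\categ C)$ given by pairs $p : a' \to a$, $q : b \to b'$ with $f' = qfp$, the Grothendieck construction of the coslice diagram over $\categ C^\op$ is literally $\mathrm{Tw}(\categ C)$ with its canonical projection, the identification $\int_{a \in \categ C^\op} \ast = \categ C^\op$ matches the Thomason comparison map with the nerve of that projection, and the second projection follows by duality exactly as you say.

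What is worth emphasising is that your longer argument is not merely an alternative: the paper's one-line justification fails in general. Take $\categ C$ to be the category with two objects $a, b$ and two parallel arrows $f, g : a \to b$. Then $\mathrm{Tw}(\categ C)$ is the four-object category with non-identity arrows $\mathrm{id}_a \to f \leftarrow \mathrm{id}_b$ and $\mathrm{id}_a \to g \leftarrow \mathrm{id}_b$, and the projection to $\categ C^\op$ has neither a left nor a right adjoint: a left adjoint $l$ would satisfy $\hom_{\mathrm{Tw}(\categ C)}(l(b), \mathrm{id}_a) \cong \hom_{\categ C}(a,b)$, forcing $l(b)$ to admit two distinct morphisms to $\mathrm{id}_a$, while a right adjoint $r$ would force $r(a)$ to receive two distinct morphisms from $\mathrm{id}_b$; no object of $\mathrm{Tw}(\categ C)$ has either property. (Adjoints do exist in special cases, e.g.\ when $\categ C$ has a terminal or initial object, which may be what the paper had in mind.) The lemma itself is nevertheless true --- in this example both nerves are circles --- and your proof establishes it in full generality. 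So the paper's approach buys brevity but has a genuine gap, whereas yours is the standard, robust argument, at the modest cost of pinning down the naturality in Theorem \ref{thmthomason}.
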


\begin{proof}
These functors have adjoints.
\end{proof}

\begin{corollary}\label{corgro}
Given a diagram of categories $F :J \to \categ{Cat}$, there is a natural chain of homotopy equivalences relating its Grothendieck construction to its transposed Grothendieck construction.
\end{corollary}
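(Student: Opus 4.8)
The plan is to combine Thomason's theorem (Theorem \ref{thmthomason}) with the twisted arrow comparison of Lemma \ref{lemmatw}, exploiting the fact that by definition $\int^t_{x\in J} F(x) = (\int_{x\in J} F(x)^\op)^\op$ is built out of two opposite-category operations: one applied to the fibres and one to the total category. Write $\mathrm{Tw} : \categ{Cat} \to \categ{Cat}$ for the twisted arrows functor; by Lemma \ref{lemmatw} its two projections assemble into natural transformations $\mathrm{Tw} \Rightarrow \id$ and $\mathrm{Tw} \Rightarrow (-)^{\op}$ whose components are homotopy equivalences. The idea is to dispose of each opposite separately.

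First I would treat the fibrewise opposite. Post-composing $F$ with the above zig-zag produces a zig-zag of diagrams $J \to \categ{Cat}$,
$$ F \longleftarrow \mathrm{Tw}\circ F \longrightarrow F^{\op}, $$
where $F^{\op}$ denotes the diagram $x \mapsto F(x)^{\op}$ and each component is an objectwise homotopy equivalence. Applying the Grothendieck construction $\int_J(-)$ and then nerves, Theorem \ref{thmthomason} identifies $N\int_J(-)$ with the homotopy colimit $\varinjlim_J^h N(-)$ naturally in the diagram; since homotopy colimits send objectwise weak equivalences to weak equivalences, I obtain a natural zig-zag of homotopy equivalences
$$ \int_J F \longleftarrow \int_J (\mathrm{Tw}\circ F) \longrightarrow \int_J F^{\op} . $$

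Next I would treat the outer opposite. Applying Lemma \ref{lemmatw} directly to the single category $\int_J F^{\op}$ yields a natural zig-zag of homotopy equivalences
$$ \Big(\int_J F^{\op}\Big)^{\op} \longleftarrow \mathrm{Tw}\Big(\int_J F^{\op}\Big) \longrightarrow \int_J F^{\op} . $$
Since $\int^t_J F = (\int_J F^{\op})^{\op}$, splicing this with the previous zig-zag produces the required natural chain of homotopy equivalences relating $\int_J F$ to $\int^t_J F$. Naturality in $F$ is automatic, since $\mathrm{Tw}$, $(-)^{\op}$, $\int_J(-)$ and the Thomason comparison are all natural.

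The only genuinely delicate point is the homotopy invariance invoked in the second paragraph: that $\int_J(-)$, equivalently $\varinjlim_J^h N(-)$, carries an objectwise homotopy equivalence of diagrams to a weak equivalence. I expect this to be the main thing to justify carefully, and it follows from the shape of the simplicial replacement computing the homotopy colimit, whose value at $n$ is the coproduct $\coprod_{a_0 \to \cdots \to a_n} N F(a_0)$: an objectwise weak equivalence of diagrams induces a levelwise weak equivalence of these (Reedy cofibrant) bisimplicial sets, whose diagonal geometric realisation is then a weak equivalence. Everything else reduces to assembling the naturality of the twisted arrow projections into transformations of diagrams, which is routine.
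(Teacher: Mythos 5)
Your proof is correct and takes essentially the same route as the paper's: a fibrewise twisted-arrow zig-zag relating $F$ to $F^\op$, homotopy invariance of $\int_J (-)$ on nerves (via Theorem \ref{thmthomason} and the simplicial replacement), and a second application of Lemma \ref{lemmatw} to the total category $\int_J F^\op$. The paper's proof is simply a terser statement of this exact argument, leaving implicit the homotopy-invariance justification that you spell out.
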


\begin{proof}
By Lemma \ref{lemmatw}, there is a span of objectwise homotopy equivalences relating $F$ to $F^\op$. This gives a span of homotopy equivalences relating $\int_{J} F$ to $\int_J F^\op$. Moreover, there is a span of homotopy equivalences relating $\int_{J} F^\op$ to $(\int_J F^\op)^\op$.
\end{proof}

\begin{corollary}\label{corollaryhomotopycolimit}
Given a diagram of categories $F :J \to \categ{Cat}$, there is a chain of natural equivalences that relates $\varinjlim_J^h NF$ to $N(\int_J^t F)$.
\end{corollary}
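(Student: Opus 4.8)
The plan is to concatenate the two results established immediately above: Thomason's theorem (Theorem \ref{thmthomason}) for the honest Grothendieck construction, and Corollary \ref{corgro} comparing the Grothendieck construction with its transposed variant. First I would invoke Theorem \ref{thmthomason} to obtain a natural equivalence relating $\varinjlim_J^h NF$ to $N(\int_J F)$. It then only remains to bridge $N(\int_J F)$ and $N(\int_J^t F)$.

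For this bridge, I would apply the nerve functor to the chain of homotopy equivalences produced by Corollary \ref{corgro}. Recall that, by the convention fixed in the introduction, a functor between small categories is a homotopy equivalence precisely when its nerve is a Kan--Quillen weak equivalence. Hence the chain of homotopy equivalences relating $\int_J F$ to $\int_J^t F$ becomes, upon applying $N$, a chain of weak equivalences of simplicial sets relating $N(\int_J F)$ to $N(\int_J^t F)$. Concatenating this with the equivalence of the previous paragraph yields the desired chain relating $\varinjlim_J^h NF$ to $N(\int_J^t F)$, so that no further homotopical input is required beyond what Theorem \ref{thmthomason} and Corollary \ref{corgro} already supply.

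The point that deserves care is that every step is natural in the diagram $F$. Thomason's equivalence is natural in $F$ by construction; the comparison of Corollary \ref{corgro} is assembled from the objectwise homotopy equivalences of Lemma \ref{lemmatw} applied to the twisted-arrow categories, together with the canonical $(-)^\op$ identification, all of which are natural in $F$; and the nerve functor preserves this naturality. I therefore expect the only (minor) obstacle to be the bookkeeping of the directions of the several spans, so that they genuinely compose into a single zig-zag of \emph{natural} weak equivalences rather than a collection of unrelated comparisons.
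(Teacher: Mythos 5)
Your proposal is correct and follows exactly the paper's own argument: the paper proves this corollary precisely by combining Theorem \ref{thmthomason} with Corollary \ref{corgro}, and your elaboration (applying the nerve to the zig-zag of homotopy equivalences and concatenating with Thomason's equivalence) is just an unpacking of that one-line proof. No discrepancy to report.
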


\begin{proof}
This is a direct consequence of Theorem \ref{thmthomason}
and Corollary \ref{corgro}
\end{proof}

\begin{corollary}\label{corollaryhomotopycolimitcube}
Given a diagram of cubical sets, $F :J \to \square-\Set$, there is a chain of natural equivalences that relates $\varinjlim_J^h L_{\Delta} F$ to $N(\int^t_{x \in J} \square \downarrow F(x))$.
\end{corollary}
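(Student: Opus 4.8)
The plan is to reduce the statement to Corollary \ref{corollaryhomotopycolimit} by transporting the cubical diagram $L_\Delta F$ across the natural weak equivalence supplied by Proposition \ref{propositioneqgrokan}. Write $G \colon J \to \categ{Cat}$ for the diagram $x \mapsto \square\text{-}\downarrow F(x)$ obtained by postcomposing $F$ with the functor $\square \downarrow - \colon \square\text{-}\Set \to \categ{Cat}$, so that $NG$ is the $\sSet$-valued diagram $x \mapsto N(\square \downarrow F(x))$. Note that $\varinjlim_J^h L_\Delta F$ is by definition the homotopy colimit of the simplicial-set diagram $x \mapsto L_\Delta F(x)$, so both sides of the comparison we are after are accessible through the bar-construction homotopy colimit of $\sSet$-valued diagrams.

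First I would invoke the naturality in $X$ of Proposition \ref{propositioneqgrokan}: applying it to $X = F(x)$ and letting $x$ vary produces a natural transformation $NG \Rightarrow L_\Delta F$ of functors $J \to \sSet$ which is an objectwise weak equivalence. Second, I would use that the homotopy colimit as defined (the geometric realisation of the simplicial replacement) is invariant under objectwise weak equivalences: an objectwise weak equivalence induces a levelwise weak equivalence between the simplicial objects $n \mapsto \coprod_{a_0 \to \cdots \to a_n \in N(J)_n} (-)(a_0)$, since a coproduct of weak equivalences of simplicial sets is again a weak equivalence, and the geometric realisation of a levelwise weak equivalence of bisimplicial sets is a weak equivalence. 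Hence the induced map $\varinjlim_J^h NG \to \varinjlim_J^h L_\Delta F$ is a weak equivalence.

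Finally, I would apply Corollary \ref{corollaryhomotopycolimit} to the $\categ{Cat}$-valued diagram $G$, which furnishes a chain of natural equivalences relating $\varinjlim_J^h NG$ to $N(\int_J^t G)$. Since $\int_J^t G = \int^t_{x \in J} \square \downarrow F(x)$ by the very definition of $G$, concatenating this chain with the weak equivalence of the previous paragraph yields the asserted chain of natural equivalences between $\varinjlim_J^h L_\Delta F$ and $N(\int^t_{x \in J} \square \downarrow F(x))$.

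The only step requiring genuine care is the homotopy invariance of the homotopy colimit under objectwise weak equivalences, equivalently the passage from a levelwise weak equivalence of bisimplicial sets to a weak equivalence on realisations; this is standard, and everything else is a purely formal concatenation of the already-established Proposition \ref{propositioneqgrokan} and Corollary \ref{corollaryhomotopycolimit}.
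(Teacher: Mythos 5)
Your proposal is correct and takes essentially the same route as the paper, whose proof is a one-line citation of Corollary \ref{corollaryhomotopycolimit} and Proposition \ref{propositioneqgrokan}. You simply make explicit the two steps the paper leaves implicit (naturality of the comparison $N(\square \downarrow F(x)) \to L_\Delta F(x)$ in $x$, and invariance of the bar-construction homotopy colimit under objectwise weak equivalences), both of which are sound.
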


\begin{proof}
This is a consequence of Corollary \ref{corollaryhomotopycolimit} and Proposition \ref{propositioneqgrokan}
\end{proof}

\subsection{Simplicial categories}

\begin{definition}
As in \cite{Lurie17}, we denote $\mathfrak C \dashv N$ the adjunction between simplicial sets and simplicial categories whose left adjoint $\mathfrak C$ sends the simplicial set $\Delta[n]$ to the Boardman--Vogt resolution of $[n]$.
\end{definition}

\begin{definition}
Given a simplicial category $\categ C$, we denote $h(\categ C)$ the category obtained by taking the $\pi_0$ of the mapping spaces. This defines a functor from simplicial categories to  categories that is left adjoint to the inclusion. Moreover, given a quasi-category $\categ C$, we will denote $h(\categ C)$ for $h \mathfrak C (\categ C)$.
\end{definition}

\begin{definition}
Given a simplicial category, we denote $\mathcal N(\categ C)$ the quasi-category obtained as the simplicial nerve of a fibrant replacement of $\categ C$:
$$
\mathcal N(\categ C) = N(\mathrm{Ex}^\infty(\categ C)) .
$$
\end{definition}

\begin{definition}
Let $\categ C$ be a cubical category. Then, we denote $L_\Delta(\categ C)$ the simplicial category obtained by taking the image through $L_\Delta$ of each mapping space. Then, we will denote respectively $h(\categ C)$ and $\mathcal{N}(\categ C)$ instead of $hL_\Delta(\categ C)$ and $\mathcal{N}L_\Delta(\categ C)$.
\end{definition}

\begin{theorem}\cite{Bergner07}
There exists a left proper combinatorial model structure on simplicial categories whose
\begin{itemize}
    \itemt weak equivalences are morphisms $f : C \to D$ so that $h(f)$ is an equivalence of categories and for any objects $x,y$, the map $C(x,y) \to D(f(x), f(y))$ is a weak equivalence of simplicial sets (for the Kan-Quillen model structure);
    \itemt fibrations are morphisms $f : C \to D$ so that $h(f)$ is an isofibration of categories and for any objects $x,y$, the map $C(x,y) \to D(f(x), f(y))$ is a Kan-Quillen fibration.
\end{itemize}
\end{theorem}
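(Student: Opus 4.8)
The plan is to produce the structure from a recognition theorem for cofibrantly generated model categories (Kan's recognition principle, in the form used by Bergner), by exhibiting explicit sets $I$ of generating cofibrations and $J$ of generating acyclic cofibrations, and then to check combinatoriality and left properness separately. The underlying category of small simplicial categories is locally presentable, so the domains of $I$ and $J$ will automatically be small and the adjective ``combinatorial'' will follow once the structure is cofibrantly generated. I take $W$ and $\mathrm{Fib}$ to be the two classes of the statement --- $W$ the Dwyer--Kan equivalences and $\mathrm{Fib}$ the maps that are levelwise Kan fibrations with $h(f)$ an isofibration --- and define cofibrations by the left lifting property against $W\cap\mathrm{Fib}$.

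For the generators I introduce the functor sending a simplicial set $K$ to the two-object simplicial category $UK$ with $UK(a,a)=UK(b,b)=\ast$, $UK(a,b)=K$ and $UK(b,a)=\emptyset$, and set
\[
I = \{\emptyset \to \ast\} \cup \{U(\partial\Delta[n] \to \Delta[n])\mid n\ge 0\}, \quad J = \{U(\Lambda^n_k \to \Delta[n])\mid n\ge 1\} \cup \{\ast \to \mathcal H\},
\]
where $\mathcal H$ is a cofibrant simplicial category on two objects witnessing a homotopy equivalence, i.e. a cofibrant model of $\mathrm{Iso}$, and $\ast\to\mathcal H$ selects one object. The first concrete step is to identify $I\text{-inj}$: a map lies in it exactly when it is surjective on objects and each induced map of mapping spaces is an acyclic Kan fibration, and these are precisely the acyclic fibrations I am aiming for. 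I would then check the easy hypotheses of the recognition theorem. Closure of $W$ under retracts and the $2$-out-of-$3$ property reduce to the corresponding facts for Kan--Quillen equivalences of mapping spaces and for equivalences of the homotopy categories $h(-)$. The inclusion $I\text{-inj}\subseteq W\cap J\text{-inj}$ is immediate: surjectivity on objects gives essential surjectivity, a levelwise acyclic fibration is homotopically fully faithful, and acyclic Kan fibrations are Kan fibrations inducing isofibrations on $h(-)$. For the closing hypothesis I verify $J\text{-inj}\cap W\subseteq I\text{-inj}$: a fibration that is a Dwyer--Kan equivalence is surjective on objects (essential surjectivity lifts through the isofibration on $h(-)$) and levelwise an acyclic fibration (homotopically fully faithful plus levelwise Kan fibration).

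The hard part will be the remaining hypothesis, that every relative $J$-cell complex lies in $W\cap I\text{-cof}$, which is where the real homotopy theory sits. For the local generators $U(\Lambda^n_k\to\Delta[n])$ a pushout attaches cells to a single mapping space and fixes the object set, so I must check that this preserves being a Dwyer--Kan equivalence and that it persists under transfinite composition, transporting the acyclic-cofibration behaviour of horns in $\sSet$ through $U$ while controlling the composition structure. For $\ast\to\mathcal H$ a pushout freely adjoins a new object homotopy-equivalent to an existing one, and I must show \emph{directly}, without yet having left properness, that this inclusion is a Dwyer--Kan equivalence by computing the mapping spaces of the pushout $C\sqcup_{\ast}\mathcal H$; dually, matching $J\text{-inj}$ with the fibrations of the statement requires that right lifting against $\ast\to\mathcal H$ encode exactly the lifting of homotopy equivalences, which forces $\mathcal H$ to be a genuinely homotopy-coherent (cofibrant) model of $\mathrm{Iso}$ rather than one detecting only strict isomorphisms. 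I expect this essential-surjectivity analysis --- both the pushout computation and the lifting characterisation of fibrations --- to be the principal obstacle. Once the recognition theorem yields the cofibrantly generated, hence combinatorial, structure with the stated weak equivalences and fibrations, left properness is a final separate check: for a cofibration $C\to C'$ and a Dwyer--Kan equivalence $C\to D$, the pushout $C'\to C'\sqcup_C D$ is again a Dwyer--Kan equivalence, proved by analysing how the mapping spaces of the pushout are assembled from those of $C$, $C'$ and $D$ along the cofibration.
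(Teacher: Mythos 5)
The paper gives no proof of this statement---it is quoted verbatim from Bergner---so the benchmark is Bergner's original argument, and your outline follows it closely: the same recognition-theorem strategy, the same generators $\emptyset\to\ast$ and $U(\partial\Delta[n]\to\Delta[n])$ for $I$, the same horn generators $U(\Lambda^n_k\to\Delta[n])$, the same identification of the $I$-injectives with the object-surjective levelwise acyclic fibrations, and the same division of labour (pushout analysis for $J$-cells first, left properness as a separate final check). However, there is one genuine gap, located exactly at the point you yourself flag as the principal obstacle: your $J$ contains a \emph{single} map $\ast\to\mathcal{H}$ for one chosen cofibrant model $\mathcal{H}$ of $\mathrm{Iso}$, and no single $\mathcal{H}$ can work, no matter how homotopy-coherent. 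The theorem characterises fibrations with \emph{arbitrary} targets, not just those whose mapping spaces are Kan. To verify $J\text{-inj}\cap W\subseteq I\text{-inj}$ (surjectivity on objects), or more generally to show that $J$-injectivity forces $h(f)$ to be an isofibration, you must realise an arbitrary homotopy equivalence $e$ in an arbitrary simplicial category $\categ D$ as the image of the generating arrow under some map $\mathcal{H}\to\categ D$. When the mapping spaces of $\categ D$ are not Kan complexes, the witnesses that $e$ is an equivalence exist only as zigzags of $1$-simplices of unbounded shape (since $\pi_0$ identifies vertices along zigzags of edges), whereas a fixed cofibrant $\mathcal{H}$, say $\mathfrak{C}N(\mathrm{Iso})$, prescribes one definite shape of coherence data; in general no simplicial functor from it carries the generating arrow to $e$. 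Consequently $J\text{-inj}$ is strictly larger than the stated class of fibrations and the hypotheses of the recognition theorem cannot be closed with your $J$.

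Bergner's fix, which is the real technical content of her proof and the ingredient missing from your sketch, is to take the second family of generating acyclic cofibrations to be $\{\ast\to\mathcal{H}\}$ where $\mathcal{H}$ ranges over a \emph{set of representatives of isomorphism classes} of simplicial categories with two objects which are cofibrant, weakly equivalent to $\mathrm{Iso}$, and have only countably many simplices in each mapping space. She then proves a realisation lemma: every homotopy equivalence in every simplicial category factors through some member of this set, the interval being built cell by cell so as to encode whatever zigzag homotopies the target actually contains, with the countable bound guaranteeing both that one has a set and that the domains are small. With this replacement the rest of your outline---the $2$-out-of-$3$ and retract checks, the computation that pushouts along $U(\Lambda^n_k\to\Delta[n])$ modify a single mapping space and stay in $W$, the direct verification that attaching an interval object is a Dwyer--Kan equivalence, and the mapping-space analysis of pushouts along cofibrations for left properness---is sound and coincides with the cited proof.
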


\begin{theorem}\cite{Joyal00}\cite{Lurie17}
There exists a cartesian monoidal combinatorial model structure on simplicial sets whose
\begin{itemize}
    \itemt cofibrations are degreewise injections;
    \itemt weak equivalences are morphisms $f$ so that $\mathfrak C(f)$ is a weak equivalence for the Bergner model structure;
    \itemt fibrant objects are quasi-categories.
\end{itemize}
Moreover, the adjunction $\mathfrak C \dashv N$ is a Quillen equivalence.
\end{theorem}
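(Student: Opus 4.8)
The plan is to pass to the associated quasi-categories and to verify the universal property of a localisation. Write $\gamma \colon \categ M_{cf} \to \categ M_{cf}^Q$ for the (identity-on-objects) functor of cubical categories; by the Quillen equivalences $L_\Delta \dashv R^\Delta$ and $\mathfrak{C} \dashv N$ it is harmless to move freely between cubical mapping objects, their realisations under $L_\Delta$, and the Bergner model structure on simplicial categories. It thus suffices to show that $\mathcal{N}(\gamma) \colon N(\categ M_{cf}) \to \mathcal{N}(\categ M_{cf}^Q)$ exhibits its target as the localisation $N(\categ M_{cf})[W^{-1}]$, that is, that for every quasi-category $\categ E$ restriction along $\mathcal{N}(\gamma)$ induces an equivalence from $\Fun{\mathcal{N}(\categ M_{cf}^Q)}{\categ E}$ onto the full subcategory of $\Fun{N(\categ M_{cf})}{\categ E}$ spanned by the functors inverting $W$.

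First I would check that $\mathcal{N}(\gamma)$ inverts $W$. The homotopical axiom, applied with one of the two variables replaced by a trivial (co)fibration (an initial cofibration $\emptyset \to X$, respectively a terminal fibration $Y \to \ast$), shows on the one hand that each mapping cubical set $\categ M^Q(X,Y)$ is fibrant, and on the other hand, via Ken Brown's lemma, that the functors $\categ M^Q(Z,-)$ and $\categ M^Q(-,Z)$ send weak equivalences between fibrant--cofibrant objects to weak equivalences of cubical sets. Hence every $w \in W$ becomes an equivalence in the fibrant cubical category $\categ M_{cf}^Q$, so $\mathcal{N}(\gamma)$ factors through the localisation and produces a comparison functor $c \colon N(\categ M_{cf})[W^{-1}] \to \mathcal{N}(\categ M_{cf}^Q)$. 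Since $c$ is the identity on objects it is essentially surjective, so proving it a Joyal equivalence reduces to showing it is fully faithful, i.e. to a comparison of mapping spaces.

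The heart of the argument is to present both mapping spaces as homotopy colimits of nerves and to match them. By Proposition \ref{propositioneqgrokan} the realisation $L_\Delta \categ M^Q(X,Y)$ is naturally equivalent to $N(\square \downarrow \categ M^Q(X,Y))$, whose objects are pairs $(\square[n], f \colon Q_n X \to Y)$, that is, $n$-fold homotopies from $X$ to $Y$; Corollary \ref{corollaryhomotopycolimitcube} rewrites this nerve as the homotopy colimit of the cubes indexed by these homotopies. On the other side, the derived space of maps of the localisation is computed, via the standard zig-zag presentation, as a homotopy colimit that Theorem \ref{thmthomason} together with the twisted-arrows equivalences of Lemma \ref{lemmatw} and Corollary \ref{corgro} identifies with the nerve of the corresponding transposed Grothendieck construction. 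The functor $c$ is, on mapping spaces, the canonical map comparing these two homotopy-colimit presentations, and I would prove it a weak equivalence by a skeletal induction on the cube dimension: the inductive step is governed cube-by-cube by the fibrations (acyclic in the relevant instances) extracted from the homotopical axiom, and the base case $n=0$ by the hypotheses that $Q_0 \to \id$ is an objectwise weak equivalence and that $Q_0(\emptyset)$ is cofibrant.

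The main obstacle is precisely this last equivalence of mapping spaces: one must verify that the iterated cylinders $Q_n X$, glued together along the cofaces and codegeneracies of $\square$ by the oplax structure of $Q$, encode all higher homotopies between $X$ and $Y$ compatibly enough that the two homotopy-colimit presentations agree after realisation; this is where the full strength of the Leibniz (pullback-corner) condition is needed, both to control the homotopy type cube-by-cube and to run the induction. Granting it, $c$ is fully faithful and essentially surjective, hence a Joyal equivalence, so $\mathcal{N}(\gamma)$ is a localisation at $W$; transporting this conclusion back along $\mathfrak{C} \dashv N$ and $L_\Delta \dashv R^\Delta$ yields the claim for the functor of cubical categories $\categ M_{cf} \to \categ M_{cf}^Q$.
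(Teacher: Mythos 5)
Your text does not prove the statement in question. The statement is the classical existence theorem for the Joyal model structure on simplicial sets (cofibrations the monomorphisms, weak equivalences created by $\mathfrak C$ from the Bergner structure, fibrant objects the quasi-categories, cartesian monoidal and combinatorial) together with the assertion that $\mathfrak C \dashv N$ is a Quillen equivalence; in the paper this is a recollection cited to Joyal and to Lurie, with no proof given. What you have written is instead a proof sketch of an entirely different result, namely the paper's final theorem that the functor of cubical categories $\categ M_{cf} \to \categ M_{cf}^Q$ is a localisation with respect to weak equivalences. Nothing in your argument addresses any clause of the stated theorem: not the factorisation and lifting axioms for the Joyal structure, not combinatoriality or the pushout-product axiom for the cartesian product, not the identification of the fibrant objects with quasi-categories, and not the comparison of $\mathfrak C \dashv N$ with the Bergner structure.

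Moreover, read as an attempt on the stated theorem, your argument is circular: in your opening paragraph you invoke ``the Quillen equivalence $\mathfrak C \dashv N$'' as a known tool, and your passage to $\mathcal N(\categ M_{cf}^Q)$ and to the localisation $N(\categ M_{cf})[W^{-1}]$ of quasi-categories presupposes the Joyal model structure throughout --- that is, you assume exactly what was to be proved. A genuine proof would have to construct the model structure (for instance by Joyal's original arguments, by Cisinski's machinery for model structures on presheaf categories with monomorphisms as cofibrations, or as in Lurie's treatment in \cite{Lurie17}, \S 2.2) and then establish the Quillen equivalence by a comparison of mapping spaces between $\mathfrak C(X)$ and a fibrant replacement, e.g.\ via the straightening or necklace techniques; none of these ingredients appear. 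As a side remark, even as a proof of the paper's localisation theorem your route (verifying the universal property of the localisation directly, then a skeletal induction on cube dimension) differs from the paper's actual proof, which proceeds through the detection criterion of Theorem \ref{thmcondition} and the Dwyer--Kan function-complex results via Lemma \ref{lemmacubicalapprox}; but that is beside the point, since it is not the statement you were asked to prove.
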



\section{Square structures and cubical model categories}

\subsection{Some 2-categorical notion}

\begin{definition}
We will call strict 2-category a category enriched in categories and 2-functor a functor between categories enriched in categories.
\end{definition}

\begin{definition}
Let us denote $\categ{CAT}$ the strict 2-category of categories.
\end{definition}

\begin{definition}
Given a strict 2-category $\categ C$,
we denote
\begin{itemize}
    \itemt $\categ C^\op$ the strict 2-category with the same objects as $\categ C$ and so that
    $$
    \categ C^\op (X,Y) = \categ C (Y,X);
    $$
    \itemt $\categ C^{-,\op}$ the strict 2-category with the same objects as $\categ C$ and so that
    $$
    \categ C^{-,\op} (X,Y) = \categ C (X,Y)^\op;
    $$
    \itemt $\categ C^{\op,\op}$ the strict 2-category with the same objects as $\categ C$ and so that
    $$
    \categ C^{\op,\op} (X,Y) = \categ C (Y,X)^\op.
    $$
We have  canonical isomorphisms
$$
\categ C^{\op,\op} = (\categ C^\op)^{-,\op}
= (\categ C^{-,\op})^\op.
$$
\end{itemize}
\end{definition}

\begin{definition}
Let $\categ{CCAT}$ be the strict 2-category of cubical categories, that is
\begin{itemize}
    \itemt whose objects are categories enriched in cubical sets;
    \itemt whose morphisms are functors of categories enriched in cubical sets;
    \itemt whose 2-morphisms between two morphisms $f,g : C\to D$ are morphisms 
    $$
    h : [1] \times C \to D
    $$
    so that $h_{|\{0\} \times C} =f$ and $h_{|\{1\} \times C} =g$, that is the data of maps $h(X) : f(X) \to g(X)$ in $D$ for any object $X \in C$, so that the following diagram commutes
    $$
    \begin{tikzcd}
    C(X,Y)
    \ar[r, "f"] \ar[d, "g"']
    & D(f(X), g(Y))
    \ar[d]
    \\
    D(g(X), g(Y))
    \ar[r]
    & D(f(X), g(Y))
    \end{tikzcd}
    $$
    for any two objects $X,Y \in C$.
\end{itemize}
\end{definition}

\begin{definition}
Let $\categ{SCAT}$ be the strict 2-category of simplicial categories, that is
\begin{itemize}
    \itemt whose objects are categories enriched in simplicial sets;
    \itemt whose morphisms are functors of categories enriched in simplicial sets;
    \itemt whose 2-morphisms between two morphisms $f,g : C\to D$ are morphisms 
    $$
    h : [1] \times C \to D
    $$
    so that $h_{|\{0\} \times C} =f$ and $h_{|\{1\} \times C} =g$.
\end{itemize}
\end{definition}

\subsection{Square structure on a category}

\begin{definition}
A left square structure on a category $\categ C$ is the structure of a oplax module over the associative algebra $\square$, that is the data of
\begin{itemize}
    \itemt a functor
    \begin{align*}
        \square &\to \Fun{\categ C}{\categ C};
        \\
        {\square[n]} &\mapsto Q_n
    \end{align*}
    \itemt a morphism
    $$
    \alpha_{n,m} : Q_{n+m}  \to Q_n \circ Q_m ,
    $$
    for any $n, m \geq 0$ that is natural with respect to $\square[n]$ and $\square[m]$ in the sense that for any pairs of morphisms of cubes $\phi :\square[n] \to \square[n']$ and $\psi: \square[m] \to \square[m']$, the following diagram commutes
    $$
    \begin{tikzcd}
    Q_{n +m}
    \ar[r, "\alpha_{n,m}"]
    \ar[d,"Q_{\phi \otimes \psi}"']
    & Q_n \circ Q_m
    \ar[d,"Q_{\phi} \circ Q_{\psi}"]
    \\
    Q_{n'+m'}
    \ar[r, "\alpha_{n',m'}"']
    & Q_{n'} \circ Q_{m'};
    \end{tikzcd}
    $$
    \itemt a morphism
    $$
    \beta : Q_0 \to \id_{\categ C};
    $$
    \itemt so that the following diagrams are commutative
    $$
    \begin{tikzcd}
    {Q_{n+m+l}}
    \ar[r] \ar[d]
    & {Q_n \circ Q_{m+l}}
    \ar[d]
    \\
    {Q_{n+m} \circ Q_{l}}
    \ar[r]
    & {Q_n \circ Q_m \circ Q_l}
    \end{tikzcd}
    \begin{tikzcd}
    Q_n Q_0
    \ar[rd, swap, "\id \circ \beta"]
    &Q_n
    \ar[d, equal]\ar[r,"\alpha_{0,n}"]
    \ar[l, swap, "\alpha_{n,0}"]
    & Q_0Q_n
    \ar[ld,"\beta \circ \id"]
    \\
    & Q_n
    \end{tikzcd}
    $$
    for any $n,m,l \geq 0$.
\end{itemize}
A left squared category is the data of a category equipped with a left square structure.
\end{definition}

\begin{definition}
Given a left square structure on a category $\categ C$, the transposed functor $Q^t$ is defined as follows
\begin{align*}
    \categ C &\to \Fun{\square}{\categ{C}}
    \\
    X & \mapsto (\square[n]\mapsto Q_n (X)) .
\end{align*}
\end{definition}

\begin{definition}
Let $(\categ C,Q)$ be a left square category and let $X$ be a cubical set. If $\categ C$ has colimits indexed by $\square \downarrow X$, then we denote
$Q_X$ the endofunctor of $\categ C$ defined by
$$
Q_X(A) = \varinjlim_{\square[n] \to X} Q_n(A) .
$$
\end{definition}

\begin{definition}
A right square structure $P$ on a category $\categ C$ is the data of a left square structure on $\categ C^\op$. More, concretely, this corresponds to a functor
\begin{align*}
        \square^\op &\to \Fun{\categ{C}}{\categ{C}};
        \\
        {\square[n]} &\mapsto P^{(n)},
    \end{align*}
a natural morphism
$$
\alpha_{n,m} : P^{(n)} P^{(n+m)} \to P^{(n+m)},
$$
for any $n, m \geq 0$ and a morphism
    $$
    \beta : \id \to P^{(0)};
    $$
that satisfy the conditions dual to those of a left square structure.
\end{definition}

\begin{definition}
A two sided square structure $(Q,P)$ on a category $\categ C$ is the data of a left square structure so that any functor $Q_n$ has a right adjoint $P^{(n)}$. This corresponds equivalently to a right square structure so that any functor $P^{(n)}$ has a left adjoint $Q_n$.
\end{definition}

\subsection{Square functor}

Let us consider two left squared categories $(\categ C,Q)$ and $(\categ D,O)$. 

\begin{definition}\label{defleftlax}
A left lax square functor from $(\categ C,Q)$ to $(\categ D,O)$ is the data of
\begin{itemize}
    \itemt a functor $F : \categ C \to \categ D$;
    \itemt morphisms of functors $\gamma_n : O_n \circ F \to F \circ Q_n$ that are natural with respect to $\square[n] \in \square$;
    \itemt so that the following diagrams commute
    $$
    \begin{tikzcd}
    O_{n+m} \circ F
    \ar[r] \ar[d]
    & F \circ Q_{n+m}
    \ar[dd]
    \\
    O_{n} \circ O_m \circ F
    \ar[d]
    \\
    O_{n} \circ F \circ Q_m
    \ar[r]
    & F \circ Q_n \circ Q_m
    \end{tikzcd}
    \quad
    \begin{tikzcd}
    O_0 \circ F
    \ar[r] \ar[d]
    & F
    \ar[d]
    \\
    F \circ Q_0
    \ar[r]
    & F .
    \end{tikzcd}
    $$
\end{itemize}
\end{definition}

\begin{definition}
A left oplax square functor from $(\categ C,Q)$ to $(\categ D,O)$ is the data of
\begin{itemize}
    \itemt a functor $F : \categ C \to \categ D$;
    \itemt natural transformations $\gamma_n : F \circ Q_n \to O_n \circ F$
\end{itemize}
that satisfy mutatis mutandis, the same conditions as in Definition \ref{defleftlax}.
\end{definition}

\begin{definition}
A left square functor from $(\categ C,Q)$ to $(\categ D,O)$ is the data of a lax left square functor $(F,\gamma)$ whose structural natural transformation $\gamma_n$ are isomorphisms. This is equivalently an oplax left square functor $(F,\gamma)$ whose structural natural transformation $\gamma_n$ are isomorphisms.
\end{definition}

\begin{definition}
Given two left lax square functor $(F, \gamma)$ and $(G, \xi)$ from $(\categ C,Q)$ and $(\categ D,O)$, a square natural transformation from $(F, \gamma)$ to $(G, \xi)$ is the data of a natural transformation $A: F \to G$ so that the following diagram commutes
$$
\begin{tikzcd}
O_n \circ F
\ar[r] \ar[d]
& F \circ Q_n
\ar[d]
\\
O_n \circ G
\ar[r]
& G \circ Q_n
\end{tikzcd}
$$
for any $\square[n] \in \square$. One can define similarly a square natural transformation between
left oplax square functor.
\end{definition}

\begin{definition}
We denote
\begin{itemize}
    \itemt $\categ{LSquare}_{\mathrm{lax}}$ the strict 2-category of left squared categories, left lax square functors and square natural transformations;
    \itemt $\categ{LSquare}_{\mathrm{oplax}}$ the strict 2-category of left squared categories, left oplax square functors and square natural transformations.
\end{itemize}
\end{definition}

Let us consider now two right squared categories $(\categ A,P)$ and $(\categ B,R)$. 

\begin{definition}
A right lax square functor from $(\categ A,P)$ to $(\categ B,R)$ is the data of a left oplax square functor from $\categ A^\op$ to $\categ B^\op$. Similarly, a right oplax square functor from $(\categ A,P)$ to $(\categ B,R)$ is the data of a left lax square functor from $\categ A^\op$ to $\categ B^\op$.
One can define accordingly square natural transformations between right lax square functors and between right oplax square functors. This yields the strict 2-categories $\categ{RSquare}_{\mathrm{lax}}$ and $\categ{RSquare}_{\mathrm{oplax}}$.
\end{definition}

\begin{proposition}
The involutive 2-functor from $\categ{CAT}$ to $\categ{CAT}^{-,\op}$ that sends a category $C$ to its oppposite $C^\op$ induces two involutive 2-functors
\begin{align*}
    \categ{LSquare}_{\mathrm{lax}} &\simeq \categ{RSquare}_{\mathrm{oplax}}^{-,\op};
    \\
    \categ{LSquare}_{\mathrm{oplax}} &\simeq \categ{RSquare}_{\mathrm{lax}}^{-,\op}.
\end{align*}
\end{proposition}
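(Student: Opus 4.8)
The plan is to realise each claimed equivalence as the lift of the involution $(-)^\op$ to the squared setting, to check that all the structural data and coherence diagrams are transported correctly, and then to deduce involutivity; the latter will upgrade the constructions to strict isomorphisms of $2$-categories, which is what $\simeq$ abbreviates here. I begin by recalling that $(-)^\op\colon\categ{CAT}\to\categ{CAT}^{-,\op}$ is a strict involutive $2$-functor: on objects $\categ C\mapsto\categ C^\op$, on $1$-cells $F\mapsto F^\op$ (preserving direction), and on $2$-cells it sends $\eta\colon F\Rightarrow G$ to $\eta^\op\colon G^\op\Rightarrow F^\op$. The reversal of $2$-cells is exactly what the superscript $-,\op$ on the target records, and the whole task is to promote this $2$-functor to the squared notions.

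At the object level, given a left squared category $(\categ C,Q)$ I apply $(-)^\op$ to its data. The functor $\square\to\Fun{\categ C}{\categ C}$, $\square[n]\mapsto Q_n$, becomes a functor $\square^\op\to\Fun{\categ C^\op}{\categ C^\op}$, $\square[n]\mapsto Q_n^\op$ (the variance in $\square$ flips precisely because $(-)^\op$ reverses natural transformations); the structure map $\alpha_{n,m}\colon Q_{n+m}\Rightarrow Q_n\circ Q_m$ becomes $\alpha_{n,m}^\op\colon Q_n^\op\circ Q_m^\op\Rightarrow Q_{n+m}^\op$; and $\beta\colon Q_0\Rightarrow\id$ becomes $\beta^\op\colon\id\Rightarrow Q_0^\op$. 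Setting $P^{(n)}=Q_n^\op$ these are exactly the data of a right square structure on $\categ C^\op$, and since $(-)^\op$ is a strict $2$-functor it carries the associativity and unit triangles of $(Q,\alpha,\beta)$ to the dual diagrams, which are the coherence axioms for $(\categ C^\op,P)$. This merely reflects that a right square structure on $\categ C^\op$ is, by definition, a left square structure on $(\categ C^\op)^\op=\categ C$, namely $Q$ itself.

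On $1$- and $2$-cells I proceed similarly. A left lax square functor $(F,\gamma)\colon(\categ C,Q)\to(\categ D,O)$ with $\gamma_n\colon O_n\circ F\Rightarrow F\circ Q_n$ is sent to $(F^\op,\gamma^\op)$, where now $\gamma_n^\op\colon F^\op\circ Q_n^\op\Rightarrow O_n^\op\circ F^\op$ points the other way; this is precisely the datum of a right oplax square functor $(\categ C^\op,P)\to(\categ D^\op,R)$, i.e. of a left lax square functor between the opposites, the exchange of lax and oplax being forced by the reversal of $\gamma$. The two coherence diagrams of Definition \ref{defleftlax} are carried by $(-)^\op$ to the defining diagrams of a right oplax square functor. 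Finally a square natural transformation $A\colon(F,\gamma)\Rightarrow(G,\xi)$ is sent to $A^\op\colon G^\op\Rightarrow F^\op$; its naturality square dualizes to the one required of a square natural transformation of right oplax square functors, and its reversal of direction is exactly absorbed by passing to $\categ{RSquare}_{\mathrm{oplax}}^{-,\op}$ rather than $\categ{RSquare}_{\mathrm{oplax}}$. Strict functoriality of the assignment (preservation of horizontal and vertical composites and of identity cells) is inherited directly from that of $(-)^\op$ on $\categ{CAT}$, since all the added structure composes strictly.

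I expect the only genuine work, and hence the main obstacle, to be the bookkeeping of this last step: confirming that $(-)^\op$ transforms each coherence diagram of a left lax square functor and of a square natural transformation into exactly the coherence diagram of the dual right oplax notion, while keeping straight the lax/oplax interchange and the direction of every structural $2$-cell, so that the $2$-cell reversal is matched by the superscript $-,\op$ rather than surfacing as a sign error. None of these checks is hard, but there are several. Once they are done, applying the construction twice returns $(\categ C^{\op\op},Q^{\op\op})=(\categ C,Q)$, $(F^{\op\op},\gamma^{\op\op})=(F,\gamma)$ and $A^{\op\op}=A$ by involutivity of $(-)^\op$, so the two $2$-functors are mutually inverse and the displayed equivalences are strict isomorphisms. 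The second line of the statement, $\categ{LSquare}_{\mathrm{oplax}}\simeq\categ{RSquare}_{\mathrm{lax}}^{-,\op}$, follows verbatim by interchanging \emph{lax} and \emph{oplax} throughout.
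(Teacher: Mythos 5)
Your proposal is correct and matches the paper's approach: the paper's own proof is simply ``Straightforward with the definitions,'' and your argument is exactly that unfolding --- transporting the data $(Q,\alpha,\beta)$, the lax structure maps $\gamma_n$, and the square natural transformations through the strict involutive $2$-functor $(-)^\op$, with the $2$-cell reversal absorbed by the superscript $-,\op$ and the lax/oplax interchange forced by the reversal of $\gamma$. The only caveat is that what you present as the ``main obstacle'' (the diagram bookkeeping) is precisely what the paper deems routine, so your write-up is a faithful, more detailed version of the same proof.
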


\begin{proof}
Straightforward with the definitions.
\end{proof}

\begin{definition}
Let $\categ{LSquare}_{\mathrm{lax}}^{ts}$ be the strict 2-category
\begin{itemize}
    \itemt whose objects are two-sided squared categories;
    \itemt whose mapping category from $C$ to $D$ is
    $$
    \categ{LSquare}_{\mathrm{lax}}^{ts}(C,D)
    =
    \categ{LSquare}_{\mathrm{lax}}(C,D)
    $$
    where $C$ and $D$ are seen as left squared categories.
\end{itemize}
One can define similarly $\categ{LSquare}_{\mathrm{oplax}}^{ts}$, $\categ{RSquare}_{\mathrm{lax}}^{ts}$ and $\categ{RSquare}_{\mathrm{oplax}}^{ts}$.
\end{definition}

\begin{proposition}
One has a canonical isomorphism $\categ{LSquare}_{\mathrm{lax}}^{ts} = \categ{RSquare}_{\mathrm{oplax}}^{ts}$.
\end{proposition}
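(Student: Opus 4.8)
The plan is to realise the claimed identification through the calculus of mates, exploiting that every object carries, for each $n$, the adjunctions $Q_n \dashv P^{(n)}$ and $O_n \dashv S^{(n)}$. Since the two $2$-categories $\categ{LSquare}_{\mathrm{lax}}^{ts}$ and $\categ{RSquare}_{\mathrm{oplax}}^{ts}$ have literally the same objects, namely two-sided squared categories, it suffices to produce a strict $2$-functor that is the identity on objects and an isomorphism on each mapping category, and then to exhibit its inverse.

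First I would fix two two-sided squared categories $(\categ C, Q, P)$ and $(\categ D, O, S)$, where $S^{(n)}$ denotes the right adjoint of $O_n$, together with a functor $F : \categ C \to \categ D$. For each $n$ the adjunctions $Q_n \dashv P^{(n)}$ and $O_n \dashv S^{(n)}$ yield, by the mate correspondence with $U = V = F$, a bijection between natural transformations $\gamma_n : O_n \circ F \to F \circ Q_n$ and natural transformations $\delta_n : F \circ P^{(n)} \to S^{(n)} \circ F$. Unwinding the definition of a right oplax square functor, which is a left lax square functor on the opposite categories whose structural maps become, after applying $(-)^\op$, transformations of exactly the shape $\delta_n$, I would identify a left lax square structure on $F$ with a coherent family $\{\gamma_n\}$ and a right oplax square structure on $F$ with a coherent family $\{\delta_n\}$. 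The mate correspondence then provides the desired bijection at the level of the underlying structural data, with explicit inverse obtained by taking mates a second time.

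The core of the argument is to check that this bijection matches the two sets of coherence axioms. Here I would first record the fact implicit in the definition of a two-sided structure: the structure maps $\alpha^P_{n,m}$ and $\beta^P$ of the right square structure are precisely the mates of $\alpha^Q_{n,m}$ and $\beta^Q$ under the adjunctions, which is the content of the word \emph{equivalently} in that definition (indeed the mate of $\beta^Q : Q_0 \to \id$ is exactly $\beta^P : \id \to P^{(0)}$). Granting this, the naturality of $\gamma_n$ in $\square[n]$, the associativity diagram relating $\gamma$ to $\alpha^Q, \alpha^O$, and the unit square relating $\gamma$ to $\beta^Q, \beta^O$ translate term by term into the corresponding axioms for $\delta_n$, because taking mates is functorial: it respects vertical composition, horizontal composition and whiskering of $2$-cells, so every pasting identity between structural $2$-cells is sent to the pasting identity between their mates. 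The same functoriality shows that a square natural transformation $A : F \to G$ compatible with $\gamma, \xi$ is compatible with the mated data $\delta, \zeta$, giving a bijection on $2$-morphisms and compatibility with composition of square functors, so that the assignment is a strict $2$-functor.

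I expect the main obstacle to be exactly this mate bookkeeping for the associativity and unit diagrams, in particular verifying that $\alpha^P, \beta^P$ are the mates of $\alpha^Q, \beta^Q$ and tracking the reversal of composition order, since a composite $Q_n \circ Q_m$ has right adjoint $P^{(m)} \circ P^{(n)}$; this introduces an index swap that must be reconciled with the monoidal structure $\square[n] \otimes \square[m] = \square[n+m]$. Once the coherence is matched, invertibility is automatic: the inverse assigns to a right oplax structure the family of mates $\gamma_n$ of its $\delta_n$, and since the mate of the mate is the original $2$-cell this is a strict inverse $2$-functor, yielding the claimed canonical isomorphism $\categ{LSquare}_{\mathrm{lax}}^{ts} = \categ{RSquare}_{\mathrm{oplax}}^{ts}$.
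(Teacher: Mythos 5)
Your proposal is correct and takes essentially the same approach as the paper: the paper's proof constructs the right oplax structure map as the composite $F P^{(n)} \to R^{(n)} O_n F P^{(n)} \to R^{(n)} F Q_n P^{(n)} \to R^{(n)} F$, which is precisely the mate of $\gamma_n : O_n F \to F Q_n$ under the adjunctions that you describe. Your extra bookkeeping (coherence via functoriality of mates under pasting, invertibility from mate-of-mate being the identity, the index swap for composites) simply fills in details the paper leaves implicit.
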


\begin{proof}
It suffice to notice given a functor $F : C \to D$ between two categories equipped with two sided square structures $(C, Q, P)$ and $(D, O, R)$, there is a canonical one to one correspondence between left lax square structures on $f$ and right oplax square structures on $f$. Indeed, given a left lax square structure $O_n F \to F Q_n $, one gets a right oplax square structure as follows
$$
F P^{(n)} \to R^{(n)} O_n F P^{(n)} \to R^{(n)} F Q_n P^{(n)} \to R^{(n)} F .
$$
\end{proof}

\subsection{From squared categories to cubical categories}

\begin{definition}
Given a category $\categ C$ equipped with a left square structure $Q$, one gets a category enriched in cubical sets denoted $\categ C^Q$ whose objects are those of $\categ C$ and so that for any two objects $X,Y$
$$
\categ C^Q(X,Y)_n = \hom_{\categ C}(Q_n X, Y). 
$$
The composition is given as follows
$$
\categ C^Q(Y,Z) \otimes \categ C^Q(X,Y)
= \varinjlim_{f: Q_n Y \to Z} \square[n] \otimes \varinjlim_{g: Q_m X \to Y} \square[m]
\simeq \varinjlim_{(f,g)} \square[n+m] \to  \categ C^\square(X,Z)
$$
where the last map consists in sending an element $(f,g)$ to the composition 
$$
Q_{n+m} X \xrightarrow{\alpha_{n,m}} Q_n (Q_m(X)) \xrightarrow{Q_n(g)} Q_n(Y) \xrightarrow{f} Z.
$$
The units are given by the maps $\beta(X)$. Moreover, we have a canonical functor of cubical categories
$$
\categ C \to \categ C^Q ,
$$
that is the identity of objects and that sends a morphism $f :X \to Y$ to the element $f \circ \beta(X) \in \categ C^Q(X,Y)_0$.
\end{definition}

\begin{definition}
Given a category $\categ C$ equipped with a right square structure $P$, the related category enriched in cubical sets is denoted $\categ C^P$. If $\categ C$ is equipped with a two sided square structure $(Q,P)$, the related category enriched in cubical sets is denoted $\categ C^Q$, $\categ C^P$ or $\categ C^{P,Q}$.
\end{definition}

\begin{proposition}
The construction $(\categ C, Q) \mapsto \categ C^Q$ yields a 2-functor from $\categ{LSquare}_{\mathrm{lax}}$ to the 2-category of cubical categories $\categ{CCAT}$.
\end{proposition}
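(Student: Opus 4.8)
The plan is to specify the 2-functor on objects, $1$-morphisms and $2$-morphisms, and then to check that every piece of data is well defined and compatible with the $2$-categorical compositions, each verification being forced by one of the coherence axioms of Definition~\ref{defleftlax} together with ordinary naturality. On objects the assignment $(\categ C,Q) \mapsto \categ C^Q$ is already given. On $1$-morphisms, given a left lax square functor $(F,\gamma) : (\categ C,Q) \to (\categ D,O)$, I would define a functor of cubical categories $F^Q : \categ C^Q \to \categ D^O$ that is $F$ on objects and that, on $n$-cells, sends $\phi \in \categ C^Q(X,Y)_n = \hom_{\categ C}(Q_n X, Y)$ to the composite
$$
O_n FX \xrightarrow{\gamma_n(X)} F Q_n X \xrightarrow{F(\phi)} FY ,
$$
an element of $\categ D^O(FX,FY)_n$. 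The first thing to check is that this is a map of cubical sets, i.e. that it commutes with cofaces and codegeneracies; unravelling the structure maps $\phi^\ast$ coming from $Q_\phi : Q_m \to Q_n$ and $O_\phi : O_m \to O_n$, this reduces exactly to the naturality of $\gamma_n$ with respect to $\square[n] \in \square$.

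The key point is compatibility with composition. Since composition in $\categ C^Q$ is presented as a colimit over the cells $(f,g)$ with $f : Q_n Y \to Z$ and $g : Q_m X \to Y$, it suffices to compare, for each such pair, the two resulting $(n+m)$-cells of $\categ D^O(FX,FZ)$. On one side I apply $F^Q$ to the composite $Q_{n+m} X \xrightarrow{\alpha_{n,m}} Q_n Q_m X \xrightarrow{Q_n(g)} Q_n Y \xrightarrow{f} Z$; on the other I compose $F^Q(f)$ with $F^Q(g)$ in $\categ D^O$. Both composites end with the outer factor $F(f)$, and after cancelling it and using the naturality of $\gamma_n$ against the morphism $g$, the remaining identity is
$$
F(\alpha_{n,m}(X)) \circ \gamma_{n+m}(X) = \gamma_n(Q_m X) \circ O_n(\gamma_m(X)) \circ \alpha^O_{n,m}(FX) ,
$$
which is precisely the hexagonal coherence diagram of the lax square functor evaluated at $X$. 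Compatibility with units is the second coherence diagram: applying $F^Q$ to the unit $\beta(X) \in \categ C^Q(X,X)_0$ yields $F(\beta(X)) \circ \gamma_0(X)$, which equals $\beta^O(FX)$ by that square. This establishes that $F^Q$ is a well-defined functor of cubical categories.

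On $2$-morphisms, given a square natural transformation $A : (F,\gamma) \to (G,\xi)$, I would take as cubical natural transformation $F^Q \to G^O$ the family of $0$-cells obtained from the components $A(X) : FX \to GX$ via the canonical functor $\categ D \to \categ D^O$. To verify the naturality square of the definition of $\categ{CCAT}$ I compute both composites of an $n$-cell $\phi : Q_n X \to Y$ with the relevant $0$-cell; using the unit axioms of $O$ (that $\beta^O(O_n-)\circ\alpha^O_{0,n}$ and $O_n(\beta^O)\circ\alpha^O_{n,0}$ are identities) each composite simplifies to, respectively, post-composition by $A(Y)$ and pre-composition by $O_n(A(X))$. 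The naturality of $A$ in $\categ D$ against $\phi$ then reduces the desired equality to
$$
A(Q_n X) \circ \gamma_n(X) = \xi_n(X) \circ O_n(A(X)) ,
$$
which is exactly the defining diagram of a square natural transformation.

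It then remains to check $2$-functoriality: that the identity lax square functor induces the identity cubical functor, that $(G \circ F)^Q = G^O \circ F^Q$ (which on cells follows from one further application of the hexagon-type coherence relating $\gamma$, $\xi$ and their composite), and that vertical and horizontal composition of square natural transformations is preserved. These are all direct diagram chases. I expect the main obstacle to be the bookkeeping in the composition step: one must simultaneously track the colimit presentation of the cubical mapping spaces and the several instances of $\alpha$, $\gamma$ and of naturality, and make sure the comparison is carried out coherently on the generating cells $(f,g)$ rather than after passing to the colimit.
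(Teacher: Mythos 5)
Your construction agrees with the paper's proof: the induced cubical functor acts on $n$-cells by $\phi \mapsto F(\phi)\circ\gamma_n(X)$ (i.e.\ $\hom_{\categ C}(Q_nX,Y)\xrightarrow{F}\hom_{\categ D}(FQ_nX,FY)\to\hom_{\categ D}(O_nFX,FY)$), and 2-morphisms are sent to their components $A(X)$, exactly as in the paper. The paper leaves all verifications implicit, and your diagram chases (hexagon coherence for composition, unit coherence for identities, the square-naturality condition for 2-cells) correctly fill in those omitted checks.
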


\begin{proof}
Such a 2-functor sends a left lax square functor $F : (C, Q) \to (D,O)$ to the functor of cubical categories $C^Q \to D^O$ with the same underlying function on objects and that acts on mapping cubical sets as
$$
\hom_{C}(Q_n X, Y) \xrightarrow{F} \hom_{D}(FQ_n X, FY) \to \hom_{D}(Q_n FX, FY).
$$
Moreover, given two left lax square functors $F,G : (C,Q) \to (D,O)$ and a 2-morphism $A : F \to G$, the maps $A(X) : F(X) \to G(X)$ define a 2-morphisms between the induced cubical functors.
\end{proof}

\begin{corollary}
The construction $(\categ C, P) \mapsto \categ C^P$ yields a 2-functor from $\categ{RSquare}_{\mathrm{oplax}}$ to the 2-category of cubical categories $\categ{CCAT}$.
\end{corollary}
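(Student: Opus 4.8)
The plan is to deduce the statement directly from the preceding proposition by dualising, so that essentially no new computation is required. First I would record the precise relation between the two constructions on objects. Unwinding the definitions, a right square structure $P$ on $\categ C$ is the same thing as a left square structure on $\categ C^\op$, and the cubical category $\categ C^P$ attached to it is
$$
\categ C^P = \left( (\categ C^\op)^P \right)^\op ,
$$
where the inner $(\categ C^\op)^P$ denotes the cubical category produced by the left square construction of the previous proposition. Indeed both sides have the objects of $\categ C$, and their mapping cubical sets agree degreewise, being $\hom_{\categ C}(X, P^{(n)} Y)$ in degree $n$.

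Next I would assemble the required 2-functor as a composite of three 2-functors, each already at our disposal. The proposition relating the two-categories of squared categories gives an isomorphism of strict 2-categories $\categ{RSquare}_{\mathrm{oplax}} \simeq \categ{LSquare}_{\mathrm{lax}}^{-,\op}$, sending $(\categ C, P)$ to $(\categ C^\op, P)$. The previous proposition provides a 2-functor $\Phi : \categ{LSquare}_{\mathrm{lax}} \to \categ{CCAT}$, $(\categ C, Q) \mapsto \categ C^Q$; applying the involution $(-)^{-,\op}$ to source and target yields $\Phi^{-,\op} : \categ{LSquare}_{\mathrm{lax}}^{-,\op} \to \categ{CCAT}^{-,\op}$. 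Finally, the assignment $\categ D \mapsto \categ D^\op$ sending a cubical category to its opposite is an involutive 2-functor $\categ{CCAT}^{-,\op} \to \categ{CCAT}$: a cubical functor $G$ yields $G^\op$ in the same direction, whereas a cubical 2-morphism is reversed, by means of the order-reversing flip of the interval $[1]$ in the defining datum $h : [1] \times C \to D$. The composite
$$
\categ{RSquare}_{\mathrm{oplax}} \xrightarrow{\ \sim\ } \categ{LSquare}_{\mathrm{lax}}^{-,\op} \xrightarrow{\ \Phi^{-,\op}\ } \categ{CCAT}^{-,\op} \xrightarrow{\ (-)^\op\ } \categ{CCAT}
$$
is then a 2-functor, and on objects it carries $(\categ C, P)$ to $((\categ C^\op)^P)^\op = \categ C^P$, as wanted.

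The only point that requires care — and the main, if modest, obstacle — is the bookkeeping of the various $\op$-directions: one must check that the two sources of variance cancel so that the composite is genuinely covariant on $1$-cells and on $2$-cells. Concretely, a right oplax square functor $F$ is by definition a left lax square functor $F^\op$ on opposites, and a square natural transformation $A : F \to G$ corresponds to one $A^\op : G^\op \to F^\op$ pointing the other way; applying $\Phi$ and then the opposite-cubical-category 2-functor reverses the direction of the $2$-cell a second time, restoring it to a $2$-cell $\categ A^P \to \categ B^R$ in the correct orientation. Once this is verified, functoriality and strict $2$-functoriality of the composite are immediate from those of its three factors, and the statement follows.
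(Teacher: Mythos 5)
Your proposal is correct and follows essentially the same route as the paper: the paper's proof is exactly the composite $\categ{RSquare}_{\mathrm{oplax}} = \categ{LSquare}_{\mathrm{lax}}^{-,\op} \to \categ{CCAT}^{-,\op} \xrightarrow{\op} \categ{CCAT}$, with the last arrow the opposite-cubical-category 2-functor. Your additional unwinding of the identification $\categ C^P = ((\categ C^\op)^P)^\op$ and of the cancellation of variance on 1-cells and 2-cells only makes explicit what the paper leaves implicit.
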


\begin{proof}
Such a 2-functor is given as the composition
$$
\categ{RSquare}_{\mathrm{oplax}} = \categ{LSquare}_{\mathrm{lax}}^{-,\op}
\to \categ{CCAT}^{-,\op}\xrightarrow{\op} \categ{CCAT},
$$
where the last arrow is the 2-functor that sends a cubical category to its opposite.
\end{proof}

\begin{proposition}
The following diagram of strict 2-categories commutes
up to a natural isomorphism
$$
\begin{tikzcd}
\categ{LSquare}_{\mathrm{lax}}^{ts} 
\arrow[r, equal]
\ar[d]
& \categ{RSquare}_{\mathrm{oplax}}^{ts}
\ar[r]
& \categ{RSquare}_{\mathrm{oplax}}
\ar[d]
\\
\categ{LSquare}_{\mathrm{lax}}
\ar[rr]
&& \categ{CCAT} .
\end{tikzcd}
$$
\end{proposition}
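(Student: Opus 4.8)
The plan is to exhibit a $2$-natural isomorphism between the two composite $2$-functors from $\categ{LSquare}_{\mathrm{lax}}^{ts}$ to $\categ{CCAT}$. Both send a two-sided squared category $(\categ C, Q, P)$ to a cubical category with the same objects as $\categ C$, so the comparison will be the identity on objects and an adjunction isomorphism on mapping cubical sets. Going down then right produces $\categ C^Q$, with $\categ C^Q(X,Y)_n = \Hom{\categ C}{Q_n X}{Y}$. Going right then down, one first uses the equality $\categ{LSquare}_{\mathrm{lax}}^{ts} = \categ{RSquare}_{\mathrm{oplax}}^{ts}$ to regard $(\categ C, Q, P)$ as a right squared category, forgets to $(\categ C, P)$, and applies $(\categ C, P) \mapsto \categ C^P$; unravelling the two opposites in the definition of this last $2$-functor gives $\categ C^P(X,Y)_n = \Hom{\categ C}{X}{P^{(n)} Y}$. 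The candidate comparison is therefore, levelwise, the adjunction bijection $\Phi_n : \Hom{\categ C}{Q_n X}{Y} \xrightarrow{\sim} \Hom{\categ C}{X}{P^{(n)} Y}$ coming from $Q_n \dashv P^{(n)}$.

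First I would check that the $\Phi_n$ assemble into an isomorphism of cubical sets. Since a two-sided square structure is by definition obtained by passing to right adjoints, the functor $\square^\op \to \Fun{\categ C}{\categ C}$, $\square[n] \mapsto P^{(n)}$, is the mate of $\square \to \Fun{\categ C}{\categ C}$, $\square[n] \mapsto Q_n$; in particular $P_\phi$ is the mate of $Q_\phi$ for every morphism of cubes $\phi$. Naturality of mates then states exactly that $\Phi$ intertwines the action of cofaces and codegeneracies on the two sides, so $\Phi : \categ C^Q(X,Y) \to \categ C^P(X,Y)$ is an isomorphism of cubical sets, natural in $X$ and $Y$.

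Next I would verify that $\Phi$ respects composition and units, hence is an isomorphism of cubical categories. The composition of $\categ C^Q$ is built from $\alpha^Q_{n,m} : Q_{n+m} \to Q_n Q_m$ and the unit from $\beta^Q : Q_0 \to \id$, while dually the composition and unit of $\categ C^P$ are built from $\alpha^P_{n,m} : P^{(n)} P^{(m)} \to P^{(n+m)}$ and $\beta^P : \id \to P^{(0)}$. Because $\alpha^P$ and $\beta^P$ are the mates of $\alpha^Q$ and $\beta^Q$, the bijections $\Phi_n$ carry one composition law to the other; this is the single genuine computation, and it amounts to applying the mate calculus to the defining formula $Q_{n+m} X \to Q_n Q_m X \to Q_n Y \to Z$.

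It then remains to promote $\Phi$ to a $2$-natural transformation. On morphisms, a left lax square functor $(F,\gamma)$ between two-sided squared categories is sent, through the equality $\categ{LSquare}_{\mathrm{lax}}^{ts} = \categ{RSquare}_{\mathrm{oplax}}^{ts}$, to the right oplax structure whose structure maps $\delta_n$ are the mate composites $F P^{(n)} \to R^{(n)} O_n F P^{(n)} \to R^{(n)} F Q_n P^{(n)} \to R^{(n)} F$ of the previous proposition. Chasing an element $f : Q_n X \to Y$ around the naturality square of cubical functors, one side yields the adjunct of $F(f)\circ \gamma_n(X)$ under $O_n \dashv R^{(n)}$, and the other yields $\delta_n(Y) \circ F(\Phi_n f)$; these agree precisely because $\delta_n$ is defined to be the mate of $\gamma_n$, so the square commutes on mapping spaces and trivially on objects. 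Compatibility with $2$-morphisms is immediate, since a square natural transformation $A$ acts on both composites by the same maps $A(X)$ and $\Phi$ is the identity on objects. The main obstacle is thus purely the mate bookkeeping of the third step: keeping the opposite-category conventions straight and confirming that the composition laws of $\categ C^Q$ and $\categ C^P$ are genuinely exchanged by the adjunction isomorphisms; once that is in place, everything else is formal.
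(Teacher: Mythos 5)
Your proposal is correct and takes essentially the same route as the paper, whose entire proof is the one-line assertion that the natural isomorphism is ``the canonical one that relates $\categ C^Q$ to $\categ C^P$'' --- that is, precisely the levelwise adjunction bijection $\Hom{\categ C}{Q_n X}{Y} \cong \Hom{\categ C}{X}{P^{(n)}Y}$ that you construct. Your mate-calculus verifications (compatibility with the cubical structure maps, with composition and units, and the $2$-naturality check against the structure maps $\delta_n$ from the preceding proposition) simply fill in the details the paper leaves implicit.
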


\begin{proof}
Given a two sided squared category $(C, Q, P)$, this natural isomorphism is the canonical one that relates $C^Q$ to $C^P$.
\end{proof}

\subsection{Adjunctions of square structures}

\begin{definition}\label{definsquareadj}
A squared adjunction between two left squared categories $(\categ C, Q)$ and $(\categ D, O)$ is the data of an adjunction in the 2-category $\categ{LSquare}_{\mathrm{lax}}$, that is the data of
\begin{itemize}
    \itemt  a left lax square functor $L$ from $(\categ C, Q)$ to $(\categ D, O)$;
    \itemt  a left lax square functor $R$ from $(\categ D, O)$ to $(\categ C, Q)$;
    \itemt square natural transformations $\eta : \id \to RL$ and $\epsilon : LR \to \id$;
    \itemt so that the following maps are identities
    \begin{align*}
        &L \to LRL \to L;
        \\
        & R \to RLR \to R.
    \end{align*}
\end{itemize}
\end{definition}

\begin{proposition}\cite{Kelly74}
The data of a squared adjunction between the two left squared categories $(\categ C, Q)$ and $(\categ D, O)$ is equivalent to the data of
\begin{itemize}
    \itemt an adjunction $L\dashv R$ between the categories $\categ C$ and $\categ D$;
    \itemt the structure of a left square functor on $L$.
\end{itemize}
\end{proposition}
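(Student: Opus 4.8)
The plan is to recognise the statement as an instance of Kelly's doctrinal adjunction. Indeed, left squared categories and left lax square functors are the strict algebras and lax morphisms for a $2$-monad on $\categ{CAT}$, so that $\categ{LSquare}_{\mathrm{lax}}$ has the form $T\text{-}\mathsf{Alg}_{\ell}$ and the assertion becomes the general fact that the left adjoint of an adjunction in such a $2$-category is automatically a pseudomorphism (here a left square functor). Rather than construct the monad, I would argue directly with mates. The two implications to establish are: from a squared adjunction, extract an ordinary adjunction $L \dashv R$ together with the invertibility of the structure maps $\gamma_n^L \colon O_n L \to L Q_n$ of $L$; and conversely, from an ordinary adjunction $L \dashv R$ with $L$ a left square functor, reconstruct the lax structure of $R$ and the square naturality of the unit and counit.

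For the forward implication, first forget all lax structures. Since the $2$-cells of $\categ{LSquare}_{\mathrm{lax}}$ are ordinary natural transformations and the triangle identities of Definition \ref{definsquareadj} are equalities of such, the quadruple $(L, R, \eta, \epsilon)$ is an honest adjunction in $\categ{CAT}$. It then remains to prove that each $\gamma_n^L$ is invertible. For this I would form the mate of the lax structure $\gamma_n^R \colon Q_n R \to R O_n$ of $R$ under $L \dashv R$, namely
$$\alpha_n \colon L Q_n \xrightarrow{L Q_n \eta} L Q_n R L \xrightarrow{L \gamma_n^R L} L R O_n L \xrightarrow{\epsilon O_n L} O_n L ,$$
and show that $\alpha_n$ is a two-sided inverse of $\gamma_n^L$. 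The identity $\gamma_n^L \circ \alpha_n = \id_{L Q_n}$ should follow from the square naturality of $\epsilon$ — whose relevant commuting square uses the induced lax structure $(L \gamma_n^R)(\gamma_n^L R)$ on $L R$ — together with one triangle identity, and dually $\alpha_n \circ \gamma_n^L = \id_{O_n L}$ from the square naturality of $\eta$, whose square uses the lax structure $(R \gamma_n^L)(\gamma_n^R L)$ on $R L$, and the other triangle identity. This exhibits $L$ as a left square functor.

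For the converse, start from an ordinary adjunction $L \dashv R$ and isomorphisms $\gamma_n^L$ making $L$ a left square functor, and set $\alpha_n := (\gamma_n^L)^{-1} \colon L Q_n \to O_n L$. I would then \emph{define} the lax structure of $R$ to be the mate of $\alpha_n$, namely
$$\gamma_n^R \colon Q_n R \xrightarrow{\eta Q_n R} R L Q_n R \xrightarrow{R \alpha_n R} R O_n L R \xrightarrow{R O_n \epsilon} R O_n .$$
That this is a genuine left lax square functor structure — i.e. that it satisfies the two coherence diagrams of Definition \ref{defleftlax} — follows from the functoriality of the mate construction together with the coherence satisfied by $\gamma_n^L$; the square naturality of $\eta$ and $\epsilon$ is then precisely the content of the triangle equations defining these mates, where invertibility of $\gamma_n^L$ is what allows one to pass freely between the lax maps $\gamma_n^L$ and the oplax maps $\alpha_n$. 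As the triangle identities already hold at the underlying level, this assembles into a squared adjunction, and the two constructions are mutually inverse.

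The delicate point is the forward computation: verifying that $\alpha_n$ is inverse to $\gamma_n^L$, that is, that the two square-naturality conditions on $\eta$ and $\epsilon$ combine with the triangle identities to produce the two inverse identities. This is a multi-step whiskered string-diagram manipulation, and the only real care needed is bookkeeping of which whiskered unit or counit cancels against which; everything else is formal.
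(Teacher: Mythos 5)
Your proposal is correct and takes essentially the same route as the paper, which gives no argument beyond its citation: the proposition is exactly an instance of Kelly's doctrinal adjunction, and your direct mate calculus is the standard proof of that result specialised to square structures. Two harmless slips in the bookkeeping that you yourself flag as the delicate point: the pairing of hypotheses with conclusions is swapped --- $\gamma_n^L \circ \alpha_n = \id_{LQ_n}$ follows from the square naturality of $\eta$ (first rewrite $\gamma_n^L \circ (\epsilon O_n L) = (\epsilon L Q_n) \circ (LR\gamma_n^L)$ by ordinary naturality of $\epsilon$, then apply $(R\gamma_n^L)(\gamma_n^R L)(Q_n \eta) = \eta Q_n$), whereas $\alpha_n \circ \gamma_n^L = \id_{O_n L}$ follows from the square naturality of $\epsilon$ --- and both composites then close with the \emph{same} triangle identity $(\epsilon L)(L \eta) = \id_L$, not one each. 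Also, left squared categories are oplax actions of $\square$ rather than strict algebras for a $2$-monad on $\categ{CAT}$, but since you bypass the monadic framing and argue directly with mates, nothing in your argument depends on that identification.
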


\begin{definition}
A squared adjunction between two right squared categories $(\categ C, Q)$ and $(\categ D, O)$ is the data of a squared adjunction between the two left squared categories $\categ C^\op$ and $\categ D^\op$. Equivalently, this is an adjunction in the strict 2-category $\categ{RSquare}_{\mathrm{oplax}}$.
\end{definition}

\begin{definition}
Given an adjunction between categories
$$
\begin{tikzcd}
C \ar[rr, shift left, "L"]
&& D \ar[ll, shift left, "R"]
\end{tikzcd}
$$
and a left square structure $Q$ on $C$, the transferred square structure on $D$ is the left square structure $O$ defined as
$$
O_n = L Q_n R
$$
and whose structure maps are
\begin{align*}
    \alpha_{n,m}&: O_{n+m} = L Q_{n+m} R \to
    L Q_{n} Q_m R \to L Q_{n} RL Q_m R ;
    \\
    \beta &: O_{0} = L Q_{0} R \to LR \to \id .
\end{align*}
Then, $R$ gets the structure of a left lax square functor given by
$$
Q_n R \xrightarrow{\eta (Q_n R)} RL Q_n R .
$$
This induces the structure of a left oplax square functor on $L$ given by 
$$
L Q_n  \xrightarrow{LQ_n(\eta)} L Q_n RL .
$$
\end{definition}

\subsection{From cylinders and paths to square structures}

\begin{definition}
A natural cylinder object on a category $\categ C$ is a functor
$$
Q_{\leq 1} : \square_{\leq 1} \to \Fun{\categ C}{\categ C}
$$
that sends $\square[0]$ to the identity $\id_{\categ C}$.
\end{definition}

\begin{proposition}
Let $\categ C$ be a category. The following categories are canonically equivalent
\begin{enumerate}
    \item the category of natural cylinder objects on $\categ C$;
    \item the category of strict monoidal functors from $\square$ to $\Fun{\categ C}{\categ C}$ ;
    \item the category of monoidal functors from $\square$ to $\Fun{\categ C}{\categ C}$;
    \item the full subcategory of
    $$
    \categ{LSquare}_{\mathrm{oplax}}(\categ C, \categ C) \times_{\Fun{\categ C}{\categ C}} \{\id\}
    $$
    spanned by the left square structures $Q$ so that the maps
    \begin{align*}
        \alpha_{n,m} &: Q_{n+m} \to Q_n \circ Q_m
        \\
        \beta & : Q_0 \to \id
    \end{align*}
    are isomorphisms.
\end{enumerate}
\end{proposition}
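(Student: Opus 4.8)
The plan is to read all four categories as variants of monoidal functors out of $\square$ and to connect them through the earlier proposition identifying monoidal functors on $\square$ with functors on $\square_{\leq 1}$. Throughout, $\Fun{\categ{C}}{\categ{C}}$ carries its strict monoidal structure given by composition, with unit $\id_{\categ{C}}$, and $\square$ is strict monoidal with $\square[n] \otimes \square[m] = \square[n+m]$ and unit $\square[0]$. The starting observation is that, by the very definition of a left square structure, the data $(Q_n, \alpha_{n,m}, \beta)$ is exactly that of an oplax monoidal functor $\square \to \Fun{\categ{C}}{\categ{C}}$: the functor is $\square[n] \mapsto Q_n$, the oplax structure maps are $\alpha_{n,m} : Q_{n+m} \to Q_n \circ Q_m$, the counit is $\beta : Q_0 \to \id$, and the two coherence diagrams are the associativity and unit axioms. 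Hence $(4)$ is the category of those oplax monoidal functors whose structure maps $\alpha_{n,m}$ and $\beta$ are invertible and whose value at $\square[0]$ is $\id$, with square (that is, oplax monoidal) natural transformations as morphisms.

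First I would treat $(1) \simeq (2)$ directly, using the universal property of $\square$ as the free strict monoidal category on the co-interval $(\square[1]; \delta^0, \delta^1, \sigma)$ subject to $\sigma \delta^i = \id_{\square[0]}$ — the strict counterpart of the cited proposition. Consequently a strict monoidal functor $\square \to \categ{E}$ is the same datum as such a co-interval in $\categ{E}$, that is, a functor $\square_{\leq 1} \to \categ{E}$ sending $\square[0]$ to the monoidal unit on the nose (strict monoidal functors preserve the unit strictly). Taking $\categ{E} = \Fun{\categ{C}}{\categ{C}}$, the unit is $\id_{\categ{C}}$ and this condition reads exactly $\square[0] \mapsto \id$, which is the defining condition of a natural cylinder object. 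This yields an isomorphism of categories between $(1)$ and $(2)$, matching monoidal natural transformations with natural transformations of $\square_{\leq 1}$-diagrams.

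Next I would obtain $(3) \simeq (1)$ and $(4) \simeq (1)$, which together with the previous step give all four equivalences. Applying the earlier proposition with $\categ{E} = \Fun{\categ{C}}{\categ{C}}$ identifies the monoidal functors of $(3)$ with functors $\square_{\leq 1} \to \Fun{\categ{C}}{\categ{C}}$. The value of such a functor at $\square[0]$ is only canonically isomorphic to $\id$, so I would invoke a unit-strictification argument: the unit comparison is an isomorphism $\id \xrightarrow{\sim} Q_0$, whence every strong monoidal functor is monoidally and naturally isomorphic to one sending $\square[0]$ to $\id$ on the nose; the full subcategory on these strictified representatives is equivalent to $(3)$ and, under the earlier equivalence, to the functors $\square_{\leq 1} \to \Fun{\categ{C}}{\categ{C}}$ sending $\square[0] \mapsto \id$, i.e.\ to $(1)$. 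For $(4)$, an oplax monoidal functor with invertible $\alpha_{n,m}$ and $\beta$ is the same as a strong monoidal functor (invert the structure maps to produce the monoidal comparisons), and the fiber-product condition $Q_0 = \id$ selects exactly the unit-strictified representatives; thus $(4)$ is isomorphic to the full subcategory of $(3)$ on these, already identified with $(1)$. Naturality in morphisms is immediate, since square, monoidal, and $\square_{\leq 1}$-diagram natural transformations all correspond under these identifications.

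I expect the only genuine obstacle to be the reconciliation of unit conventions: the strict requirement $\square[0] \mapsto \id$ built into $(1)$, $(2)$ and $(4)$ versus preservation of the unit merely up to isomorphism for the general monoidal functors of $(3)$. The crux is the unit-strictification lemma stating that the full subcategory of strong monoidal functors sending the unit to $\id$ is equivalent to the whole category; one must check that this strictification is compatible both with the earlier $\square_{\leq 1}$-equivalence and with the oplax-to-strong translation used for $(4)$, and that every identification is natural. A secondary point to keep straight is the direction of the structure maps — $\alpha_{n,m}$ is the oplax comparison $Q_{n+m} \to Q_n \circ Q_m$ — so that invertibility of $\alpha_{n,m}$ and $\beta$ really yields a strong, and not merely lax, monoidal functor. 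The remaining verifications are routine diagram chases.
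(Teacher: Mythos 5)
Your reading of the four items and your overall route --- left square structures are exactly oplax monoidal functors $\square \to \Fun{\categ C}{\categ C}$, item (2) matches item (1) by freeness of $\square$ as a strict monoidal category on the co-interval, and items (3) and (4) are handled by unit-strictification and by inverting the structure maps --- is certainly the ``straightforward check'' the paper has in mind (its proof consists of that phrase alone), and at the level of objects every step you describe goes through. The genuine gap is at the level of morphisms, exactly where you declare the correspondences ``immediate''. A monoidal natural transformation between strict monoidal functors $\square \to \Fun{\categ C}{\categ C}$ is forced by the unit axiom to have identity component at $\square[0]$; a natural transformation of $\square_{\leq 1}$-diagrams between two cylinder objects is subject to no such condition, its component at $\square[0]$ being an arbitrary natural transformation $\id_{\categ C} \to \id_{\categ C}$. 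Concretely, for the degenerate cylinder $Q_1 = \id_{\categ C}$ (all structure maps identities) on $\categ C = \mathrm{Vect}_{\mathbb K}$, the endomorphism monoid in the diagram reading of (1) is the multiplicative monoid $\mathbb K$ (the center of $\categ C$), while the monoidal endomorphisms of the corresponding strict monoidal functor form the trivial monoid. So the restriction functor from (2) to (1) is not full if (1) carries all diagram transformations: for (1) $\cong$ (2) to hold one must take as morphisms in (1) only those transformations whose component at $\square[0]$ is the identity (that is, read (1), like (4), as a strict fibre over $\{\id\}$), and a complete proof has to fix this convention and check that all four identifications respect it.

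A second, related inaccuracy concerns (4): the fibre-product condition $Q_0 = \id$ does not select ``exactly the unit-strictified representatives''. It forces the functor $Q_0$ to equal $\id_{\categ C}$, but $\beta : Q_0 \to \id$ may still be a non-identity natural automorphism of $\id_{\categ C}$: for any invertible central element $a$, taking every $Q_n = \id_{\categ C}$, every $\alpha_{n,m} = a$ and $\beta = a^{-1}$ gives an object of (4) that is not unit-strictified. One therefore needs an extra argument that each such object is isomorphic, inside (4), to one with $\beta = \id$ --- and whether this is possible depends again on which morphisms (4) carries, i.e.\ on the same unit-component bookkeeping as above. Finally, note that the earlier proposition you invoke is itself stated too strongly (a functor $\square_{\leq 1} \to \Fun{\categ C}{\categ C}$ whose value at $\square[0]$ is not isomorphic to $\id_{\categ C}$ cannot be the restriction of a monoidal functor, so the displayed composite is not essentially surjective); this does not derail you, since your argument only uses its full-faithfulness together with the extension of unit-strict diagrams, but a careful write-up should use the corrected form rather than cite it as stated.
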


\begin{proof}
This follows from a straightforward check.
\end{proof}


\section{Square structures and homotopy}

The goal of this section is to describe model categories equipped with square structures that satisfies some coherence with respect to the model structure.

\subsection{Homotopical square structure on model category}

\begin{proposition}
Let $\categ M$ be a model category and let $Q$ be a left square structure on $\categ M$. The following assertions are equivalent.
\begin{enumerate}
    \item for any cofibration $f : X \to X'$ and any fibration $g : Y \to Y'$, the following map of cubical sets
$$
\categ C^Q(X',Y) \to \categ C^Q(X,Y)\times_{\categ C^Q(X,Y')} \categ C^Q(X',Y')
$$
is a fibration that is also acyclic if either $f$ or $g$ is acyclic;
\item for any cofibration of cubical sets $f : A \to B$ and any cofibration $g: X \to Y$ in $\categ M$, the map
$$
Q_B(X) \coprod_{Q_A(X)} Q_A(Y) \to Q_B(Y)
$$
is a cofibration that is also acyclic if either $f$ or $g$ is acyclic;
\item for any cofibration $f: X \to Y$ in $\categ M$ and any $n \geq 0$, the map
$$
Q_n(X) \coprod_{Q_{\partial\square[n]}(X)} Q_{\partial\square[n]}(Y) \to Q_n(Y)
$$
is a cofibration that is also acyclic if $f$ is acyclic; moreover, for any generating acyclic cofibration
$$
\sqcap^{i, \epsilon}[n] \to \square[n] 
$$
the morphism
$$
Q_{n}(X) \coprod_{Q_{\sqcap^{i, \epsilon}[n]}(X)} Q_{\sqcap^{i, \epsilon}[n]}(Y) \to Q_{n}(Y)
$$
is an acyclic cofibration;
\item the functor
$$
X \in \categ{M} \mapsto (Q_n(X))_{n \in \square} \in \Fun{\square}{\categ{M}} 
$$
sends cofibration to Reedy cofibrations and acyclic cofibrations to Reedy acyclic cofibrations; moreover, for any generating acyclic cofibration
$$
\sqcap^{i, \epsilon}[n] \to \square[n] 
$$
and any cofibration $f: X \to Y$ in $\categ M$, the morphism
$$
Q_{n}(X) \coprod_{Q_{\sqcap^{i, \epsilon}[n]}(X)} Q_{\sqcap^{i, \epsilon}[n]}(Y) \to Q_{n}(Y)
$$
is an acyclic cofibration.
\end{enumerate}
\end{proposition}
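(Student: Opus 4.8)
The plan is to prove the chain of equivalences $(1)\Leftrightarrow(2)\Leftrightarrow(3)\Leftrightarrow(4)$, each resting on a standard piece of the theory of Quillen bifunctors adapted to the present setting. The cornerstone is the adjunction isomorphism
$$
\hom_{\categ M}(Q_B(X), Y) \cong \hom_{\square-\Set}(B, \categ M^Q(X,Y)),
$$
natural in the cubical set $B$ and in $X,Y \in \categ M$, which follows at once from the coend formula $Q_B(X) = \varinjlim_{\square[n]\to B} Q_n(X)$ and the definition $\categ M^Q(X,Y)_n = \hom_{\categ M}(Q_n X, Y)$. Writing $j \mathbin{\hat{Q}} f$ for the pushout-product of a cubical map $j : A \to B$ with an $\categ M$-map $f : X \to X'$, namely $Q_B(X)\coprod_{Q_A(X)} Q_A(X') \to Q_B(X')$, and $\widehat{\categ M^Q}(f,g)$ for the pullback-hom appearing in (1), the isomorphism above yields a natural bijection between lifting problems of $j$ against $\widehat{\categ M^Q}(f,g)$ and lifting problems of $j\mathbin{\hat{Q}}f$ against $g$. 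This is the two-variable adjunction lifting correspondence, and it needs only the single adjunction displayed above; no cotensor in $\categ M$ is required.

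For $(1)\Leftrightarrow(2)$ I would then argue exactly as in the proof that the various forms of a Quillen two-variable adjunction coincide (Hovey's bifunctor lemma). Characterising cubical (acyclic) fibrations by the right lifting property against the generating (acyclic) cofibrations, and $\categ M$-(acyclic) cofibrations by the left lifting property against (acyclic) fibrations, both (1) and (2) unwind, via the lifting correspondence, to the single symmetric family of statements ``$j\mathbin{\hat{Q}}f$ has the left lifting property against $g$'' for $j$ a cubical cofibration, $f$ an $\categ M$-cofibration and $g$ an $\categ M$-fibration, with the appropriate one of the three maps acyclic. The only point requiring care is to match the acyclicity bookkeeping on the two sides, which proceeds as in the classical lemma.

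The equivalence $(2)\Leftrightarrow(3)$ is the reduction to generators: $(2)\Rightarrow(3)$ is immediate specialisation, and for $(3)\Rightarrow(2)$ I fix an $\categ M$-cofibration $g'$ and consider the class of cubical maps $j$ such that $j\mathbin{\hat{Q}}g'$ is a cofibration (resp. acyclic cofibration) in $\categ M$. Since $B\mapsto Q_B(-)$ preserves colimits, this class is saturated (closed under pushout, transfinite composition and retract), so it suffices that it contain a generating set; the maps $\partial\square[n]\to\square[n]$ give the cofibration part and, when $g'$ is acyclic, the acyclic part, while for general $g'$ the acyclic-cofibration part on acyclic cubical cofibrations $j$ follows since these are generated by the $\sqcap^{i,\epsilon}[n]\to\square[n]$ handled by the ``moreover'' clause of (3). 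Finally $(3)\Leftrightarrow(4)$ is a reinterpretation: since $\square$ is a strict Reedy category with degree the cube dimension and $B\mapsto Q_B(X)$ is cocontinuous, the boundary inclusion identifies $Q_{\partial\square[n]}(X)$ with the $n$-th latching object of the $\square$-diagram $(Q_m(X))_m$, so the relative latching maps of $X\mapsto(Q_n X)_n$ are precisely the maps in the first clause of (3); as Reedy (acyclic) cofibrations are exactly the maps whose relative latching maps are (acyclic) cofibrations, the first clauses of (3) and (4) agree, and their ``moreover'' clauses are literally identical.

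I expect the main obstacle to be the saturation step in $(3)\Rightarrow(2)$: one must check that the functor $j\mapsto j\mathbin{\hat{Q}}g'$ carries the cellular presentation of cubical cofibrations (monomorphisms) to cellular constructions built from the generating maps, and in particular that the relevant, possibly infinite, colimits exist and are preserved. This is where the cocontinuity of $Q_{(-)}$ must interact cleanly with the standing finiteness and size hypotheses on $\categ M$, and it is the one place where the argument is not purely formal.
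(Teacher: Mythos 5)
Your proposal is correct, and it supplies in full the argument that the paper simply dismisses with ``This follows from a straightforward checking'': the adjunction $\hom_{\categ M}(Q_B(X),Y)\cong\hom_{\square-\Set}(B,\categ M^Q(X,Y))$, the pushout-product/pullback-hom lifting correspondence, reduction to the generating (acyclic) cofibrations of cubical sets, and the identification of $Q_{\partial\square[n]}(X)$ with the latching object are exactly the standard ingredients one would use here. Two remarks: the size concern you flag at the end is genuine but is really a defect of the statement of assertion (2) itself rather than of your proof, since the paper's model categories are only required to have finite colimits, so $Q_A$ and $Q_B$ need not exist for infinite cubical sets; and note that the equivalences $(1)\Leftrightarrow(3)\Leftrightarrow(4)$ can be run entirely through the generating maps (all finite), so the saturation step and its attendant size issues are needed only for $(3)\Rightarrow(2)$.
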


\begin{proof}
This follows from a straightforward checking.
\end{proof}

\begin{definition}
Let $\categ M$ be a model category. A left square structure $Q$ on $\categ M$ is called homotopical if the three following conditions are satisfied:
\begin{enumerate}
    \item for any cofibration $f : X \to X'$ and any fibration $g : Y \to Y'$, the following map of cubical sets
$$
\categ C^Q(X',Y) \to \categ C^Q(X,Y)\times_{\categ C^Q(X,Y')} \categ C^Q(X',Y')
$$
is a fibration that is also acyclic if either $f$ or $g$ is acyclic;
\item the map $Q_0(X) \to X$ is a weak equivalence for any $X \in \categ M$;
\item $Q_0(\emptyset)$ is cofibrant.
\end{enumerate}
\end{definition}

This implies in particular that $Q_0$ preserves cofibrant objects.

\begin{definition}
Let $\categ M$ be a model category. A right square structure $P$ on $\categ M$ is called homotopical if the related left square structure on $\categ{M}^\op$ is homotopical, that is if the three following conditions are satisfied:
\begin{enumerate}
    \item for any cofibration $f : X \to X'$ and any fibration $g : Y \to Y'$, the following map of cubical sets
$$
\categ C^P(X',Y) \to \categ C^P(X,Y)\times_{\categ C^P(X,Y')} \categ C^P(X',Y')
$$
is a fibration that is also acyclic if either $f$ or $g$ is acyclic;
\item the map $X \to P^{(0)}(X)$ is a weak equivalence for any $X \in \categ M$;
\item $P^{(0)}(\ast)$ is fibrant.
\end{enumerate}
\end{definition}

\begin{proposition}
Let $(\categ M, Q,P)$ be a model category equipped with a two-sided square structure. Let us suppose that the map $Q_0 \to \id$ is an isomorphism, or equivalently that the map $\id \to P^{(0)}$ is an isomorphism. Then $Q$ is homotopical if and only if $P$ is homotopical.
\end{proposition}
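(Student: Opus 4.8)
The plan is to reduce the claimed equivalence to a single clause of each definition, namely condition (1), because the hypothesis makes conditions (2) and (3) automatic on both sides. First I would record the following trivialisations. If $\beta : Q_0 \to \id$ is an isomorphism, then $Q_0(X) \to X$ is an isomorphism and hence a weak equivalence, so condition (2) for $Q$ holds; and $Q_0(\emptyset) \cong \emptyset$ is the initial object of $\categ{M}$, which is always cofibrant, so condition (3) for $Q$ holds. Dually, using the equivalent hypothesis that $\id \to P^{(0)}$ is an isomorphism, the map $X \to P^{(0)}(X)$ is an isomorphism hence a weak equivalence, giving condition (2) for $P$, and $P^{(0)}(\ast) \cong \ast$ is terminal hence fibrant, giving condition (3) for $P$. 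Consequently, under the hypothesis, $Q$ is homotopical if and only if condition (1) for $Q$ holds, and $P$ is homotopical if and only if condition (1) for $P$ holds.

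It then remains to prove that condition (1) for $Q$ is equivalent to condition (1) for $P$; I would emphasise that this equivalence in fact holds for an arbitrary two-sided square structure, the hypothesis being needed only for the trivialisations above. The key input is the canonical natural isomorphism of cubical categories $\categ{M}^Q \cong \categ{M}^P$ established earlier (the proposition asserting that the square of $2$-categories relating $\categ{LSquare}^{ts}_{\mathrm{lax}}$, $\categ{RSquare}^{ts}_{\mathrm{oplax}}$ and $\categ{CCAT}$ commutes up to natural isomorphism). Concretely, since $(Q,P)$ is two-sided we have adjunctions $Q_n \dashv P^{(n)}$ for every $n$, hence natural bijections $\hom_{\categ{M}}(Q_n X, Y) \cong \hom_{\categ{M}}(X, P^{(n)} Y)$, that is $\categ{M}^Q(X,Y)_n \cong \categ{M}^P(X,Y)_n$; these assemble into an isomorphism of cubical sets $\categ{M}^Q(X,Y) \cong \categ{M}^P(X,Y)$ that is natural in both variables and compatible with the enriched composition.

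Finally, I would transport condition (1) across this isomorphism. For a cofibration $f : X \to X'$ and a fibration $g : Y \to Y'$, the pullback-corner map
$$
\categ{M}^Q(X',Y) \to \categ{M}^Q(X,Y) \times_{\categ{M}^Q(X,Y')} \categ{M}^Q(X',Y')
$$
is built from the precomposition maps induced by $f$ and the postcomposition maps induced by $g$; the hard part will be to check that the isomorphism $\categ{M}^Q(-,-) \cong \categ{M}^P(-,-)$ respects the contravariant functoriality in the source and the covariant functoriality in the target, so that these corner maps correspond on the two sides. This is exactly where one uses that the isomorphism arises from a map of cubical categories, and hence intertwines the enriched composition that encodes both functorialities. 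Granting this compatibility, the two corner maps are isomorphic as morphisms of cubical sets, so one is a fibration (respectively an acyclic fibration) precisely when the other is. This yields the equivalence of the two instances of condition (1), and combined with the first paragraph it shows that $Q$ is homotopical if and only if $P$ is homotopical.
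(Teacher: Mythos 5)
Your proposal is correct and matches the paper's approach: the paper simply states that the result ``just follows from the definitions,'' and your argument is precisely the expected unwinding of that claim --- conditions (2) and (3) trivialise under the isomorphism hypothesis (an isomorphism is a weak equivalence, the initial object is cofibrant, the terminal object is fibrant), and condition (1) transports across the canonical natural isomorphism $\categ M^Q(X,Y)_n = \hom_{\categ M}(Q_n X, Y) \cong \hom_{\categ M}(X, P^{(n)}Y) = \categ M^P(X,Y)_n$ supplied by the adjunctions $Q_n \dashv P^{(n)}$, whose naturality in both variables makes the two pullback-corner maps isomorphic as morphisms of cubical sets.
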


\begin{proof}
It just follows from the definitions.
\end{proof}

\subsection{Coherent cylinders and paths}

\begin{definition}
Let $\categ M$ be a model category. A natural cylinder object is called a coherent cylinder if the induced left square structure is homotopical.
\end{definition}

\begin{definition}
A coherent path object $P^{(\leq 1)}$ on a model category $\categ M$ is the data of a coherent cylinder on $\categ M^\op$.
\end{definition}

\begin{definition}
A coherent cylinder-path pair $(Q_{\leq 1},P^{(\leq 1)})$ of a model category $\categ M$ is the data of a coherent cylinder of $\categ M^\op$ and an adjoint path object, or equivalently a coherent path object and an adjoint cylinder object.
\end{definition}

\subsection{From enriched model categories to homotopical square structures}

Let $\categ E$ be a monoidal model category whose unit is cofibrant.

\begin{definition}
For any $\categ E$-enriched category $\categ C$, its underlying category $\categ{C}_{\Set}$ is the category with the same objects and whose mapping sets are
$$
\hom_{\categ C_{\Set}}(X,Y) = \hom_{\categ E}(\II, \categ C(X,Y))
$$
for any two objects $X,Y$.
\end{definition}

\begin{definition}
 A $\categ E$-model category is the data of an $\categ E$-enriched category $\categ C$ together with a model structure on the underlying category $\categ C_{\Set}$ (that has finite limits and colimits), so that for any objects $X,Y$, the functors
 \begin{align*}
     X' \in \categ C_\Set^\op &\mapsto \categ C(X', Y) \in \categ E
     \\
     Y' \in  \categ C_\Set &\mapsto \categ C(X, Y') \in \categ E
 \end{align*}
 have both a left adjoint. Moreover, we require that for any cofibration $f : A \to B$  and any fibration $g : X \to Y$ in $\categ C_{\Set}$, the map
 $$
 \categ C(B,X) \to \categ C(A,X)\times_{\categ C(A,Y)} \categ C(B,Y)
 $$
 is a fibration that is acyclic whenever $f$ or $g$ is acyclic.
\end{definition}

\begin{definition}
In the case where $\categ{E}$ is the monoidal model category of cubical sets $\square-\Set$, we call cubical model category a $\square-\Set$-model category.
\end{definition}

Let us consider an interval of $\categ E$, that is an object $J$ equipped with a cofibration followind by a weak equivalence
$$
\II \sqcup \II \hookrightarrow J \xrightarrow{\sim} \II
$$
that factorises $(\id_\II, \id_\II)$.
Such an interval induces a monoidal Quillen adjunction
$$
\begin{tikzcd}
\square-\Set
\ar[rr, shift left, "L_J"]
&& \categ E
\ar[ll, shift left, "R^J"]
\end{tikzcd}
$$
whose left adjoint $L_J$ sends $\square[n]$ to $J^{\otimes n}$ (see for instance \cite{LeGrignou18}).

Given such an interval, any $\categ E$-model $\categ C$ induces a cubical model category $\categ C_J$. The mapping cubes are
$$
X, Y \mapsto R^J \categ C(X,Y);
$$
and the tensorisation and the cotensorisation are given by the formulas
\begin{align*}
    A \boxtimes_{\square} X = L_J(A) \boxtimes_{\categ E} X;
    \\
    \langle X, A \rangle_{\square} = \langle X, L_J(A) \rangle_{\categ E};
\end{align*}
for any $X \in \categ C, A \in \square-\Set$.

Moreover, given a cubical model category $\categ C$, the underlying model category gets from the cubical structure  a canonical homotopical two sided square structure $(Q,P)$ whose maps $Q_0 \to \id \to P^{(0)}$ are identities.

\begin{proposition}
Let $\categ M$ be a bicomplete model category. Then a left square structure $Q$ is isomorphic to the left structure induced by a cubical structure if and only if the following conditions are satisfied
\begin{enumerate}
    \item $Q$ is homotopical;
    \item the map $Q_0 \to \id$ is an isomorphism;
    \item for any $n \geq 0$, the functor $Q_n$ has a right adjoint $P^{(n)}$.
\end{enumerate}
\end{proposition}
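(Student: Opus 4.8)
The plan is to treat the two implications separately, the forward one being essentially a bookkeeping consequence of the discussion preceding the statement, and the reverse one carrying all the content.

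For the \emph{only if} direction, suppose $Q$ is isomorphic to the left square structure induced by a cubical (i.e.\ $\square-\Set$-enriched) model structure on $\categ M$. The paragraph just before the statement records that the structure induced by a cubical model category is part of a homotopical two-sided square structure $(Q,P)$ whose structural map $Q_0\to\id$ is the identity; in particular it satisfies conditions (1)--(3). Since being homotopical (which only refers to the mapping cubes $\categ M^Q(X,Y)$), having $Q_0\to\id$ invertible, and having objectwise right adjoints are all preserved under an isomorphism of left square structures, these three conditions transfer to $Q$.

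For the \emph{if} direction the strategy is to reconstruct a cubical model structure out of $Q$ and then check that the square structure it induces is isomorphic to $Q$. First I would use bicompleteness to extend $Q$ to cubical sets via the colimit $Q_A=\colim_{\square[n]\to A}Q_n$ (the functor $Q_{(-)}$ already introduced), which is cocontinuous in $A$; by (3) each $Q_n$ is cocontinuous, so $Q_A$ is cocontinuous in the $\categ M$-variable as well and admits the objectwise right adjoint $P^A=\lim_{\square[n]\to A}P^{(n)}$, which exists since $\categ M$ is complete. From these colimit/limit formulas one reads off the underlying tensoring–cotensoring adjunction $\hom_{\categ M}(Q_A X,Y)\cong\hom_{\square-\Set}(A,\categ M^Q(X,Y))$, where $\categ M^Q$ is the cubical enrichment attached to $Q$.

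The crux, and where I expect the real difficulty to lie, is then to upgrade the \emph{oplax} monoidal functor $Q_{(-)}\colon\square-\Set\to\Fun{\categ M}{\categ M}$ to a \emph{strong} one, equivalently to show that the coherence maps $Q_{A\otimes B}\to Q_AQ_B$ and $Q_{\unit}\to\id$ are invertible; this is exactly what promotes $\categ M^Q$ to a genuinely tensored and cotensored $\square-\Set$-enriched category, rather than merely a cubically enriched one. The unit map is handled by condition (2). For the associativity maps I would reduce, using the equivalences recalled earlier between cocontinuous monoidal functors out of $\square-\Set$, monoidal functors out of $\square$, and functors out of $\square_{\le 1}$, to invertibility of $\alpha$ on the generating cubes, and exploit the objectwise adjoints furnished by (3) to compare $Q_AQ_B$ and $Q_{A\otimes B}$ through their right adjoints by a mates argument, propagating invertibility to all cubical sets by cocontinuity and density of representables. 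With the strong action in hand, $\categ M^Q$ is a $\square-\Set$-enriched, tensored, cotensored category; the pushout–product axiom is precisely condition (1), since by the reformulation of the homotopical condition given earlier in this section the pullback-corner map on mapping cubes being a (trivial) fibration is equivalent to the tensoring $Q_{(-)}$ carrying pairs of (trivial) cofibrations to (trivial) cofibrations. Thus $\categ M^Q$ is a cubical model category, and the square structure it induces sends $\square[n]$ to the tensoring $\square[n]\boxtimes(-)=Q_n$, compatibly with the maps $\alpha_{n,m}$ and $\beta$ by the identifications above; hence it is isomorphic to $Q$, which finishes the reverse implication.
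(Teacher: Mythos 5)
Your \emph{only if} direction is fine, and so is the first half of your \emph{if} direction: the weak tensoring $Q_A=\varinjlim_{\square[n]\to A}Q_n$, the cotensoring $P^A=\varprojlim_{\square[n]\to A}P^{(n)}$, the adjunction $\hom_{\categ M}(Q_AX,Y)\cong\hom_{\square-\Set}(A,\categ M^Q(X,Y))$, and the identification of the pullback-corner axiom with condition (1) are all correct. The genuine gap is the step you yourself call the crux: proving that the oplax structure maps $\alpha_{n,m}\colon Q_{n+m}\to Q_nQ_m$ are invertible. This simply does not follow from (1)--(3), and no argument can supply it. The proposed mates argument is vacuous: for conjugate natural transformations between adjoint pairs, $\alpha_{n,m}$ is invertible if and only if its conjugate $P^{(m)}P^{(n)}\to P^{(n+m)}$ is, so passing to right adjoints only restates the problem. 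In fact there is a counterexample: on $\categ{Ch}_{\mathbb K}$ ($\mathbb K$ a field) take $Q_n=N_*(\Delta[1]^{\times n})\otimes(-)$, where $N_*$ denotes normalized simplicial chains, with oplax structure given by the Alexander--Whitney maps $N_*(\Delta[1]^{n}\times\Delta[1]^{m})\to N_*(\Delta[1]^{n})\otimes N_*(\Delta[1]^{m})$. Condition (2) holds since $Q_0=\id$, condition (3) holds with $P^{(n)}=[N_*(\Delta[1]^{n}),-]$, and condition (1) holds because $A\mapsto N_*(L_\Delta A)\otimes X$ computes $Q_A(X)$, the functor $N_*L_\Delta$ sends (acyclic) cofibrations of cubical sets to (acyclic) cofibrations, and $\categ{Ch}_{\mathbb K}$ satisfies the pushout-product axiom. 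Yet the Alexander--Whitney maps are surjective quasi-isomorphisms, not isomorphisms (in the case $n=m=1$ the graded dimensions are $(4,5,2)$ versus $(4,4,1)$).

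The reason the proposition is nevertheless true is that you are trying to prove more than it asserts, because you are using the standard (strong) notion of enriched tensoring rather than the paper's definition. In this paper, an $\categ E$-model category only requires the two functors $Y'\mapsto\categ C(X,Y')$ and $X'\mapsto\categ C(X',Y)$ to admit left adjoints --- a \emph{weak} tensoring and cotensoring, with no associativity isomorphisms imposed --- and accordingly the square structure induced by a cubical model category is itself merely oplax, its structure maps being produced by the enriched composition. So the intended argument consists exactly of your preliminary steps and nothing more: $\categ M^Q$ is a cubical category whose underlying category is identified with $\categ M$ via $\beta$ by (2); it is weakly tensored by $Q_A$ (bicompleteness) and weakly cotensored by $P^A$ (completeness plus (3)); the pullback-corner axiom is (1); and unwinding the composition law of $\categ M^Q$ (which is defined using $\alpha$) shows that the square structure this cubical model category induces has structure maps exactly $\alpha_{n,m}$ and $\beta$, i.e.\ is $Q$ itself. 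Your attempt to upgrade the $\square-\Set$-action to a strong monoidal one is thus both impossible under hypotheses (1)--(3) and unnecessary for the statement being proved.
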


\begin{proof}
Straightforward.
\end{proof}

\subsection{Cocontinuous coherent cylinders}

Let $\categ{M}$ be a model category equipped with a cylinder object $Q_{\leq 1}$ and the induced left square structure $(Q_n)_n$.

\begin{lemma}
If $Q_1$ preserves finite colimits, then the functor from finite cubical sets to endofunctors of $\categ M$
$$
A \mapsto Q_A
$$
is monoidal.
\end{lemma}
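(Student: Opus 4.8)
The plan is to compute both $Q_{A \otimes B}$ and $Q_A \circ Q_B$ as one and the same colimit indexed by $\square \downarrow A \times \square \downarrow B$, and then to check that the resulting isomorphisms are coherent. Throughout I use that, since the cylinder object induces a \emph{strict} monoidal functor $\square \to \Fun{\categ M}{\categ M}$, one has $Q_0 = \id$ and $Q_{n+m} = Q_n \circ Q_m$ on the nose; in particular the category $\square \downarrow \square[n]$ has the terminal object $(\square[n], \id)$, so that the defining colimit collapses to $Q_{\square[n]} \cong Q_n$. For a \emph{finite} cubical set $A$ the category $\square \downarrow A$ is finite, so $Q_A = \varinjlim_{(\square[m],a) \in \square \downarrow A} Q_m$ exists in $\categ M$, which has all finite colimits.

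First I would record the two colimit-preservation facts that make the argument run. Since $Q_1$ preserves finite colimits by hypothesis and $Q_n = Q_1^{\circ n}$, each $Q_n$ preserves finite colimits; interchanging colimits then shows that $Q_A$ itself preserves finite colimits for every finite $A$, and this is the only place the hypothesis enters. Separately, for fixed $M$ the assignment $X \mapsto Q_X(M)$ is the left Kan extension along the Yoneda embedding of the functor $\square[n] \mapsto Q_n(M)$, hence the functor $X \mapsto Q_X$ preserves finite colimits of finite cubical sets (colimits of endofunctors being computed pointwise).

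Next comes the main computation. The unit is immediate: $Q_{\square[0]} \cong Q_0 = \id$. For the tensor, I would pull $Q_A$ inside the colimit defining $Q_B$, which is legitimate since $Q_A$ preserves finite colimits, and then use $Q_m \circ Q_n = Q_{m+n}$:
\begin{align*}
Q_A \circ Q_B
&= \varinjlim_{(\square[m],a) \in \square \downarrow A} Q_m \circ \Big( \varinjlim_{(\square[n],b) \in \square \downarrow B} Q_n \Big) \\
&\cong \varinjlim_{(\square[m],a)} \varinjlim_{(\square[n],b)} Q_m \circ Q_n
\cong \varinjlim_{\square \downarrow A \times \square \downarrow B} Q_{m+n} .
\end{align*}
On the other hand, the co-Yoneda formula writes $A \cong \varinjlim_{\square \downarrow A} \square[m]$ and $B \cong \varinjlim_{\square \downarrow B} \square[n]$, and since Day convolution is cocontinuous in each variable with $\square[m] \otimes \square[n] = \square[m+n]$, one gets $A \otimes B \cong \varinjlim_{\square \downarrow A \times \square \downarrow B} \square[m+n]$. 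Applying the finitary functor $X \mapsto Q_X$ and using $Q_{\square[k]} \cong Q_k$ yields $Q_{A \otimes B} \cong \varinjlim_{\square \downarrow A \times \square \downarrow B} Q_{m+n}$, which is exactly the same colimit. This produces the natural isomorphism $Q_A \circ Q_B \cong Q_{A \otimes B}$.

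Finally I would verify the monoidal coherence axioms without chasing diagrams. On representables the structural isomorphism is the strict identity $Q_m \circ Q_n = Q_{m+n}$, so the associativity and unit constraints reduce to the strict associativity of composition of endofunctors together with the strict identities $\square[m] \otimes \square[n] = \square[m+n]$ of the category of cubes. Both sides of each coherence diagram are functors on (finite cubical sets)$^{\times k}$ that preserve finite colimits in each variable and agree on tuples of representables; by the finitary density of representables they agree everywhere, so the isomorphisms assemble into a strong monoidal structure on $A \mapsto Q_A$. The main obstacle is the interchange step in the central display: it is precisely the commutation of $Q_A$ with the colimit defining $Q_B$, and it is here that the hypothesis that $Q_1$ — and hence each $Q_n$ and each $Q_A$ — preserves finite colimits is indispensable, the finiteness of $A$ and $B$ being what keeps all colimits inside the finite-colimit structure of $\categ M$.
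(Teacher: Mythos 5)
Your proof is correct and takes essentially the same route as the paper's: observe that each $Q_n$ (and hence $Q_A$) preserves finite colimits, pull $Q_A$ inside the colimit defining $Q_B$, use the strict identities $Q_m \circ Q_n = Q_{m+n}$ coming from the cylinder structure, and identify the resulting double colimit over $\square \downarrow A \times \square \downarrow B$ with $Q_{A \otimes B}$. The only difference is one of explicitness: you spell out the co-Yoneda/Day-convolution justification of the last isomorphism and the density argument for coherence, both of which the paper compresses into an asserted isomorphism and ``a straightforward check.''
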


\begin{proof}
One can first notice that any functor $Q_n$ ($n \in \square$) preserves finite colimits. For any two finite cubical sets $A,B$ and any $X \in \categ M$, we have a natural isomorphism
\begin{align*}
    Q_A Q_B(X)
    &= \varinjlim_{(\square[n], \phi)\in \square \downarrow A} Q_n(\varinjlim_{(\square[m], \psi)\in \square \downarrow B} Q_m(X))
    \\
    &\simeq
    \varinjlim_{(\square[n], \phi)\in \square \downarrow A}
    \varinjlim_{(\square[m], \psi)\in \square \downarrow B} Q_n(Q_m(X))
    \\
    & \simeq
    \varinjlim_{(\square[n], \phi)\in \square \downarrow A}
    \varinjlim_{(\square[m], \psi)\in \square \downarrow B} Q_{n+m}(X)
    \\
    & \simeq Q_{A \otimes B}(X) .
\end{align*}
A straightforward check shows that it defines the structure of a monoidal functor on $A \mapsto Q_A$.
\end{proof}

Let us consider two morphisms between finite cubical sets $g: U \to U'$ and $k : V \to V'$ and let us consider the pushout
$$
\begin{tikzcd}
U \otimes V
\ar[r] \ar[d]
& U' \otimes V
\ar[d]
\\
U \otimes V'
\ar[r]
& W.
\end{tikzcd}
$$
Let us consider the following assertions:
\begin{enumerate}
    \item for any cofibration $f: X \to Y$ in $\categ M$, the map
    $$
    Q_{U'}(X) \coprod_{Q_U(X)} Q_U(Y) \to Q_{U'}(Y);
    $$
    is a cofibration that is acyclic if $f$ is acyclic;
    \item for any cofibration $f: X \to Y$ in $\categ M$, the map
    $$
    Q_{U'}(X) \coprod_{Q_U(X)} Q_U(Y) \to Q_{U'}(Y);
    $$
    is an acyclic cofibration;
    \item for any cofibration $f: X \to Y$ in $\categ M$, the map
    $$
    Q_{V'}(X) \coprod_{Q_V(X)} Q_V(Y) \to Q_{V'}(Y);
    $$
    is a cofibration that is acyclic if $f$ is acyclic;
    \item for any cofibration $f: X \to Y$ in $\categ M$, the map
    $$
    Q_{V'}(X) \coprod_{Q_V(X)} Q_V(Y) \to Q_{V'}(Y);
    $$
    is an acyclic cofibration.
\end{enumerate}

\begin{lemma}\label{lemmabigcube}
Let us suppose that the cylinder object $Q_1$ preserves finite colimits.
\begin{itemize}
    \itemt If the assertions (1) and (3) just above are true, then for any cofibration $f : X \to Y$ of $\categ M$, the map
$$
Q_{U' \otimes V'}(X) \coprod_{Q_W (X)} Q_W (Y) 
\to Q_{U' \otimes V'}(Y)
$$
is a cofibration that is acyclic if $f$ is acyclic.
\itemt  If the assertions (1) and (4) just above are true or if the assertions (2) and (3) just above are true, then for any cofibration $f : X \to Y$ of $\categ M$, the map
$$
Q_{U' \otimes V'}(X) \coprod_{Q_W (X)} Q_W (Y) 
\to Q_{U' \otimes V'}(Y)
$$
is an acyclic cofibration.
\end{itemize}
\end{lemma}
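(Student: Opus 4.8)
The strategy is to recognise the three maps occurring in the hypotheses and in the conclusion as instances of a single pushout--product construction for the action $Q$, and then to exploit its associativity. For a map $h : A \to B$ of finite cubical sets and a map $f : X \to Y$ of $\categ M$, write $h \boxdot f$ for the canonical comparison map
$$
Q_B(X) \coprod_{Q_A(X)} Q_A(Y) \longrightarrow Q_B(Y).
$$
With this notation, assertions (1) and (2) are statements about $g \boxdot f$, assertions (3) and (4) are statements about $k \boxdot f$, and the conclusion of the lemma is the statement about $\ell \boxdot f$, where $\ell : W \to U' \otimes V'$ denotes the pushout--product map of $g$ and $k$ in cubical sets.

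The heart of the argument is the natural associativity isomorphism of arrows of $\categ M$
$$
\ell \boxdot f \;\cong\; g \boxdot (k \boxdot f),
$$
valid for every map $f$. To establish it I would invoke that $Q_1$ preserves finite colimits: by the proof of the preceding lemma every $Q_n$, and hence every $Q_A$ for $A$ finite, preserves finite colimits in its variable, the assignment $A \mapsto Q_A$ is monoidal so that $Q_{A \otimes B} = Q_A Q_B$, and this assignment preserves finite colimits in $A$ (being given by a colimit formula), whence $Q_W \cong Q_{U' \otimes V} \coprod_{Q_{U \otimes V}} Q_{U \otimes V'}$. The target of $g \boxdot (k \boxdot f)$ is $Q_{U'} Q_{V'}(Y) = Q_{U' \otimes V'}(Y)$, which is the target of $\ell \boxdot f$. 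For the source, one expands $Q_{U'}$ and $Q_U$ across the pushout defining the source of $k \boxdot f$ (using that they preserve pushouts) and reorganises the resulting iterated colimit; the attendant cube of pushouts collapses to $Q_{U' \otimes V'}(X) \coprod_{Q_W(X)} Q_W(Y)$, the source of $\ell \boxdot f$. This colimit bookkeeping --- the associativity of the Leibniz construction for $Q$ --- is the only genuine computation, and it is the step I expect to be the main obstacle; everything after it is formal.

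Granting the isomorphism, the two bullets follow by feeding the hypotheses through $\boxdot$ in the order $k$ first, then $g$. Suppose (1) and (3) hold. By (3) the map $k \boxdot f$ is a cofibration that is acyclic whenever $f$ is; applying (1) to this cofibration shows that $g \boxdot (k \boxdot f) \cong \ell \boxdot f$ is a cofibration, and it is acyclic whenever $k \boxdot f$ is, hence whenever $f$ is. This is exactly the first bullet.

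For the second bullet, suppose first that (1) and (4) hold. By (4) the map $k \boxdot f$ is an acyclic cofibration, and applying (1) to it yields a cofibration $g \boxdot (k \boxdot f)$ that is acyclic because $k \boxdot f$ is; thus $\ell \boxdot f$ is an acyclic cofibration. Alternatively, suppose (2) and (3) hold. By (3) the map $k \boxdot f$ is a cofibration, and applying (2) to this cofibration directly yields that $g \boxdot (k \boxdot f) \cong \ell \boxdot f$ is an acyclic cofibration. In either case $\ell \boxdot f$ is an acyclic cofibration, as required.
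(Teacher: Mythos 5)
Your proposal is correct and follows essentially the same route as the paper: the paper's proof forms the cube on $Q_{U}Q_{V}, Q_{U'}Q_{V}, Q_{U}Q_{V'}, Q_{U'}Q_{V'}$ applied to $X$ and $Y$, rewrites the colimit $Z$ of the punctured cube both as $Q_{U'}(A)\coprod_{Q_U(A)} Q_{U}Q_{V'}(Y)$ (with $A$ the source of your $k \boxdot f$) and as $Q_{U'\otimes V'}(X)\coprod_{Q_W(X)} Q_W(Y)$, which is precisely your associativity isomorphism $\ell \boxdot f \cong g \boxdot (k \boxdot f)$, and then applies hypotheses (3)/(4) followed by (1)/(2) in exactly the order you describe.
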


\begin{proof}
Let us consider a map $f: X \to Y$ in $\categ C$ and the following cube diagram
$$
    \begin{tikzcd}
    & Q_{U}(Q_{V'} Y)
    \ar[rr]
    && Q_{U'} Q_{V'} (Y)
    \\
    Q_{U}( Q_V(Y))
    \ar[ru] \ar[rr]
    && Q_{U'} (Q_V(Y))
    \ar[ru]
    \\
    & Q_{U}( Q_{V'} X)
    \ar[rr]\ar[uu]
    && Q_{U'} Q_{V'}(X) \ar[uu]
    \\
    Q_{U} (Q_V(X))
    \ar[rr] \ar[uu] \ar[ru]
    && Q_{U'} (Q_V(X))
    \ar[ru] \ar[uu]    
    \end{tikzcd}
$$
Let us denote $Z$ the colimit of this diagram without its final object $Q_{U'} Q_{V'}(Y)$ and let us denote
$$
A= Q_{V'}(X) \coprod_{Q_V(X)} Q_V(Y) .
$$
We can notice that the pushouts of the bottom and top faces of the cube are respectively $Q_W(X)$ and $Q_W(Y)$.

The morphism from $Z$ to $Q_{U'} Q_{V'}(Y)$ may be rewritten in different ways which yield in particular to the following commutative square
$$
\begin{tikzcd}
Z 
\ar[r, equal]
\ar[d, equal]
& Q_{U'}(A) \coprod_{Q_U(A)}  Q_{U} Q_{V'} (Y)
\ar[r]
& Q_{U'} Q_{V'} (Y)
\\
Q_{U' \otimes V'} (X) \coprod_{Q_W(X)}  Q_{W}(Y)
\ar[rr]
&& Q_{U' \otimes V'} (Y)
\ar[u, "\simeq"']
\end{tikzcd}
$$
Let us suppose that the assertions (1) and (3) are true. If $f$ is a cofibration (resp. an acyclic cofibration), then, by hypothesis (3), the map $A \to Q_{V'}(Y)$ is a cofibration (resp. an acyclic cofibration). Then, by hypothesis (1), the right top horizontal map in this square diagram is a cofibration (resp. an acyclic cofibration). Hence, the bottom horizontal map is also a cofibration (resp. an acyclic cofibration).

Now, let us suppose that the assertions (1) and (4) are true. If $f$ is a cofibration, then, by hypothesis (4), the map $A \to Q_{V'}(Y)$ is an acyclic cofibration, and by hypothesis (1), the right top horizontal map in this square diagram is an acyclic cofibration. Hence, the bottom horizontal map is also an acyclic cofibration.

The case where the assertions (2) and (3) are true, follows from the same arguments.
\end{proof}

\begin{proposition}\label{propcharacterisationcoherent}
Let us suppose that $Q_1$ preserves finite colimits. Then $Q_{\leq 1}$ is coherent if and only if the following conditions are satisfied:
\begin{enumerate}
    \item for any cofibration $f : X \to Y$, the map
    $$
    Q_1(X) \coprod_{(X \sqcup X)} (Y \sqcup Y) \to Q_1(Y)
    $$
    is a cofibration that is acyclic if $f$ is acyclic
    \item for any cofibration $f : X \to Y$ and for any of the two coface morphisms from $\square[0]$ to $\square[1]$, the induced morphism
    $$
    Q_1 (X) \coprod_X Y \to Q_1(Y)
    $$
    is an acyclic cofibration.
\end{enumerate}
\end{proposition}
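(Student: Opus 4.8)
The \emph{only if} direction is immediate. If $Q_{\leq 1}$ is coherent then the induced square structure is homotopical, so the pullback–corner condition (1) of the definition of homotopical holds; by the Proposition above giving four equivalent formulations of that condition, assertion (3) there holds for every $n$. Conditions (1) and (2) of the present statement are precisely its instances at $n=1$: since $Q_{\leq 1}$ is a natural cylinder we have $Q_0=\id$, whence $Q_{\partial\square[1]}=(-)\sqcup(-)$ and $Q_{\sqcap^{0,\epsilon}[1]}=\id$, so the Leibniz map for $\partial\square[1]\to\square[1]$ is $Q_1(X)\coprod_{X\sqcup X}(Y\sqcup Y)\to Q_1(Y)$ and the Leibniz map for the two endpoint horns $\sqcap^{0,\epsilon}[1]\to\square[1]$ is $Q_1(X)\coprod_X Y\to Q_1(Y)$.

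For the converse the plan is to bootstrap these $n=1$ conditions to all $n$. Again using $Q_0=\id$, conditions (2) and (3) in the definition of homotopical are automatic ($Q_0(X)\to X$ is the identity, and $Q_0(\emptyset)=\emptyset$ is cofibrant), so coherence reduces to the pullback–corner condition, equivalently to assertion (3) of the characterization. For a cubical monomorphism $j:A\to B$ write $P(j)$ for the property ``for every cofibration $f:X\to Y$ in $\categ M$ the map $Q_B(X)\coprod_{Q_A(X)}Q_A(Y)\to Q_B(Y)$ is a cofibration, acyclic if $f$ is'' and $P^{ac}(j)$ for the stronger property that this map is always an acyclic cofibration. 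Then assertion (3) is exactly $P(\partial\square[n]\to\square[n])$ for all $n$ together with $P^{ac}(\sqcap^{k,\epsilon}[n]\to\square[n])$ for all generating acyclic cofibrations.

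The two structural inputs are as follows. First, by the Lemma above, the hypothesis that $Q_1$ preserves finite colimits yields the monoidal identification $Q_{A\otimes B}\cong Q_A Q_B$. Second, the generators of $\square-\Set$ decompose as pushout-products (Leibniz products $\hat\otimes$) of the generators of $\square[1]$: the boundary $\partial\square[n]\to\square[n]$ is the $n$-fold pushout-product of $\partial\square[1]\to\square[1]$, while for $0\le k\le n-1$ the horn $\sqcap^{k,\epsilon}[n]\to\square[n]$ is the pushout-product of an endpoint inclusion $\square[0]\to\square[1]$ (in coordinate $k$) with $\partial\square[n-1]\to\square[n-1]$. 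Under the identification $Q_{A\otimes B}\cong Q_A Q_B$, Lemma \ref{lemmabigcube} reads exactly as closure of these properties under $\hat\otimes$: its first bullet gives $P(j_1)\wedge P(j_2)\Rightarrow P(j_1\hat\otimes j_2)$, and its second bullet gives $P(j_1)\wedge P^{ac}(j_2)\Rightarrow P^{ac}(j_1\hat\otimes j_2)$ and symmetrically, because the pushout $W$ and the map out of it appearing in that Lemma are precisely the source and the Leibniz map of $j_1\hat\otimes j_2$. With this the converse is a short induction on $n$: the base case $n=1$ is conditions (1) and (2); the step deduces $P(\partial\square[n]\to\square[n])$ from $P(\partial\square[n-1]\to\square[n-1])$ and $P(\partial\square[1]\to\square[1])$ via the first bullet, and deduces $P^{ac}(\sqcap^{k,\epsilon}[n]\to\square[n])$ from $P^{ac}(\square[0]\to\square[1])$ and the already-established $P(\partial\square[n-1]\to\square[n-1])$ via the second bullet. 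This is assertion (3) for all $n$, hence coherence.

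The step I expect to be the main obstacle is the combinatorial identification just invoked: verifying that the boundary and horn inclusions really are the asserted iterated pushout-products, and that the pushout $W$ of Lemma \ref{lemmabigcube} is carried, under $Q_{A\otimes B}\cong Q_A Q_B$, to the source of the corresponding Leibniz map (so that the hypotheses (1)--(4) feeding that Lemma are literally $P$ and $P^{ac}$ of the two factors). This is a careful but routine exercise in the combinatorics of the cube category; once it is in place, the remainder is a formal application of the two bullets of Lemma \ref{lemmabigcube}.
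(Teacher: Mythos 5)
Your overall strategy coincides with the paper's: reduce coherence to assertion (3) of the equivalence proposition (using $Q_0=\id$ to dispose of the other two axioms of an homotopical square structure), invoke the monoidal identification $Q_{A\otimes B}\cong Q_AQ_B$ from the preceding lemma, and read the two bullets of Lemma \ref{lemmabigcube} as closure of the properties $P$ and $P^{ac}$ under pushout-product. Your boundary induction is exactly the paper's family of assertions $(C_n)$. However, there is a genuine gap in the horn step, located precisely at the point you flagged as ``routine'': the claimed decomposition of $\sqcap^{k,\epsilon}[n]\to\square[n]$ as a \emph{two-fold} pushout-product of an endpoint inclusion ``in coordinate $k$'' with $\partial\square[n-1]\to\square[n-1]$ does not exist when $0<k<n-1$. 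The cube category of this paper is generated by faces and degeneracies only; its monoidal structure, and the induced Day convolution structure on cubical sets, carries no symmetry, so coordinates cannot be permuted and ``the remaining $n-1$ coordinates'' do not assemble into a single tensor factor $\square[n-1]$: they sit as $\square[k]\otimes\square[n-k-1]$ on either side of the distinguished $\square[1]$. The correct statement is a three-fold decomposition $(\partial\square[k]\to\square[k])\,\hat{\otimes}\,(\square[0]\to\square[1])\,\hat{\otimes}\,(\partial\square[n-k-1]\to\square[n-k-1])$, to which the two-variable Lemma \ref{lemmabigcube} does not directly apply. As written, your induction therefore establishes the acyclicity condition only for the extreme horns $k=0$ and $k=n-1$, while assertion (3) requires it for all generating acyclic cofibrations.

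The paper repairs exactly this defect by inserting an intermediate family of assertions. First it proves $(AL_n)$, the horn condition for the last-coordinate horns $\sqcap^{n-1,\epsilon}[n]$ only, using the genuine two-fold decomposition $(\partial\square[n-1]\to\square[n-1])\,\hat{\otimes}\,(\square[0]\to\square[1])$ together with $(C_{n-1})$ and your hypothesis (2). Then it treats a general horn via the two-fold decomposition of $\sqcap^{i-1,\epsilon}[n]\to\square[n]$ as $(\sqcap^{i-1,\epsilon}[i]\to\square[i])\,\hat{\otimes}\,(\partial\square[n-i]\to\square[n-i])$, feeding $(AL_i)$ and $(C_{n-i})$ into the second bullet of Lemma \ref{lemmabigcube}. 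In other words, the three-fold pushout-product is processed in two stages, grouped left to right, each stage being a legitimate two-variable application of the lemma. Your argument becomes correct once the single horn step is replaced by these two steps; without them, the middle-coordinate horns are not covered and coherence does not follow.
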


\begin{proof}
The only if direction is clear. Let us prove the if direction.

Let us consider the following assumptions that depends on a natural integer $n \geq 1$.
\begin{itemize}
    \item[$(C_n)$] For any cofibration $f : X \to Y$ in $\categ M$ the map
$$
    Q_n(X) \coprod_{Q_{\partial\square[n]}(X)} Q_{\partial\square[n]}(Y) \to Q_n(Y)
$$
is a cofibration that is acyclic if $f$ is acyclic.
    \item[$(AL_n)$] For any cofibration $f : X \to Y$ in $\categ M$ and for any $\epsilon \in \{0,1\}$, the map
$$
    Q_n(X) \coprod_{Q_{\sqcap^{n-1, \epsilon}[n]}(X)} Q_{\sqcap^{n-1, \epsilon}[n]}(Y) \to Q_n(Y)
$$
    is an acyclic cofibration.
    \item[$(A_n)$] For any cofibration $f : X \to Y$ in $\categ M$, any $\epsilon \in \{0,1\}$ and any $0 \leq i \leq n-1$, the map
$$
    Q_n(X) \coprod_{Q_{\sqcap^{i, \epsilon}[n]}(X)} Q_{\sqcap^{i, \epsilon}[n]}(Y) \to Q_n(Y) 
$$
    is an acyclic cofibration.
\end{itemize}

Let us prove $(C_n)$. It is true for $n=0$ and $n=1$. Then, using Lemma \ref{lemmabigcube} and hypothesis (1) combined with the fact that for any $n \geq 1$, the following diagram in cubical sets is a pushout
$$
\begin{tikzcd}
{\partial\square[n] \otimes \partial\square[1]}
\ar[r] \ar[d]
& {\square[n] \otimes \partial\square[1]}
\ar[d]
\\
{\partial\square[n] \otimes \square[1]}
\ar[r]
& {\partial\square[n+1]}
\end{tikzcd}
$$
a straightforward induction shows that $C_n$ is true for any $n \geq 0$.

Then, the assertions $(AL_n), n \geq 1$ follow, using Lemma \ref{lemmabigcube}, from $(C_{k})_{k \geq 0}$ combined with hypothesis (2) and the fact that the following square is a pushout
$$
\begin{tikzcd}
{\partial\square[n-1] \otimes \square[0]}
\ar[r] \ar[d]
& {\partial\square[n-1] \otimes \square[1]}
\ar[d]
\\
{\square[n-1] \otimes \square[0]}
\ar[r]
& {\sqcap^{n-1, \epsilon}[n]}
\end{tikzcd}
$$
for any $n \geq 1$, $\epsilon \in \{0,1\}$.

Finally, $(A_n)$ follows from Lemma \ref{lemmabigcube} combined with $(C_k)_{k \geq 0}$ and $(AL_k)_{k \geq 1}$ and the fact that the following diagram is a pushout 
$$
\begin{tikzcd}
{\sqcap^{i-1, \epsilon}[i] \otimes \partial\square[n-i]}
\ar[r] \ar[d]
& {\square[i] \otimes \partial\square[n-i]}
\ar[d]
\\
{\sqcap^{i-1, \epsilon}[i] \otimes \square[n-i]}
\ar[r]
& {\sqcap^{i-1, \epsilon}[n]}
\end{tikzcd}
$$
for any $n \geq 1$, $\epsilon \in \{0,1\}$ and $1 \leq i \leq n$.
\end{proof}

\begin{corollary}
Let us suppose that $Q_1$ preserves finite colimits. Then $Q_{\leq 1}$ is coherent if and only if the following conditions are satisfied:
\begin{enumerate}
    \item for any cofibration $f : X \to Y$, the map
    $$
    Q_1(X) \coprod_{(X \sqcup X)} (Y \sqcup Y) \to Q_1(Y)
    $$
    is a cofibration
    \item for any cofibration $f : X \to Y$ and for any of the two coface morphisms from $\square[0]$ to $\square[1]$, the induced morphism
    $$
    Q_1 (X) \coprod_X Y \to Q_1(Y)
    $$
    is a weak equivalence;
    \item for any acyclic cofibration $f: X \to Y$, the map $Q_1(f)$ is a weak equivalence.
\end{enumerate}
\end{corollary}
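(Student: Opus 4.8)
The plan is to show that conditions (1)--(3) of the corollary are equivalent to conditions (1)--(2) of Proposition \ref{propcharacterisationcoherent}, and then to quote that proposition. Write $\theta_f$ for the map $Q_1(X)\coprod_{X\sqcup X}(Y\sqcup Y)\to Q_1(Y)$ appearing in condition (1), and $\kappa_f$ for the map $Q_1(X)\coprod_X Y\to Q_1(Y)$ appearing in condition (2), for a fixed coface $\square[0]\to\square[1]$. Since $Q_0=\id$, these are literally the maps occurring in both statements, so the whole problem is to compare the two lists of homotopical conditions on $\theta_f$ and $\kappa_f$.

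The geometric input I would isolate is two factorisations through $\theta_f$, both read off from the relevant colimits (here I use the hypothesis that $Q_1$ preserves finite colimits, so that the pushouts commute with the $Q_n$ and $A\mapsto Q_A$ is monoidal). First, gluing the end of the cylinder that $\kappa_f$ leaves unglued produces a map $Q_1(X)\coprod_X Y\to Q_1(X)\coprod_{X\sqcup X}(Y\sqcup Y)$ which is a cobase change of $f\colon X\to Y$ along an end-inclusion, and $\kappa_f$ is the composite of this map with $\theta_f$. Second, the canonical map $Q_1(X)\to Q_1(X)\coprod_{X\sqcup X}(Y\sqcup Y)$ is a cobase change of the coproduct $f\sqcup f\colon X\sqcup X\to Y\sqcup Y$, and $Q_1(f)$ is the composite of this map with $\theta_f$. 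I would also record that $f\sqcup f$ and the single end-inclusion cobase change are cofibrations (coproducts, respectively cobase changes, of cofibrations), and are acyclic cofibrations whenever $f$ is one.

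For the \emph{only if} direction I would invoke Proposition \ref{propcharacterisationcoherent}: coherence makes $\theta_f$ a cofibration, giving corollary condition (1), and makes $\kappa_f$ an acyclic cofibration, hence a weak equivalence, giving corollary condition (2). For corollary condition (3), take $f$ an acyclic cofibration; in the second factorisation the first map is then an acyclic cofibration and $\theta_f$ is a weak equivalence by condition (1) of the proposition, so $Q_1(f)$ is a composite of weak equivalences. Conversely, assume the three corollary conditions. Corollary condition (1) is exactly the cofibration half of proposition condition (1). For proposition condition (2), the first factorisation writes $\kappa_f$ as a cobase change of $f$ (a cofibration) followed by $\theta_f$ (a cofibration by corollary condition (1)), hence a cofibration; together with corollary condition (2) it is an acyclic cofibration. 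Finally, for the acyclic half of proposition condition (1), let $f$ be an acyclic cofibration: in the second factorisation the first map is an acyclic cofibration and $Q_1(f)$ is a weak equivalence by corollary condition (3), so $\theta_f$ is a weak equivalence by two-out-of-three, and with corollary condition (1) it is an acyclic cofibration. Both conditions of Proposition \ref{propcharacterisationcoherent} then hold, so $Q_{\leq 1}$ is coherent.

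The only genuinely delicate point is pinning down the two factorisations, i.e.\ the pushout-product bookkeeping identifying $\kappa_f$ and $Q_1(f)$ as composites ending in $\theta_f$ whose first factors are cobase changes of $f$, respectively of $f\sqcup f$. Once these colimit identifications are in place, the remainder is the routine stability of (acyclic) cofibrations under coproducts and cobase change, combined with two-out-of-three.
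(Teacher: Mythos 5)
Your proof is correct and takes essentially the same route as the paper: both directions are reduced to Proposition \ref{propcharacterisationcoherent}, with the key steps being the factorisations of $Q_1(f)$ and of $Q_1(X)\coprod_X Y \to Q_1(Y)$ through the pushout-product map $Q_1(X)\coprod_{X\sqcup X}(Y\sqcup Y)\to Q_1(Y)$, plus stability of (acyclic) cofibrations under cobase change and two-out-of-three. The only (cosmetic) difference is that the paper obtains the cofibration half of the proposition's condition (2) by invoking the Reedy-cofibrancy argument from the proposition's proof, whereas you verify it directly at cubical level $1$ via your factorisation (a).
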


\begin{proof}
The only if direction is clear. Let us prove the if direction.

We can first notice that combining hypothesis (1) and hypothesis (3), we obtain the hypothesis (1) of Proposition \ref{propcharacterisationcoherent}, which implies (using the same arguments as in the proof of this proposition) that the functor $Q^t : \categ M \to \Fun{\square}{\categ{M}}$ sends (acyclic) cofibration to Reedy (acyclic) cofibrations.
This fact combined with hypothesis (2) implies that the hypothesis (2) of Proposition \ref{propcharacterisationcoherent} is satisfied. Hence $Q_{\leq 1}$ is coherent.
\end{proof}

\subsection{Squared Quillen adjunction}

\begin{definition}
A squared Quillen adjunction between be two model categories $\categ M, \categ N$ equipped with homotopical left (or right) square structures is a squared adjunction
$$
\begin{tikzcd}
\categ M
\ar[rr, shift left, "L"]
&& \categ N
\ar[ll, shift left, "R"]
\end{tikzcd}
$$
that is also a Quillen adjunction.
\end{definition}

\begin{example}
Let $\categ M$ be a model category equipped with an homotopical right square structure $P$.

Let us consider a small category $\categ J$. Let us suppose that the category $\Fun{\categ J}{\categ{M}}$ admits the projective model structure, whose fibrations and weak equivalences are morphisms of diagrams that are objectwise respectively fibrations and weak equivalences of $\categ M$.

Then, the model category $\Fun{\categ J}{\categ{M}}$ admits an homotopical right square structure $P_{\categ J}$ defined by
$(P_{\categ J}^{(n)}F)(X) = P^{(n)}(F(X))$ and the adjunction
$$
\begin{tikzcd}
\categ{M}^{\mathrm{Ob}(\categ J)}
\ar[rr, shift left]
&& \Fun{\categ J}{\categ{M}}
\ar[ll, shift left]
\end{tikzcd}
$$
becomes a squared Quillen adjunction. Moreover, if the category $\categ M$ has colimits indexed by $\categ J$ the adjunction
$$
\begin{tikzcd}
\Fun{\categ J}{\categ{M}}
\ar[rr, shift left,"\varinjlim_J"]
&& \categ M .
\ar[ll, shift left,"\mathrm{cst}"]
\end{tikzcd}
$$
is also a squared Quillen adjunction.
\end{example}

\subsection{The example of chain complexes}

Let $\mathcal A$ be a commutative ring. The category of chain complexes of $\mathcal A$-modules admits a model structure whose
\begin{itemize}
    \itemt weak equivalences are quasi-isomorphisms;
    \itemt fibrations are degreewise surjections;
    \itemt cofibrations are degreewise injections whose cokernel are projective.
\end{itemize}
It then forms a closed symmetric monoidal model category.

Let $J$ be the cellular model of the interval in chain complexes of $\mathcal A$-modules, that is
$$
\begin{cases}
J_0 = \mathcal{A} \cdot |0\rangle \oplus \mathcal{A} \cdot |1\rangle
\\
J_1 =\mathcal{A} \cdot |01\rangle
\\
d(|01\rangle) = |1\rangle - |0\rangle.
\end{cases}
$$

\begin{lemma}\cite{BergerFresse04}
Let $\operad{BE}$ be the Barratt-Eccles operad. There exists a functor from $\square_{\leq 1}$ to the monoidal category of $\operad{BE}$-coalgebras so that the composite functor
$$
\square_{\leq 1} \to \catofcog{BE} \to \categ{Ch}_{\mathcal A}
$$
is the cylinder
$$
\mathcal{A} \oplus \mathcal{A} \to J \to \mathcal{A}.
$$
\end{lemma}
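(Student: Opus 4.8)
The plan is to build the functor by restricting to the simplices $\Delta[0]$ and $\Delta[1]$ the natural $\operad{BE}$-coalgebra structure carried by the normalized chains of an arbitrary simplicial set, as constructed by Berger and Fresse.

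First I would invoke the central result of \cite{BergerFresse04}: the normalized chains functor lifts canonically to a functor $N_\ast \colon \sSet \to \catofcog{BE}$ whose composite with the forgetful functor $\catofcog{BE} \to \categ{Ch}_{\mathcal A}$ is the ordinary normalized chains functor $\sSet \to \categ{Ch}_{\mathcal A}$. Here one uses that $\operad{BE}$ is a Hopf operad, which both endows $\catofcog{BE}$ with the monoidal structure appearing in the statement (its unit being $N_\ast(\Delta[0]) = \mathcal A$) and makes $N_\ast$ functorial in the simplicial set. The existence and naturality of this $E_\infty$-coalgebra structure is exactly Berger and Fresse's theorem, which I would take as a black box.

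Next I would identify the source category. The assignment sending $\square[0]$ to $[0]$, $\square[1]$ to $[1]$, each coface $\delta^i$ to the vertex inclusion $[0]\to[1]$ with image $i$, and the codegeneracy $\sigma$ to the unique surjection $[1]\to[0]$ defines an isomorphism of $\square_{\leq 1}$ onto the full subcategory of the simplex category $\Delta$ spanned by $[0]$ and $[1]$. This is a finite verification: in both categories the only relation among the generators is $\sigma\delta^0 = \id = \sigma\delta^1$, and the endomorphism monoid of $\square[1]$ (resp. $[1]$) is $\{\id, \delta^0\sigma, \delta^1\sigma\}$, the three monotone self-maps of a two-element poset. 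Composing this isomorphism with the Yoneda embedding $[n]\mapsto\Delta[n]$ and then with $N_\ast$ yields the desired functor $\square_{\leq 1}\to\catofcog{BE}$.

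Finally I would check that the underlying diagram of chain complexes is the prescribed cylinder. One has $N_\ast(\Delta[0]) = \mathcal A$, while $N_\ast(\Delta[1])$ is concentrated in degrees $0$ and $1$, with the two vertices spanning degree $0$, the edge spanning degree $1$, and boundary $\partial\{01\} = \{1\}-\{0\}$; under the dictionary $|0\rangle = \{0\}$, $|1\rangle = \{1\}$, $|01\rangle = \{01\}$ this is precisely $J$. The coface $\delta^i$ then induces the endpoint map $\mathcal A\to J$ hitting $|i\rangle$, and the codegeneracy $\sigma$ induces the projection $J\to\mathcal A$, so the composite $\square_{\leq 1}\to\catofcog{BE}\to\categ{Ch}_{\mathcal A}$ is the cylinder $\mathcal A\oplus\mathcal A\to J\to\mathcal A$. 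The genuine mathematical content — the coherent system of cup-$i$ coproducts packaged into the $\operad{BE}$-coalgebra structure — is exactly what is being cited, so I expect no substantive obstacle: the only real care needed is in matching conventions, namely the orientation of the boundary and the labelling of the two endpoints $\delta^0,\delta^1$.
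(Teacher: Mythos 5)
Your proposal is correct and coincides with what the paper intends: the lemma is stated with no proof beyond the citation, and your argument is the right unpacking of \cite{BergerFresse04} --- the natural $\operad{BE}$-coalgebra structure on normalized chains (with $\catofcog{BE}$ monoidal because $\operad{BE}$ is a Hopf operad), restricted along the isomorphism of $\square_{\leq 1}$ with the full subcategory of $\Delta$ on $[0]$ and $[1]$, whose underlying diagram of chain complexes is exactly $\mathcal{A}\oplus\mathcal{A}\to J\to\mathcal{A}$. The convention checks you flag (orientation of $\partial|01\rangle$ and labelling of $\delta^0,\delta^1$) are the only details to match, and they come out as stated.
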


\begin{lemma}
The category of chain complexes $\categ{Ch}_{\mathcal A}$ admits a two sided square structure $(Q,P)$ so that 
$$
\begin{cases}
Q_n(X) = J^{\otimes n} \otimes X ;
\\
P^{(n)}(X) = [J^{\otimes n}, X].
\end{cases}
$$
\end{lemma}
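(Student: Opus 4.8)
The plan is to build the left square structure first, as the one associated to the interval $J$ through its cylinder, and then to supply the right adjoints using that $\categ{Ch}_{\mathcal{A}}$ is closed symmetric monoidal; the two-sided structure requires nothing more than existence of these adjoints.

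First I would turn the interval $J$ into a natural cylinder object. The preceding lemma provides a functor $\square_{\leq 1} \to \categ{Ch}_{\mathcal{A}}$ sending $\square[0]$ to $\mathcal{A}$ and $\square[1]$ to $J$, whose underlying diagram is the cylinder $\mathcal{A} \oplus \mathcal{A} \to J \to \mathcal{A}$. Post-composing with the functor $\categ{Ch}_{\mathcal{A}} \to \Fun{\categ{Ch}_{\mathcal{A}}}{\categ{Ch}_{\mathcal{A}}}$, $Y \mapsto Y \otimes (-)$, yields a functor $Q_{\leq 1} : \square_{\leq 1} \to \Fun{\categ{Ch}_{\mathcal{A}}}{\categ{Ch}_{\mathcal{A}}}$. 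Since $\mathcal{A}$ is the monoidal unit, the image of $\square[0]$ is $\mathcal{A} \otimes (-) \cong \id$, so $Q_{\leq 1}$ is a natural cylinder object with $Q_1 = J \otimes (-)$.

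Next I would invoke the proposition identifying natural cylinder objects with strict monoidal functors $\square \to \Fun{\categ{Ch}_{\mathcal{A}}}{\categ{Ch}_{\mathcal{A}}}$, hence with left square structures whose maps $\alpha_{n,m}$ and $\beta$ are isomorphisms. This produces a left square structure $Q$ extending $Q_{\leq 1}$. The one point requiring care is the identification of $Q_n$: because the monoidal product on $\square$ is $\square[n] \otimes \square[m] = \square[n+m]$ whereas the monoidal product on $\Fun{\categ{Ch}_{\mathcal{A}}}{\categ{Ch}_{\mathcal{A}}}$ is composition, strict monoidality gives $Q_n = Q_1^{\circ n}$, and associativity of $\otimes$ then yields $Q_n(X) = J^{\otimes n} \otimes X$, naturally in $X$.

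Finally I would produce the right adjoints. As $\categ{Ch}_{\mathcal{A}}$ is closed symmetric monoidal, each endofunctor $J^{\otimes n} \otimes (-)$ admits a right adjoint, namely the internal hom $[J^{\otimes n}, -]$. Since a two-sided square structure is by definition a left square structure all of whose functors $Q_n$ have right adjoints, this already completes the argument with $P^{(n)} = [J^{\otimes n}, -]$, the associated right square structure being determined automatically by passing to adjoints. I do not expect any genuine obstacle: the statement is a formal consequence of the monoidal and closed structure on chain complexes, the only bookkeeping being the identification $Q_n = J^{\otimes n} \otimes(-)$ described above.
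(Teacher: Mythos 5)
Your proposal is correct, and it is essentially the argument the paper has in mind: the paper's own proof of this lemma is just the word ``Straightforward,'' and your construction (cylinder on $J$ $\Rightarrow$ monoidal functor out of $\square$ $\Rightarrow$ left square structure $Q_n = J^{\otimes n}\otimes(-)$, with $P^{(n)} = [J^{\otimes n},-]$ supplied by the closed structure, which by the paper's definition of two-sided square structure is all that is needed) is the natural way to fill it in. The only bookkeeping caveat is that $\mathcal{A}\otimes(-)$ is only naturally isomorphic to $\id$, so strictly speaking you should use the non-strict monoidal variant in the paper's proposition on cylinders (taking $\beta$ to be the unitor), but this is exactly the kind of identification the lemma's statement already makes implicitly.
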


\begin{proof}
Straightforward.
\end{proof}

Now let $\operad O$ be an operad in chain complexes. One has an adjunction
$$
\begin{tikzcd}
\categ{Ch}_{\mathcal A}
\ar[rr, shift left, "T_O"]
&& \catofalg{O}
\ar[ll, shift left,"U^O"]
\end{tikzcd}
$$
relating $\operad O$-algebras to chain complexes.

\begin{lemma}\label{lemmapathalgebra}
Let us suppose that $\operad O$ is a retract of $\operad O \otimes_H \operad{BE}$ (where $\otimes_H$ denotes the Hadamard levelwise tensor product). Then the category of $O$-algebras in chain complexes admits a natural path object with an adjoint cylinder. Moreover the adjunction $T_O \dashv U^O$ is a squared adjunction.
\end{lemma}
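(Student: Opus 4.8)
The plan is to realise the path objects on $\operad O$-algebras as internal homs out of the $\operad{BE}$-coalgebras $J^{\otimes n}$, and to transport the two-sided square structure $(Q,P)$ of $\categ{Ch}_{\mathcal A}$ through the forgetful functor $U^O$. The starting point is that the Barratt--Eccles operad is a Hopf operad and that, by the lemma of Berger--Fresse recalled above, $J$---hence each $J^{\otimes n}$, and more generally $L_J(A)$ for any cubical set $A$---is a $\operad{BE}$-coalgebra (the symmetric monoidal and cocontinuous structure on $\catofcog{BE}$ being supplied by the Hopf structure). Given a $\operad{BE}$-coalgebra $C$ and an $\operad O$-algebra $M$, the internal hom $[C,M]$ carries a canonical $\operad O \otimes_H \operad{BE}$-algebra structure, whose arity-$n$ operation uses the $\operad{BE}(n)$-coaction $C \to C^{\otimes n}$ on the source together with the $\operad O(n)$-action on the target. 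I would then restrict this structure along the operad map $\operad O \to \operad O \otimes_H \operad{BE}$ provided by the retract, turning $[C,M]$ into an $\operad O$-algebra with unchanged underlying complex; in particular I set $P^{(n)}(M) = [J^{\otimes n}, M]$ and, more generally, $P_A(M) = [L_J(A), M]$.

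Since restriction along an operad map does not alter underlying complexes, one has $U^O P^{(n)} = P^{(n)} U^O$ on the nose, so $U^O$ strictly intertwines the $P$'s. The structure maps are induced by the identifications $J^{\otimes(n+m)} \cong J^{\otimes n} \otimes J^{\otimes m}$ and $J^{\otimes 0} = \mathcal A$ of $\operad{BE}$-coalgebras, and are in fact isomorphisms via the internal-hom adjunction $[C,[D,-]] \cong [C \otimes D,-]$, so that $P$ is a natural path object; their coherence axioms may be checked after applying the faithful functor $U^O$, where they reduce to the already-established identities for $(Q,P)$ on $\categ{Ch}_{\mathcal A}$. The retract identity $\operad O \to \operad O \otimes_H \operad{BE} \to \operad O$ enters precisely to force $P^{(0)} = \id$ as endofunctors of $\catofalg{O}$: the $\operad O$-structure induced on $[\mathcal A, M] = M$ has coaction factoring through the augmentation $\operad{BE} \to \operad{Com}$, and the retract makes the resulting restriction the original structure. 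Each $P^{(n)}$ preserves limits (as $U^O$ creates them and $[J^{\otimes n},-]$ is a right adjoint on $\categ{Ch}_{\mathcal A}$) and is accessible, so the adjoint functor theorem on the presentable category $\catofalg{O}$ produces left adjoints $Q_n$; this gives the two-sided structure and the adjoint cylinder. Because $U^O$ is a strict morphism of right square structures with left adjoint $T_O$, the formalism of transferred square structures upgrades $T_O \dashv U^O$ to a squared adjunction in $\categ{RSquare}_{\mathrm{oplax}}$.

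It remains to see that $P$ is homotopical. As $P^{(0)} = \id$, conditions (2) and (3) are immediate, and by the proposition relating the two adjoint structures it suffices to verify condition (1). I would use its reformulation in terms of cubical sets: for a cofibration $A \to B$ of cubical sets and a fibration $g : X \to Y$ of $\operad O$-algebras, the map $P_B(X) \to P_A(X) \times_{P_A(Y)} P_B(Y)$ must be a fibration, acyclic whenever $A \to B$ or $g$ is. Applying $U^O$ turns this into the pullback-hom map $[L_J B, U^O X] \to [L_J A, U^O X] \times_{[L_J A, U^O Y]} [L_J B, U^O Y]$ in $\categ{Ch}_{\mathcal A}$, associated to the cofibration $L_J A \to L_J B$ (a cofibration since $L_J$ is left Quillen) and the fibration $U^O g$. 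This map is a (resp. acyclic) fibration because $(Q,P)$ on $\categ{Ch}_{\mathcal A}$ is homotopical; since $U^O$ creates fibrations and acyclic fibrations in the transferred model structure on $\catofalg{O}$, the original map is one too.

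The main obstacle is the coherent construction of the first two paragraphs: checking that the objectwise $\operad O$-algebra structures on the internal homs, built from the Hopf comultiplication of $\operad{BE}$ and the retract, genuinely assemble into a right square structure (naturality in $\square$ and the associativity and unit axioms of $\alpha,\beta$), and confirming that the retract hypothesis is exactly what enforces $P^{(0)} = \id$. Once this bookkeeping is in place, the homotopy-theoretic content is a formal consequence of $U^O$ detecting fibrations and weak equivalences together with the homotopical square structure on chain complexes.
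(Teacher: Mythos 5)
Your core construction is the same as the paper's: it too obtains the path object $P^{(1)}(A) = [J,A] \simeq A \otimes J^\vee$ by using the $\operad{BE}$-coalgebra structure on $J$ (equivalently, the $\operad{BE}$-algebra structure on $J^\vee$) to produce an $\operad O \otimes_H \operad{BE}$-algebra structure, and then restricting along the operad morphism $\operad O \to \operad O \otimes_H \operad{BE}$ supplied by the retract; the squared adjunction is likewise read off from the fact that $U^O$ strictly intertwines the path functors. Where you genuinely differ is the adjoint cylinder: the paper writes the left adjoint of $P^{(1)}$ down explicitly, as the coequaliser of two maps $T_O (J \otimes \operad O \triangleleft (A)) \rightrightarrows T_O( J \otimes A)$, one induced by the structure map $\operad O \triangleleft (A) \to A$ and the other by the section $\operad O \to \operad O \otimes_H \operad{BE}$ together with the coaction maps $\operad{BE}(n) \otimes J \to J^{\otimes n}$, whereas you invoke the adjoint functor theorem, using that $P^{(n)}$ preserves limits and filtered colimits. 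Both are valid; your route is shorter but silently leans on local presentability of $\catofalg{O}$ (true here, but an extra input beyond the paper's minimal finite-(co)limit axioms for a model category), while the explicit coequaliser is more elementary and exhibits what the cylinder actually is. Two further remarks: your observation that the retract hypothesis is precisely what makes $P^{(0)} = \id$ on the nose (so that one gets a \emph{natural} path object in the paper's strict sense) is correct and is a point the paper leaves implicit; and your third paragraph, on $P$ being homotopical, is not part of this lemma at all --- it is the content of Proposition \ref{propositionch} --- though your argument for it (reduction along $U^O$, which creates fibrations and weak equivalences, to the pullback-hom property of chain complexes) matches the one used there.
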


\begin{proof}
Given a $\operad O$-algebra $A$, the chain complex
$$
P^{(1)}(A) = [J, A] \simeq A \otimes J^\vee
$$
has the structure structure of an $\operad O \otimes_H \operad{BE}$-algebra induced by the structure of an $\operad O$-algebra on $A$ and the structure of a $\operad{BE}$-algebra on $J^\vee$. Hence $A \otimes J^\vee$ has the structure of an $\operad O$-algebra, using the morphism of operads $\operad O  \to \operad O \otimes_H \operad{BE}$.
This gives us the natural path object $P^{(1)}$ and thus a right square structure. It is clear that the adjunction $T_O \dashv U^O$ is a squared adjunction.

Finally, $P^{(1)}$ has a left adjoint given as the coequaliser of two maps
$$
T_O (J \otimes \operad O \triangleleft (A)) \rightrightarrows T_O( J \otimes A) ;
$$
(where $\operad O \triangleleft (A) =  \bigoplus_n \operad O(n) \otimes_{\mathbb S_n} A^{\otimes n} = U^O T_O U^O(A)$)
defined as follows:
\begin{enumerate}
    \item the first map is just induced by the structural map $\operad O \triangleleft (A) \to A$;
    \item the second map is induced by the maps
    $$
    J \otimes \operad O(n) \otimes A^{\otimes n} \to
    J  \otimes \operad{BE}(n) \otimes \operad O(n) \otimes A^{\otimes n}
    \to
    J^{\otimes n} \otimes \operad O(n) \otimes A^{\otimes n} \simeq
    \operad O(n) \otimes (J \otimes A)^{\otimes n}.
    $$
\end{enumerate}
\end{proof}

\begin{proposition}\label{propositionch}
If $\operad O$ is a retract of $\operad O \otimes_H \operad{BE}$, then
there exists a combinatorial model structure on the category of $\operad O$-algebras transferred from that of chain complexes. Moreover, its natural path object from Lemma \ref{lemmapathalgebra} is coherent and the adjunction $T_O \dashv U^O$ is a squared Quillen adjunction.
\end{proposition}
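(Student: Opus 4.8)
The plan is to transfer everything from the symmetric monoidal model structure on $\categ{Ch}_{\mathcal A}$ along the free-forgetful adjunction $T_O \dashv U^O$, and then to read off coherence and the squared-Quillen property from the corresponding facts on chain complexes. To construct the transferred model structure I first note that $\catofalg O$ is monadic over $\categ{Ch}_{\mathcal A}$ through a finitary monad, since the free-algebra functor preserves filtered colimits (the tensor product does); hence $\catofalg O$ is locally presentable. I would then run the standard transfer theorem for cofibrantly generated model structures, taking as candidate generating (acyclic) cofibrations the images under $T_O$ of the generating (acyclic) cofibrations of $\categ{Ch}_{\mathcal A}$, and declaring a morphism $f$ to be a weak equivalence (resp. fibration) exactly when $U^O(f)$ is a quasi-isomorphism (resp. a degreewise surjection). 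Local presentability settles all smallness requirements, so the only substantial point is the acyclicity condition: every transfinite composite of pushouts of images under $T_O$ of generating acyclic cofibrations must be a weak equivalence. Here I would apply the path object argument. Because fibrations of $\categ{Ch}_{\mathcal A}$ are the degreewise surjections, every object of $\catofalg O$ is fibrant; and Lemma \ref{lemmapathalgebra} furnishes a functorial factorisation $A \to [J,A] \to A \times A$. Applying $U^O$, the first map is induced by the weak equivalence $J \xrightarrow{\sim} \mathcal A$ and so is a weak equivalence, while the second is induced by the cofibration $\mathcal A \oplus \mathcal A \hookrightarrow J$ and so is a fibration; this is precisely the data the path object argument requires to force the acyclicity condition. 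The resulting cofibrantly generated structure on the locally presentable category $\catofalg O$ is combinatorial.

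Next I would prove that the path object is coherent. Since $P^{(1)} = [J,-]$ is a right adjoint by Lemma \ref{lemmapathalgebra}, it preserves finite limits, so I may test coherence through the dual of the Corollary to Proposition \ref{propcharacterisationcoherent}, i.e. through its three conditions on fibrations, pullback-power maps and acyclic fibrations. The key point is that $U^O$ creates fibrations and weak equivalences, preserves all limits, and strictly intertwines the path functors, $U^O P^{(n)}_{\mathrm{alg}} = [J^{\otimes n},-]\circ U^O$. Each of the three conditions for $P_{\mathrm{alg}}$ on $\catofalg O$ is therefore detected by $U^O$ and reduces to the corresponding condition for the two-sided square structure $P^{(n)} = [J^{\otimes n},-]$ on $\categ{Ch}_{\mathcal A}$. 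That structure is homotopical: $\categ{Ch}_{\mathcal A}$ is a symmetric monoidal model category and $J$ is a cofibrant interval, so by the pushout-product and unit axioms $J \otimes -$ is a coherent cylinder (the three conditions of the Corollary are exactly instances of these axioms, using $[J,X]\simeq X\otimes J^\vee$ with $J^\vee$ a bounded complex of frees for the acyclic-fibration clause), whence $P=[J^{\otimes\bullet},-]$ is a homotopical right square structure. Transporting this along $U^O$ yields coherence on $\catofalg O$.

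Finally, Lemma \ref{lemmapathalgebra} already presents $T_O \dashv U^O$ as a squared adjunction, and by construction of the transfer $U^O$ preserves fibrations and acyclic fibrations, hence is right Quillen; so the adjunction is a squared Quillen adjunction between the two homotopical right square structures above. The main obstacle is the acyclicity condition of the first step: the whole transfer hinges on producing a good functorial path object, which is exactly where the hypothesis that $\operad O$ is a retract of $\operad O \otimes_H \operad{BE}$ enters (through Lemma \ref{lemmapathalgebra}); once that path object is in hand, the remaining statements are formal consequences of $U^O$ creating and reflecting the homotopical structure.
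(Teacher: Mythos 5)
Your proposal is correct and follows essentially the same route as the paper: the transferred combinatorial model structure is obtained via the standard transfer theorem whose acyclicity condition is settled by Quillen's path object argument using the path object of Lemma \ref{lemmapathalgebra} (all objects being fibrant), and coherence is then deduced by pushing the conditions down along $U^O$ to the coherent path object $[J,-]$ on chain complexes. The paper's proof is just a terse version of this same argument, leaving implicit the details (local presentability, fibrancy of all objects, detection of the dual coherence conditions by $U^O$) that you spell out.
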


\begin{proof}
Using standard results about transfer of model structures (\cite{Hovey99}),
in order to show that the category of $\operad O$-algebras admits such a model structure, it suffices to show that it admits a natural path object (in the model categorical sense). Such a path is provided by Lemma \ref{lemmapathalgebra}.
The fact that this path object is coherent proceeds from the fact that the construction $X \mapsto [J, X]$ provides a coherent path object of chain complexes.
\end{proof}

\begin{corollary}
If $\operad O$ is a planar operad, then
there exists a combinatorial model structure on the category of $\operad O$-algebras transferred from that of chain complexes. Moreover, it has a coherent cylinder-path objects. Then the adjunction $T_O \dashv U^O$ is a squared Quillen adjunction.
\end{corollary}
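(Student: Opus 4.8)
The plan is to reduce the statement to Proposition~\ref{propositionch} and Lemma~\ref{lemmapathalgebra}. Concretely, I would first verify that a planar operad $\operad O$ is a retract of $\operad O \otimes_H \operad{BE}$, then invoke Proposition~\ref{propositionch} to obtain the transferred model structure, the coherence of the path object, and the squared Quillen adjunction $T_O \dashv U^O$, and finally upgrade the coherent path object to a full coherent cylinder--path pair.

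For the retract, the projection is immediate: the augmentation $\epsilon : \operad{BE} \to \Com$ is a morphism of operads and $\Com$ is the unit for $\otimes_H$, so
$$
p = \id \otimes \epsilon : \operad O \otimes_H \operad{BE} \to \operad O \otimes_H \Com = \operad O .
$$
To build a section I would use planarity to write $\operad O(n) = \mathcal O(n) \otimes \mathcal A[\mathbb{S}_n]$ for an underlying non-symmetric operad $\mathcal O$, with $\mathbb{S}_n$ acting freely on the second factor as in $\operad{As}(n) = \mathcal A[\mathbb{S}_n]$. Let $\iota : \operad{As} \to \operad{BE}$ be the canonical morphism of operads given by the inclusion of $0$-chains $\operad{BE}(n)_0 = \mathcal A[\mathbb{S}_n]$ (\cite{BergerFresse04}), which satisfies $\epsilon \circ \iota = \mathrm{aug}$ for the augmentation $\mathrm{aug} : \operad{As} \to \Com$. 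Using the group-like diagonal $\Delta : \sigma \mapsto \sigma \otimes \sigma$ on the permutation factor, I would define
$$
s = (\id \otimes \iota) \circ (\id \otimes \Delta) : \operad O \to \operad O \otimes_H \operad{As} \to \operad O \otimes_H \operad{BE},\qquad a \otimes \sigma \mapsto a \otimes \sigma \otimes \iota(\sigma).
$$
Then $p \circ s = \id$ follows from the counit identity $(\id \otimes \mathrm{aug}) \circ \Delta = \id$, so $\operad O$ is a retract of $\operad O \otimes_H \operad{BE}$.

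Granting this, Proposition~\ref{propositionch} yields the combinatorial model structure on $\catofalg{O}$, the coherence of the natural path object $P^{(1)}$ of Lemma~\ref{lemmapathalgebra}, and the fact that $T_O \dashv U^O$ is a squared Quillen adjunction. To obtain a pair, I would invoke Lemma~\ref{lemmapathalgebra} once more: $P^{(1)}$ admits a left adjoint $Q_1$, so $\catofalg{O}$ carries a two-sided square structure $(Q,P)$ with $P^{(n)}(A) = [J^{\otimes n}, A]$, whence $P^{(0)}(A) = [\mathcal A, A] = A$ and the map $\id \to P^{(0)}$ (equivalently $Q_0 \to \id$) is an isomorphism. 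Coherence of $P^{(1)}$ means the right square structure $P$ is homotopical; since $Q_0 \to \id$ is an isomorphism, the proposition comparing the two sides of a two-sided square structure forces $Q$ to be homotopical as well, i.e. the adjoint cylinder $Q_{\leq 1}$ is coherent. Thus $(Q_{\leq 1}, P^{(\leq 1)})$ is a coherent cylinder--path pair.

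The main obstacle is the verification that the section $s$ is genuinely a morphism of operads, and not merely an arity-wise equivariant splitting. This is exactly where planarity enters: the free $\mathbb{S}_n$-action coming from $\mathcal O$ makes the group-like diagonal $\Delta$ well defined, and the compatibility of $s$ with operadic composition reduces, after computing both sides of $s(x \circ_i y) = s(x) \circ_i s(y)$, to the single identity $\iota(\sigma \circ_i \tau) = \iota(\sigma) \circ_i \iota(\tau)$, i.e. to the multiplicativity of $\iota$; the contributions in the $\operad O$-factor agree on the nose because they are computed by the same composition in $\operad O$.
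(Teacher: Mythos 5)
Your argument is correct, but it follows a genuinely different route from the paper's. The paper never verifies the retract hypothesis at all: its proof re-runs the arguments of Lemma \ref{lemmapathalgebra} and Proposition \ref{propositionch} entirely inside planar operads, using that $J$ is canonically a $\operad{uAs}$-coalgebra (a counital coassociative coalgebra, via the Alexander--Whitney diagonal) and that $\operad{uAs}$ is the unit of the levelwise tensor product of planar operads, so the analogue of the retract condition becomes the tautology $\operad O = \operad O \otimes_H \operad{uAs}$ and the Barratt--Eccles operad drops out completely. You instead stay in the symmetric setting: you replace the planar operad by its symmetrization $\operad O(n) = \mathcal O(n) \otimes \mathcal A[\mathbb S_n]$ (same category of algebras, same adjunction $T_O \dashv U^O$) and check the literal hypothesis of Proposition \ref{propositionch} by producing the section $s : a \otimes \sigma \mapsto a \otimes \sigma \otimes \iota(\sigma)$ of $p = \id \otimes \epsilon$, where $\iota : \As \to \operad{BE}$ is the inclusion of degree-zero chains. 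This is sound: $\iota$ is a morphism of dg operads (degree-zero chains of $E\Sigma_n$ compose by the permutation operad and carry no differential), equivariance of $s$ holds because the $\mathbb S_n$-action on $\operad{BE}(n)_0$ is right multiplication, compatibility with $\circ_i$ reduces, as you say, to multiplicativity of $\iota$, and $p \circ s = \id$ is the counit identity; moreover, what the proof of Lemma \ref{lemmapathalgebra} actually uses is a section of $\id \otimes \epsilon$ specifically (so that the $\operad O$-algebra structure induced on $[\mathcal A, A] = A$ is the original one), and that is exactly what you construct. Comparing the two: the paper's route is shorter and avoids $\operad{BE}$ altogether, exhibiting the planar case as strictly easier than the symmetric one; yours makes the corollary a literal instance of Proposition \ref{propositionch} and isolates a reusable fact -- the symmetrization of any planar operad is a retract of its Hadamard product with $\operad{BE}$ -- which is morally the same observation, since only the degree-zero, coassociative part of the $\operad{BE}$-coalgebra structure of $J$ is ever needed. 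Your final upgrade from a coherent path object to a coherent cylinder--path pair (the adjoint cylinder, plus the two-sided comparison using that $Q_0 \to \id$ is an isomorphism) is exactly how the paper's definition of a coherent pair is meant to be applied, so the two proofs end the same way.
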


\begin{proof}
It suffices to apply the same arguments as in the proofs of Lemma \ref{lemmapathalgebra} and of Proposition \ref{propositionch}, knowing that $J$ has the canonical structure of an $\operad{uAs}$-coalgebra (where $\operad{uAs}$ stands for the planar operad that encodes unital associative algebras) and that $\operad{uAs}$ is the unit of the levelwise tensor product of planar operads.
\end{proof}

\subsection{Another left square structure on algebras}

Now, let us suppose that $\mathcal A = \mathbb K$ is a field and that $\operad O= \operad{As}$ is the associative planar operad (actually, what we say works for any planar operad). Using the coassociative coalgebra structure on $J$, the model category of $\operad O$-algebras in chain complexes has a coherent path object given by the convolution algebra
$$
P^{(1)}(A) = [J,A].
$$
Moreover, this path object has an adjoint coherent cylinder $Q_1$.

Koszul duality techniques (see \cite{LodayVallette12}) provides us with an adjunction $B^\vee \dashv B$ relating associative algebras to locally conilpotent coassociative coalgebras
$$
\begin{tikzcd}
\text{Conilpotent-coalgebras}
\ar[rr, shift left, "B^\vee"]
&&
\catofalg{As}
\ar[ll, shift left, "B"]
\end{tikzcd}
$$

\begin{proposition}\cite{LefevreHasegawa03}
There exists a model structure on conilpotent coalgebras transferred from that on associative algebras that is whose
\begin{itemize}
    \itemt cofibrations are maps $f$ so that $B^\vee(f)$ is a cofibration;
    \itemt weak equivalences are maps $f$ so that $B^\vee(f)$ is a quasi-isomorphism.
\end{itemize}
Then, the adjunction $B^\vee \dashv B$ is a Quillen equivalence.
\end{proposition}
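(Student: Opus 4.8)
The plan is to treat the model structure on conilpotent coalgebras as left-induced along the left adjoint $B^\vee$ (the cobar construction), and to deduce the Quillen equivalence from the two fundamental acyclicity statements of Koszul duality over a field, namely: (i) for every associative algebra $A$, the counit $B^\vee B A \to A$ is a quasi-isomorphism; and (ii) for every conilpotent coalgebra $C$, the unit $C \to B B^\vee C$ is a weak equivalence, that is, $B^\vee$ applied to it is a quasi-isomorphism. Both are classical consequences of the bar--cobar resolution (see \cite{LodayVallette12}), and I would take them as the essential input.

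I would first dispatch the formal axioms. Two-out-of-three for weak equivalences, and closure of weak equivalences and cofibrations under retracts, are immediate, since these classes are created by $B^\vee$ from classes enjoying the same properties in $\catofalg{As}$; fibrations are retract-closed because they are defined by a right lifting property. Working over a field, I would then identify the cofibrations concretely: since $B^\vee C$ is free as a graded algebra on the desuspension $s^{-1}\overline C$ of the coaugmentation coideal, the map $B^\vee f$ is injective if and only if $f$ is, and a map of quasi-free algebras induced by a monomorphism of generators is a relative cell extension, hence a cofibration; conversely cofibrations of algebras are injective. Thus the cofibrations of coalgebras are exactly the monomorphisms, a concrete description I would use to verify the remaining axioms.

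The \emph{main obstacle} is factorization, and with it the characterisation of fibrations by the right lifting property against \emph{acyclic} cofibrations. Here I would argue by a filtration argument: every conilpotent coalgebra is the union of the finite stages of its coradical filtration, which permits building the interpolating object by a transfinite induction, at each stage attaching cofree coalgebra cells to correct the homology, with the acyclicity of the inserted maps controlled precisely by statements (i)--(ii). This yields the two functorial factorisations (a monomorphism followed by an acyclic fibration, and an acyclic monomorphism followed by a fibration), and the Koszul-duality input is exactly what makes the attached cells weak equivalences; this is the technically delicate step. Equivalently, one may invoke a general existence theorem for left-induced model structures on the locally presentable category of conilpotent coalgebras, whose acyclicity hypothesis is again supplied by statement (ii).

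Finally I would prove the Quillen equivalence. By construction $B^\vee$ preserves cofibrations and all weak equivalences, hence is left Quillen, and it evidently reflects weak equivalences. All algebras are fibrant, so for a cofibrant coalgebra $C$ the derived unit is the unit $C \to B B^\vee C$, which is a weak equivalence by (ii). Concretely, for a cofibrant $C$ and any algebra $A$, a map $\phi : B^\vee C \to A$ corresponds under the adjunction to $B\phi \circ \eta_C : C \to B A$; applying $B^\vee$, the factor $B^\vee \eta_C$ is a quasi-isomorphism by (ii), while naturality of the counit together with (i), applied at $\epsilon_{B^\vee C}$ and $\epsilon_A$, shows that $\phi$ is a quasi-isomorphism if and only if $B^\vee B\phi$ is. Hence $\phi$ is a weak equivalence precisely when its adjunct is, which is the criterion for $B^\vee \dashv B$ to be a Quillen equivalence.
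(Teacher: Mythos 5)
The paper offers no proof of this proposition at all: it is imported verbatim from Lefèvre-Hasegawa \cite{LefevreHasegawa03}, so your sketch can only be compared with that reference, whose overall architecture you do follow (classes left-induced along $B^\vee$, cofibrations identified with monomorphisms, the bar--cobar quasi-isomorphism statements (i)--(ii) as the key input, and the standard adjunct criterion for a Quillen equivalence). Your last step is indeed correct and complete once the model structure is granted: the adjunct of $\phi : B^\vee C \to A$ is $B\phi \circ \eta_C$; statement (ii) handles the unit factor --- and note that (ii) is not an independent input, since the triangle identity $\epsilon_{B^\vee C} \circ B^\vee(\eta_C) = \ii_{B^\vee C}$ together with (i) gives it by 2-out-of-3 --- while naturality of $\epsilon$ and (i) handle the other factor.

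The genuine gap sits exactly where you flag ``the technically delicate step'', and it is not a detail: it is the whole content of the cited theorem. First, ``attaching cofree coalgebra cells to correct the homology'' is not an argument, and it is not the shape of the actual proof, which in \cite{LefevreHasegawa03} rests on a substantial theory of admissible filtrations and filtered quasi-isomorphisms of conilpotent coalgebras in order to produce the two factorisations. Second, the proposed shortcut misstates the hypothesis of the general left-induction theorems: their acyclicity condition is that \emph{every} map of conilpotent coalgebras with the right lifting property against all cofibrations (= monomorphisms) is a weak equivalence; this concerns arbitrary such maps, not units of the adjunction, so it is not ``supplied by statement (ii)'' and needs its own nontrivial proof. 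Third, your identification of the cofibrations has a hole: a map of quasi-free dg algebras induced by a monomorphism of generators is \emph{not} in general a cofibration (quasi-free does not imply cofibrant); one needs the differential to be compatible with an exhaustive filtration of the generators, and it is precisely the conilpotency of $C$ --- the differential of $B^\vee C$ lowers the coradical filtration --- that makes $B^\vee C$ and the relevant relative maps cellular. These three points are the mathematics hiding behind the paper's citation, and as written your proposal does not supply them.
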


\begin{lemma}
The category of conilpotent coalgebras admits a coherent cylinder. Then, the adjunction $B^\vee \dashv B$ is a squared Quillen equivalence.
\end{lemma}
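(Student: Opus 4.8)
The plan is to transport the coherent cylinder already available on associative algebras across the bar--cobar adjunction $B^\vee\dashv B$, arranging that the cobar functor $B^\vee$ becomes a strict left square functor; the homotopical content then descends for free, because by \cite{LefevreHasegawa03} the functor $B^\vee$ creates both cofibrations and weak equivalences and, being a left adjoint, preserves all colimits.

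First I would make the associative cylinder explicit. By Lemma \ref{lemmapathalgebra} together with the coherent cylinder--path pair on associative algebras, the algebras carry a two-sided coherent square structure $(Q^{\mathrm{alg}},P^{\mathrm{alg}})$ with $P^{\mathrm{alg},(n)}=[J^{\otimes n},-]$; adjoining against the free algebra functor gives $Q^{\mathrm{alg}}_n(T(V))\cong T(J^{\otimes n}\otimes V)$ on a (quasi-)free algebra $T(V)$. Since $B^\vee(C)$ is quasi-free on $s^{-1}\overline{C}$, the algebra $Q^{\mathrm{alg}}_n(B^\vee C)$ is quasi-free on $J^{\otimes n}\otimes s^{-1}\overline{C}$, and a direct inspection of its twisted differential---whose quadratic part combines the comultiplication of $J$ with the reduced coproduct of $C$---shows that it is again the cobar construction $B^\vee(Q^{\mathrm{coalg}}_n C)$ of a coalgebra $Q^{\mathrm{coalg}}_n(C)$ with reduced part $J^{\otimes n}\otimes\overline{C}$. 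Crucially this coalgebra is conilpotent: local nilpotence of its reduced coproduct is governed by that of $\overline{C}$, the $J^{\otimes n}$-direction contributing only a bounded factor. Letting $\square[n]$ vary, these assemble into a natural cylinder object, hence a left square structure $Q^{\mathrm{coalg}}$ on conilpotent coalgebras, together with natural isomorphisms $B^\vee Q^{\mathrm{coalg}}_n\xrightarrow{\;\sim\;}Q^{\mathrm{alg}}_n B^\vee$ compatible with the structure maps $\alpha$ and $\beta$ (note $Q^{\mathrm{coalg}}_0=\id$).

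Next I would descend homotopical-ness and conclude. As $B^\vee$ is a left adjoint it preserves the colimits defining $Q^{\mathrm{coalg}}_A$, so $B^\vee Q^{\mathrm{coalg}}_A\cong Q^{\mathrm{alg}}_A B^\vee$ for every finite cubical set $A$; applying $B^\vee$ to the pushout-product map of a cofibration $f\colon X\to Y$ against $\partial\square[n]\to\square[n]$ turns it into the corresponding map for $Q^{\mathrm{alg}}$ applied to $B^\vee f$. Since a coalgebra map is a cofibration (resp. a weak equivalence) exactly when its image under $B^\vee$ is one, and $Q^{\mathrm{alg}}$ is homotopical, the coalgebraic pushout-product maps are (acyclic) cofibrations; together with the $\sqcap^{i,\epsilon}[n]$ variant this verifies the characterisation of homotopical square structures. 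The remaining conditions are immediate, since $Q^{\mathrm{coalg}}_0=\id$ and every conilpotent coalgebra is cofibrant in the Lefèvre--Hasegawa structure. Thus $Q^{\mathrm{coalg}}$ is a coherent cylinder, the strict left square functor structure on $B^\vee$ makes $B^\vee\dashv B$ a squared adjunction by \cite{Kelly74}, and since $B^\vee$ is left Quillen and a Quillen equivalence by \cite{LefevreHasegawa03}, this is a squared Quillen equivalence.

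The main obstacle is the construction of the second paragraph: verifying that $Q^{\mathrm{alg}}_n(B^\vee C)$ really lies in the essential image of $B^\vee$ on conilpotent coalgebras---that is, that the coalgebra structure read off from the twisted differential is coassociative and conilpotent (the latter controlled by $\overline{C}$), that it is functorial in $C$, and that the comparison maps $B^\vee Q^{\mathrm{coalg}}_n\cong Q^{\mathrm{alg}}_n B^\vee$ are genuinely compatible with $\alpha$ and $\beta$, so that $B^\vee$ is strictly square. Once this bookkeeping is in place, every homotopical statement descends formally along the cofibration- and weak-equivalence-creating functor $B^\vee$.
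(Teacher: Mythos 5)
Your proposal is correct and takes essentially the same route as the paper: produce a cylinder on conilpotent coalgebras that $B^\vee$ intertwines strictly with the coherent cylinder $Q_1$ on algebras, descend coherence through the transferred Lef\`evre--Hasegawa model structure (in which $B^\vee$ creates cofibrations and weak equivalences and, as a left adjoint, preserves the relevant colimits), and conclude the squared Quillen equivalence from the adjunction-plus-left-square-functor characterisation. The only difference is in how the cylinder is exhibited: the paper defines $Q_1^{cog}(C)=J\otimes C$ directly, using that conilpotent coalgebras are tensored over counital coassociative coalgebras (so that coassociativity, conilpotence and functoriality---your self-declared ``main obstacle''---come for free, the reduced part of this tensoring being exactly your $J\otimes\overline{C}$), whereas you reverse-engineer the same coalgebra from the twisted differential of $Q_n^{\mathrm{alg}}B^\vee C$, which is why your version carries an extra verification burden that the paper's formulation avoids.
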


\begin{proof}
The category of conilpotent coalgebras is tensored over coassociative coalgebras. Then, using the structure of a coalgebra on $J$, we get a cylinder defined by
$$
Q_1^{cog} (V) = J \otimes V 
$$
Then, one can notice that $B^\vee$ commutes with the cylinders, in the sense that we have a canonical isomorphism
$$
Q_1 B^\vee = B^\vee Q_1^{cog} .
$$
Since the natural cylinder $Q_1$ of associative algebras is coherent and since the model structure on conilpotent coalgebras is transferred through $B^\vee$, then $Q_1^{cog}$ is coherent. It is clear then that the adjunction $B^\vee \dashv B$ is a squared Quillen equivalence.
\end{proof}

\begin{definition}
Let
$(C_n)_{n \in \square}$ be the left square structure on $\catofalg{As}$ transferred from $Q_n^{cog}$ through the adjunction $B^\vee \dashv B$, that is
$$
C_n (A) =Q_n B^\vee B(A) = B^\vee Q_n^{cog} B(A) =  B^\vee (J^{\otimes n} \otimes B(A));
$$
for any associative algebra in chain complexes $A$, and any $n \geq 0$.
\end{definition}

\begin{proposition}
The left square structure $(C_n)_{n \in \square}$ on associative algebras in chain complexes is homotopical.
\end{proposition}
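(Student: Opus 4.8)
The plan is to deduce everything from the already-established homotopicality of the coherent cylinder $Q^{cog}$ on conilpotent coalgebras, by transporting the three defining conditions across the adjunction $B^\vee \dashv B$. Writing $\categ M = \catofalg{As}$ and letting $\categ D$ denote the category of conilpotent coalgebras, the first and central step is to record the natural identification of cubical mapping objects
$$
\categ M^C(X,Y)_n = \hom_{\categ M}(C_n X, Y) = \hom_{\categ M}(B^\vee Q^{cog}_n B X, Y) \cong \hom_{\categ D}(Q^{cog}_n BX, BY) = \categ D^{Q^{cog}}(BX,BY)_n,
$$
where the middle isomorphism is the adjunction $B^\vee \dashv B$. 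Thus $\categ M^C(X,Y) \cong \categ D^{Q^{cog}}(BX,BY)$ as cubical sets, naturally in $X$ and $Y$, so that condition (1) for $C$ becomes a statement about $\categ D^{Q^{cog}}$ applied to $BX, BX', BY, BY'$.

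Next I would assemble the relevant properties of the bar functor $B$. Since $B$ is the right adjoint of the Quillen equivalence $B^\vee \dashv B$, it sends fibrations and acyclic fibrations of algebras to fibrations and acyclic fibrations of coalgebras. Because $Q^{cog}_0 = \id$, one has $C_0 = B^\vee B$, and the map $C_0(X) = B^\vee B X \to X$ is the classical bar--cobar resolution, a quasi-isomorphism for every $X$; this is exactly condition (2), and applying two-out-of-three to the naturality square of this counit shows that $B$ preserves all weak equivalences. The hard part will be to show that $B$ also sends cofibrations to cofibrations: I would argue that the cofibrations of $\categ D$ are precisely the monomorphisms, and that over a field $B$ carries an injection $f$ to the injection $T^c(sf)$ on underlying cofree coalgebras; since cofibrations of associative algebras are degreewise injective, this yields that $B$ preserves cofibrations. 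As a by-product every coalgebra is cofibrant, so $B(\emptyset)$ is cofibrant and, $B^\vee$ being left Quillen, $C_0(\emptyset) = B^\vee B(\emptyset)$ is cofibrant, which is condition (3).

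With these preparations condition (1) is immediate. For a cofibration $f : X \to X'$ and a fibration $g : Y \to Y'$ in $\categ M$, the map
$$
\categ M^C(X',Y) \to \categ M^C(X,Y) \times_{\categ M^C(X,Y')} \categ M^C(X',Y')
$$
is identified, through the bridge above, with the analogous comparison map for $Q^{cog}$ attached to $Bf : BX \to BX'$ and $Bg : BY \to BY'$. Here $Bf$ is a cofibration and $Bg$ a fibration of coalgebras, so the homotopicality of $Q^{cog}$ makes this map a fibration of cubical sets; if $f$ is acyclic then $Bf$ is a weak equivalence and hence an acyclic cofibration, while if $g$ is acyclic then $Bg$ is an acyclic fibration, and in either case the same homotopicality makes the map acyclic. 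I expect the identification of the cofibrations of $\categ D$ with the monomorphisms, and hence their preservation by $B$, to be the only genuinely nontrivial ingredient; everything else is formal transport across the adjunction.
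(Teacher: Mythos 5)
Your proposal is correct and takes essentially the same route as the paper: both arguments transport the problem across the adjunction $B^\vee \dashv B$ to the coherent cylinder on conilpotent coalgebras, relying on the facts that $B$ and $B^\vee$ preserve cofibrations and weak equivalences and that the counit $B^\vee B(A) \to A$ is a weak equivalence. The paper's proof merely cites these facts in one line, while you fill in the details it leaves implicit (the adjunction identification of the mapping cubes, the 2-out-of-3 argument showing $B$ preserves weak equivalences, and the Lef\`evre-Hasegawa characterisation of coalgebra cofibrations as monomorphisms, which is indeed the nontrivial input behind ``$B$ preserves cofibrations'').
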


\begin{proof}
This is a direct consequence of the fact that both $B$ and $B^\vee$ preserves cofibrations and weak equivalences and the fact that the map $B^\vee B (A) \to A$ is a weak equivalence for any algebra $A$.
\end{proof}

This left square structure $(C_n)_n$ on $\catofalg{As}$ has two interesting (and related to each other) properties. On the one hand, for any algebra $A$, $C_0(A)$ is a cofibrant replacement of $A$. On the other hand, for any two weak equivalences $A' \to A$ and $B \to B'$, the map
$$
\catofalg{As}^C(A, B) \to \catofalg{As}^C(A', B')
$$
is an equivalence of fibrant cubical sets.


\section{Localisation of cubical model categories}

In this section, we show that, given a model category $\categ M$ equipped with an homotopical left square structure $Q$, the restriction to fibrant-cofibrant objects of $\categ M^Q$ is a localisation of $\categ M$ with respect to weak equivalences.

We first recall results of Dwyer-Kan localisation and then prove the result.

\subsection{What is localisation}

\begin{definition}
Let $\categ C$ be a quasi-category and $W$ be a subset of maps. The localisation of $\categ C$ with respect to $W$ is a quasi-category $\mathcal{L}_W \categ C$ equipped with an $\infty$-functor $\pi : \categ C \to \mathcal L_W \categ C$
so that
\begin{enumerate}
    \item $\pi$ sends morphisms in $W$ to equivalences;
    \item for any quasi-category $\categ D$, the functor
    $$
    \Funinfty{\mathcal{L}_W \categ C}{\categ D} \to \Funinfty{\categ C}{\categ D} \times_{\Fun{W}{\categ D}}
    \Fun{W}{\mathrm{Core}(\categ D)}
    $$
    is an equivalence of quasi-categories (where $\mathrm{Core}(\categ D)$ is the largest Kan complex contained in $\categ D$).
\end{enumerate}
One can equivalently define $\mathcal{L}_W \categ C$ using the fact that we have two homotopy pushouts in the $\infty$-category of $\infty$-categories
$$
\begin{tikzcd}
\coprod_{w \in W}N({[1]})
\ar[r] \ar[d]
& \categ C
\ar[d]
\\
\coprod_{w \in W}N(\mathrm{Iso}) \ar[r]
& \mathcal{L}_W \categ C
\end{tikzcd}
\quad
\begin{tikzcd}
\Tilde{W}
\ar[r] \ar[d]
& \categ C
\ar[d]
\\
\mathrm{Ex}^\infty \Tilde{W} \ar[r]
& \mathcal{L}_W \categ C
\end{tikzcd}
$$
where $\Tilde{W}$ denotes the smallest quasi-category of $\categ C$ that contains all the maps of $W$.
\end{definition}

\begin{remark}
Let us consider two $\infty$-functor $\pi_1 : \categ C \to \mathcal L^{(1)}_W\categ C$ and $\pi_2 : \categ C \to \mathcal L^{(2)}_W\categ C$ that satisfy condition (1) of the above definition and condition (2) in the case where $\categ D$ is the $\infty$-category $\categ S$ of $\infty$-groupoids. Then, these morphisms are canonically equivalent in the category
$$
\mathrm{Ho}(\text{Quasi-categories})_{\categ C /} .
$$
Indeed, Yoneda's lemma implies that the opposite of $\mathcal L^{(1)}_W\categ C$ and of $\mathcal L^{(2)}_W\categ C$ identify with the same sub-$\infty$-category of $\Funinfty{\categ C}{\categ S}$. Then, necessarily, $\pi_1$ and $\pi_2$ are equivalent to $\pi$. This argument \`a la Yoneda may be used to describe the mapping spaces of $\mathcal L_W\categ C$; see for instance \cite{Nuiten16}.
\end{remark}

\begin{remark}
Actually, Barwick and Kan have shown that any $\infty$-category is equivalent to the localisation of a category; see \cite{BARWICK201242}.
\end{remark}

\subsection{Localisation of simplicial and cubical categories}

\begin{definition}
Let $\categ C$ be a simplicial category (resp. a cubical category) and let $W$ be a set of morphisms. A simplicial (resp. cubical) functor $\categ C \to \categ D$ is a localisation with respect $W$ if the morphism of quasi-categories
$$
\mathcal N (\categ C) \to \mathcal N (\categ D)
$$
is a localisation with respect to $W$.
\end{definition}

\begin{lemma}\label{lemmalift}
Let us consider a fibrant simplicial category $\categ D$ and a morphism $f \in \categ D$. Then, $f$ is an equivalence in $\categ D$ if and only if the following square diagram has a lifting
$$
\begin{tikzcd}
{[1]} 
\ar[r, "f"] \ar[d]
& \categ D
\ar[d]
\\
\mathfrak C N (\mathrm{Iso})
\ar[r]
& \ast.
\end{tikzcd}
$$
\end{lemma}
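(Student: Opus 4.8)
The plan is to transpose the lifting problem across the adjunction $\mathfrak C \dashv N$ and then recognise it as the classical criterion that detects equivalences in a quasi-category. First I would record two identifications. Since the Boardman--Vogt resolution of the linear order $[1]$ has trivial mapping spaces, one has $[1] = \mathfrak C(\Delta[1])$ as simplicial categories; and the left-hand vertical map of the square is $\mathfrak C(j)$, where $j : \Delta[1] \hookrightarrow N(\mathrm{Iso})$ is the edge selecting the non-identity isomorphism $0 \to 1$. As the right-hand vertical map targets the terminal simplicial category $\ast$, it imposes no constraint, so a lift in the square is precisely a simplicial functor $\mathfrak C N(\mathrm{Iso}) \to \categ D$ whose restriction along $\mathfrak C(j)$ is $f$.

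Next, because $\categ D$ is fibrant (locally Kan), its homotopy coherent nerve $N(\categ D)$ is a quasi-category. Applying $\mathfrak C \dashv N$, a simplicial functor $\mathfrak C N(\mathrm{Iso}) \to \categ D$ corresponds bijectively and naturally to a map of simplicial sets $N(\mathrm{Iso}) \to N(\categ D)$, and by naturality the compatibility with $f$ becomes the requirement that this map restrict along $j$ to the edge $f : \Delta[1] \to N(\categ D)$ classifying $f$. Thus the original square admits a lift if and only if there is a dashed filler in
$$
\begin{tikzcd}
\Delta[1] \ar[r, "f"] \ar[d, "j"'] & N(\categ D) \\
N(\mathrm{Iso}) \ar[ru, dashed]
\end{tikzcd}
$$
that is, if and only if the edge $f$ of the quasi-category $N(\categ D)$ extends along $j$ to a map out of $N(\mathrm{Iso})$.

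I would then invoke the standard characterisation of equivalences in a quasi-category (Joyal, and Lurie; see \cite{Joyal00}\cite{Lurie17}): an edge of a quasi-category is an equivalence exactly when the map $\Delta[1] \to X$ it classifies factors through $j : \Delta[1] \hookrightarrow N(\mathrm{Iso})$. The reverse implication of that criterion is elementary here: $N(\mathrm{Iso})$ is a Kan complex, being the nerve of a groupoid, so every one of its edges is an equivalence; since any map of quasi-categories preserves equivalences, any extension of $f$ over $N(\mathrm{Iso})$ forces $f$ to be an equivalence in $N(\categ D)$. To conclude, I would pass between ``$f$ is an equivalence in $\categ D$'' and ``$f$ is an equivalence in $N(\categ D)$'' using that the homotopy category $h(\categ D)$, obtained by taking $\pi_0$ of the mapping spaces, coincides with the homotopy category of the quasi-category $N(\categ D)$, and that in both settings equivalence of a morphism means being sent to an isomorphism in this common homotopy category.

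The genuinely substantive input is the quasi-category characterisation of equivalences, which I would cite rather than reprove. The remaining points are bookkeeping: verifying the identifications $[1] = \mathfrak C(\Delta[1])$ and that the vertical map equals $\mathfrak C(j)$, checking that the adjunction transposition matches the two triangles on the nose, and confirming the compatibility $h(\categ D) \cong h(N(\categ D))$. I expect the main obstacle to be purely this matching of notions—ensuring that the meaning of ``equivalence of a morphism in a simplicial category'' used elsewhere in the text is exactly invertibility in $h(N(\categ D))$—rather than any nontrivial homotopical computation.
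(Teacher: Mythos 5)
Your proof is correct, but it takes a genuinely different route from the paper. The paper disposes of this lemma in one line, citing Bergner's Lemma 2.4, which is a statement proved entirely within the world of simplicial categories: in a fibrant simplicial category, homotopy equivalences are exactly the morphisms that extend along the inclusion of $[1]$ into a cofibrant resolution of $\mathrm{Iso}$, and $\mathfrak C N(\mathrm{Iso})$ is such a resolution. You instead transpose the lifting problem across the adjunction $\mathfrak C \dashv N$ (using $[1] = \mathfrak C(\Delta[1])$ and that the left vertical map is $\mathfrak C(j)$ for $j : \Delta[1] \hookrightarrow N(\mathrm{Iso})$), observe that $N(\categ D)$ is a quasi-category because $\categ D$ is locally Kan, and then invoke Joyal's characterisation of equivalences in a quasi-category as the edges extending over $N(\mathrm{Iso})$, together with the identification $h(\categ D) \cong h(N(\categ D))$ for fibrant $\categ D$. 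Each step you list is valid, including the easy direction via the fact that $N(\mathrm{Iso})$ is a Kan complex whose edges are all equivalences. What the paper's route buys is brevity and staying inside simplicial categories, at the cost of deferring the actual content to Bergner; what your route buys is that all the ingredients (the Bergner and Joyal model structures, the Quillen equivalence $\mathfrak C \dashv N$) are already recalled in the paper, so the argument is essentially self-contained modulo Joyal's theorem, and it makes transparent that the lemma is the simplicial-category shadow of the quasi-categorical equivalence criterion. The one point worth making explicit if you wrote this up is the compatibility of the two notions of ``equivalence'': the counit $\mathfrak C N(\categ D) \to \categ D$ is a Dwyer--Kan equivalence for fibrant $\categ D$, which is what identifies invertibility in $h(N(\categ D))$ with invertibility in $\pi_0$ of the mapping spaces of $\categ D$.
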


\begin{proof}
This is a straightforward consequence of \cite[Lemma 2.4]{Bergner07}.
\end{proof}

\begin{definition}
Let $\categ C$ be a simplicial category and let $W$ be a set of morphisms. Then, we denote $\mathbb L_W \categ C$ the following pushout in the category of simplicial categories
$$
\begin{tikzcd}
\coprod_{f \in W} {[1]} 
\ar[r] \ar[d]
& \categ C
\ar[d]
\\
\coprod_{f \in W} \mathfrak C N (\mathrm{Iso})
\ar[r]
& \mathbb L_W \categ C .
\end{tikzcd}
$$
Since the Bergner model structure is left proper, this is an homotopy pushout.
\end{definition}

\begin{proposition}
The map $\pi : \categ C \to \mathbb L_W \categ C$ is a localisation
\end{proposition}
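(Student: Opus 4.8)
The plan is to show, using the pushout description of localisation recalled above, that the derived simplicial nerve $\mathcal N$ carries the defining pushout of $\mathbb L_W \categ C$ to the homotopy pushout that characterises $\mathcal L_W \mathcal N(\categ C)$. By definition, $\pi$ is a localisation with respect to $W$ exactly when the induced map of quasi-categories $\mathcal N(\categ C) \to \mathcal N(\mathbb L_W \categ C)$ is a localisation, so it suffices to produce an equivalence $\mathcal N(\mathbb L_W \categ C) \simeq \mathcal L_W \mathcal N(\categ C)$ under $\mathcal N(\categ C)$.

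First I would observe that the square defining $\mathbb L_W \categ C$ is a homotopy pushout in the Bergner model structure. Indeed, since $\mathfrak C \Delta[1] = [1]$, the left vertical map is $\mathfrak C$ applied to the coproduct of the inclusions $\Delta[1] = N([1]) \hookrightarrow N(\mathrm{Iso})$; these are monomorphisms, hence cofibrations for the Joyal model structure, and as $\mathfrak C$ is left Quillen the left vertical map is a cofibration. The Bergner model structure being left proper, the strict pushout therefore computes the homotopy pushout, as already asserted in the construction of $\mathbb L_W \categ C$.

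Next, since $\mathfrak C \dashv N$ is a Quillen equivalence, the derived functors $\mathbb L \mathfrak C$ and $\mathbb R N \simeq \mathcal N$ are mutually inverse equivalences between the $\infty$-categories presented by the two model structures; in particular $\mathcal N$ preserves homotopy pushouts and coproducts. Applying $\mathcal N$ to the homotopy pushout square and identifying the terms, one gets $\mathcal N\bigl(\coprod_{f \in W}[1]\bigr) \simeq \coprod_{w \in W} N([1])$, while $\mathfrak C N(\mathrm{Iso}) \simeq \mathbb L \mathfrak C(N(\mathrm{Iso}))$, every simplicial set being Joyal-cofibrant, so the unit of the equivalence yields $\mathcal N\bigl(\coprod_{f \in W}\mathfrak C N(\mathrm{Iso})\bigr) \simeq \coprod_{w \in W} N(\mathrm{Iso})$. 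The top map is identified with the map determined by the family $W$, and the left map with the coproduct of the canonical morphisms $N([1]) \to N(\mathrm{Iso})$.

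I therefore obtain a homotopy pushout of quasi-categories
$$
\begin{tikzcd}
\coprod_{w \in W} N([1])
\ar[r] \ar[d]
& \mathcal N(\categ C)
\ar[d]
\\
\coprod_{w \in W} N(\mathrm{Iso})
\ar[r]
& \mathcal N(\mathbb L_W \categ C)
\end{tikzcd}
$$
which is precisely the first homotopy pushout appearing in the definition of $\mathcal L_W \mathcal N(\categ C)$. The universal property of homotopy pushouts then gives the desired equivalence $\mathcal N(\mathbb L_W \categ C) \simeq \mathcal L_W \mathcal N(\categ C)$ under $\mathcal N(\categ C)$, so $\pi$ is a localisation with respect to $W$. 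The hard part will be the last identification of the transported square with the localisation square: one must verify that the derived unit $N(\mathrm{Iso}) \xrightarrow{\sim} \mathcal N \mathfrak C N(\mathrm{Iso})$ is compatible with the structural maps and with the identification $\mathfrak C \Delta[1] = [1]$, so that the comparison is genuinely an equivalence under $\mathcal N(\categ C)$.
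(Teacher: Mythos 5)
Your proof is correct and follows essentially the same route as the paper, whose own argument is just the terse observation that $\mathfrak C \dashv N$ is a Quillen equivalence for the Joyal model structure and that the pushout defining $\mathbb L_W \categ C$ is a homotopy pushout; you have simply expanded these two facts into the explicit transport of the homotopy pushout square through the derived nerve and its identification with the defining square of $\mathcal L_W \mathcal N(\categ C)$.
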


\begin{proof}
This is a consequence of the facts that the adjunction $\mathfrak C \dashv N$ is a Quillen equivalence (when simplicial sets are endowed with the Joyal model structure) and that the puhsout defining $\mathbb L_W \categ C$ is an  homotopy pushout.
\end{proof}

\begin{definition}
Let $\categ C$ be a category and let $W$ be a subcategory. The hammock localisation of $\categ C$ with respect to $W$ defined in \cite{DwyerKan80b} is denoted $\hammock \categ C$.
\end{definition}

\begin{theorem}\cite{DwyerKan80b}\cite{Hinich16}
Given a category $\categ C$ and a subcategory $W$, the simplicial functor
$$
\categ C \to \hammock \categ C
$$
is a localisation with respect to $W$.
\end{theorem}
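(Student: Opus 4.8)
The plan is to leverage the preceding proposition, which already identifies $\categ C \to \mathbb L_W \categ C$ as a localisation, and to compare the hammock construction to this model. Concretely, it suffices to produce a Bergner equivalence $\mathbb L_W \categ C \to \hammock \categ C$ compatible with the two functors out of $\categ C$; applying $\mathcal N$ then identifies $\mathcal N(\categ C) \to \mathcal N(\hammock \categ C)$ with the already-established localisation $\mathcal N(\categ C) \to \mathcal N(\mathbb L_W \categ C)$.

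First I would invoke the two foundational properties of the hammock construction established in \cite{DwyerKan80b}: the functor $\categ C \to \hammock \categ C$ sends every morphism of $W$ to a homotopy equivalence, and the homotopy category $h(\hammock \categ C)$ is the ordinary localisation $\categ C[W^{-1}]$. Choosing a fibrant replacement $\hammock \categ C \to \categ D$ for the Bergner model structure and applying Lemma \ref{lemmalift}, each $f \in W$ becomes an equivalence in $\categ D$ and hence extends along the inclusion $[1] \to \mathfrak C N(\mathrm{Iso})$ to a map $\mathfrak C N(\mathrm{Iso}) \to \categ D$. Since $\mathbb L_W \categ C$ is built as the homotopy pushout that freely adjoins exactly such extensions, its universal property yields a comparison functor $\Phi : \mathbb L_W \categ C \to \categ D$ under $\categ C$, well defined up to homotopy.

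It remains to show that $\Phi$ is a weak equivalence. On homotopy categories this is immediate: both $h(\mathbb L_W \categ C)$ and $h(\categ D) = h(\hammock \categ C)$ are canonically the localisation $\categ C[W^{-1}]$, and $h(\Phi)$ is the identity on it. The main obstacle is the comparison of mapping spaces, i.e.\ showing that $\Phi$ induces weak equivalences $\mathbb L_W \categ C(X,Y) \to \categ D(X,Y)$. Here one must identify the hammock mapping complexes with the derived mapping spaces of the localisation; this is precisely the homotopy-theoretic heart of the Dwyer--Kan calculus, and its reformulation in the $\infty$-categorical framework used here --- that the simplicial (hammock) localisation presents the $\infty$-categorical localisation --- is supplied by Hinich \cite{Hinich16}. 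Granting this identification, and knowing from the preceding proposition that $\mathbb L_W \categ C$ computes the same derived mapping spaces, $\Phi$ is a levelwise weak equivalence, hence a Bergner equivalence; since $\hammock \categ C \to \categ D$ is one as well, the functor $\categ C \to \hammock \categ C$ is identified up to equivalence with the localisation $\categ C \to \mathbb L_W \categ C$, which completes the argument.
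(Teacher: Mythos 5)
Your argument has a genuine circularity at its key step. The substantive parts you supply --- the extension of each $f \in W$ along $[1] \to \mathfrak C N(\mathrm{Iso})$ via Lemma \ref{lemmalift}, the induced comparison $\Phi : \mathbb L_W \categ C \to \categ D$ under $\categ C$, and the identification of homotopy categories --- establish only that a comparison functor exists and that it induces an isomorphism on $h(-)$. The whole difficulty is the mapping-space comparison, and there you write that the needed identification ``is supplied by Hinich'': but ``the hammock localisation presents the $\infty$-categorical localisation'' is precisely the statement of the theorem, so you are invoking the theorem to prove the theorem. The two properties you genuinely import from \cite{DwyerKan80b} (morphisms of $W$ become homotopy equivalences, and $h(\hammock \categ C) \cong \categ C[W^{-1}]$) are insufficient by themselves: the ordinary localisation $\categ C[W^{-1}]$, viewed as a discrete simplicial category, also enjoys both properties, yet it is in general not the localisation, since its mapping spaces have no higher homotopy. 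So without an independent computation of the hammock mapping complexes, $\Phi$ cannot be shown to be a Bergner equivalence, and your proof does not close.

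Note also that your logical order is the reverse of the paper's. The paper proves the theorem first, by a different route: the hammock localisation is naturally equivalent, by Dwyer--Kan, to the standard localisation --- the levelwise ordinary localisation of a free simplicial resolution of $\categ C$ --- and the latter is easily seen to be a model for the localisation. Only afterwards, in the subsequent corollary, does the paper use the theorem to produce exactly your comparison functor $\mathbb L_W \categ C \to \mathrm{Ex}^\infty(\hammock \categ C)$ and conclude that it is an equivalence, by comparing two functors already known to be localisations. A correct repair of your approach would have to replace the appeal to \cite{Hinich16} by such an independent identification of the hammock mapping spaces, for instance via the hammock-versus-standard-localisation comparison that the paper's sketch relies on.
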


\begin{proof}[Idea of the proof]
The hammock localisation is equivalent to another construction called the standard localisation which can easily be shown to be a model for the localisation.
\end{proof}

\begin{corollary}
The functor $\categ C \to L^H_W \categ C \to \mathrm{Ex}^\infty(L^H_W \categ C)$ factorises as
$$
\categ C \to \mathbb L_W \categ C \to
\mathrm{Ex}^\infty(L^H_W \categ C)
$$
where the second map is an equivalence of simplicial categories that is bijective on objects.
\end{corollary}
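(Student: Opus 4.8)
The plan is to produce the comparison functor $g : \mathbb{L}_W \categ C \to \mathrm{Ex}^\infty(L^H_W \categ C)$ directly from the pushout defining $\mathbb{L}_W \categ C$, and then to verify that it is a Dwyer--Kan equivalence which is the identity on objects. The starting point is that both functors out of $\categ C$ are localisations with respect to $W$: the functor $\categ C \to \mathbb{L}_W \categ C$ by the preceding proposition, and the composite $\categ C \to L^H_W \categ C \to \mathrm{Ex}^\infty(L^H_W \categ C)$ because the first arrow is a localisation by the Dwyer--Kan theorem and the second is a fibrant replacement in the Bergner model structure, hence a weak equivalence of simplicial categories; since $\mathcal{N}$ sends such a weak equivalence to an equivalence of quasi-categories, the composite is again a localisation.

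To construct $g$ I would invoke the universal property of the pushout. The leg $\categ C \to \mathrm{Ex}^\infty(L^H_W \categ C)$ is the composite above, so it remains to supply, for each $f \in W$, a functor $\mathfrak{C} N(\mathrm{Iso}) \to \mathrm{Ex}^\infty(L^H_W \categ C)$ whose restriction along $[1] \to \mathfrak{C} N(\mathrm{Iso})$ is the image of $f$. Because $\categ C \to L^H_W \categ C$ is a localisation, it carries every $f \in W$ to an equivalence, so the image of $f$ is an equivalence in the fibrant simplicial category $\mathrm{Ex}^\infty(L^H_W \categ C)$. Lemma \ref{lemmalift} then provides exactly such an extension as the lift in its defining square, and assembling these lifts over all $f \in W$ yields the functor $g$ commuting with the two maps from $\categ C$.

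It remains to check the two properties of $g$. Bijectivity on objects is immediate: the functor $\categ C \to \mathbb{L}_W \categ C$ is the identity on objects, since the pushout attaches copies of $\mathfrak{C} N(\mathrm{Iso})$, which has the same two objects as $[1]$, so no new objects are created; the hammock localisation and $\mathrm{Ex}^\infty$ are likewise bijective on objects, and as $g$ commutes with both legs it must be the identity on objects. For the weak equivalence, I would apply $\mathcal{N}$: both $\mathcal{N}(\categ C) \to \mathcal{N}(\mathbb{L}_W \categ C)$ and $\mathcal{N}(\categ C) \to \mathcal{N}(\mathrm{Ex}^\infty(L^H_W \categ C))$ are localisations of $\mathcal{N}(\categ C)$ at $W$, so by the essential uniqueness of localisations the map $\mathcal{N}(g)$ is an equivalence of quasi-categories. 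Since $\mathcal{N} = N \circ \mathrm{Ex}^\infty$, since $\mathrm{Ex}^\infty$ is a weak equivalence of simplicial categories, and since the right adjoint $N$ of the Quillen equivalence $\mathfrak{C} \dashv N$ reflects weak equivalences between fibrant objects, it follows that $g$ is itself a Dwyer--Kan equivalence.

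The main obstacle is the construction of $g$: one must confirm that the image of each $f \in W$ is genuinely an equivalence in the fibrant model $\mathrm{Ex}^\infty(L^H_W \categ C)$, so that the hypothesis of Lemma \ref{lemmalift} is met, and that the independently chosen lifts for the different $f \in W$ are correctly amalgamated by the pushout universal property into a single functor. Everything else reduces to the formal uniqueness of localisations and the standard properties of the Quillen equivalence $\mathfrak{C} \dashv N$.
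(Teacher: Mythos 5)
Your proposal is correct and follows essentially the same route as the paper: the factorisation is obtained from Lemma \ref{lemmalift} (applied, via the pushout universal property, to the images of the maps of $W$, which are equivalences in the fibrant simplicial category $\mathrm{Ex}^\infty(L^H_W \categ C)$), and the second map is an equivalence because both $\categ C \to \mathbb L_W \categ C$ and the composite are localisations with respect to $W$, so the comparison map must be an equivalence by the uniqueness of localisations. Your write-up simply makes explicit the steps the paper leaves implicit, including the passage through $\mathcal N$ and back via the Quillen equivalence $\mathfrak C \dashv N$.
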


\begin{proof}
The fact that such a factorisation exists follows from Lemma \ref{lemmalift}. The fact that the second map is an equivalence follows from the fact that the composite functor is a localisation of $\categ C$ with respect to $W$.
\end{proof}

\subsection{Localisation of model categories}

In this subsection, we recall some results of Dwyer and Kan about localisation of model categories.

Let $\categ M$ be a model category. We denote $W, U,V$ the wide subcategories of respectively weak equivalences, acyclic cofibration and acyclic fibrations and we denote $\categ M_{c}, \categ M_{f}, \categ M_{cf}$ the full subcategories of respectively cofibrant objects, fibrant objects and fibrant-cofibrant objects.

\begin{theorem}\cite{DwyerKan80c}\cite{Hinich16}
The following functors
$$
\begin{tikzcd}
M_{cf}
\ar[r] \ar[d]
& M_c
\ar[d]
\\
M_f
\ar[r]
& M
\end{tikzcd}
$$
yield equivalences of  $\infty$-categories after localisation with respect to weak equivalences.
\end{theorem}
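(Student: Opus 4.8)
The plan is to prove that each of the four inclusions in the square induces an equivalence of $\infty$-categories after localisation at weak equivalences; since the square commutes, the four results together give the statement. The conceptual tool I would use is the following criterion for relative functors: if $F : (\categ C, W_{\categ C}) \to (\categ D, W_{\categ D})$ is a relative functor admitting a relative functor $G$ in the opposite direction, together with natural transformations between $FG$, $GF$ and the respective identities whose components all lie in the weak equivalences, then $F$ induces an equivalence after localisation. The justification is that the localisation functor carries a natural transformation with weak-equivalence components to a natural isomorphism (using that localisation commutes with products by $[1]$ equipped with trivial weak equivalences), so both composites become naturally isomorphic to the identity. In every case the homotopy inverse $G$ will be a (co)fibrant replacement.

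First I would treat $\iota : \categ M_c \hookrightarrow \categ M$, taking for $G$ a cofibrant replacement $Q$ equipped with a natural weak equivalence $Q \Rightarrow \id_{\categ M}$ whose components are the acyclic fibrations $QX \to X$. The $2$-out-of-$3$ rule shows that $Q$ preserves weak equivalences, and the maps $QX \to X$ furnish natural weak equivalences $\iota Q \Rightarrow \id_{\categ M}$ and $Q\iota \Rightarrow \id_{\categ M_c}$, so the criterion applies. The inclusion $\categ M_f \hookrightarrow \categ M$ I would handle dually, using a fibrant replacement $R$ with natural weak equivalence $\id_{\categ M} \Rightarrow R$ built from the acyclic cofibrations $X \to RX$. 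For $\categ M_{cf} \hookrightarrow \categ M_c$ I would again use fibrant replacement, noting that for cofibrant $X$ the factorisation of $X \to \ast$ yields a fibrant $RX$ which, being reached from $X$ by an acyclic cofibration, is still cofibrant and hence lies in $\categ M_{cf}$; symmetrically, cofibrant replacement supplies the inverse to $\categ M_{cf} \hookrightarrow \categ M_f$, since for fibrant $Y$ the object $QY$ is fibrant as $QY \to Y \to \ast$ is a composite of fibrations.

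The step I expect to be the main obstacle is that the notion of model category adopted here does not require \emph{functorial} factorisations, so the replacements $Q$ and $R$ are not honest functors and the criterion of the first paragraph does not literally apply. This is precisely the difficulty that Dwyer--Kan resolve, and the way I would deal with it is to argue directly on the hammock localisation $\hammock$ rather than on the $1$-categorical localisation: using only the existence of objectwise (co)fibrant replacements and the lifting properties of acyclic (co)fibrations, one shows that every hammock in $\categ M$ is connected, through the $2$-arrows of $\hammock(\categ M)$, to a hammock lying in the relevant subcategory. This gives that each inclusion becomes a weak equivalence of simplicial categories after applying $\hammock(-)$, hence an equivalence of the associated quasi-categories $\mathcal N(-)$; combining the four cases yields the theorem. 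I note that whenever functorial factorisations are available, as in all the examples of the previous section, the clean criterion of the first paragraph applies verbatim and no hammock-level work is needed.
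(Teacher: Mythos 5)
First, a point of comparison: the paper does not actually prove this statement --- it is recalled from \cite{DwyerKan80c} and \cite{Hinich16} without argument --- so your proposal can only be judged on its own terms, not against an internal proof. Your first two paragraphs are correct and standard: the homotopy-inverse criterion for relative functors is valid (a natural transformation with weak-equivalence components between relative functors becomes invertible after localisation, using that localisation commutes with finite products), and your verifications that fibrant replacement of a cofibrant object stays cofibrant and that cofibrant replacement of a fibrant object stays fibrant are exactly right. But, as you yourself observe, this argument needs the replacements $Q$ and $R$ to be honest functors, and the paper's definition of a model category only demands the existence of factorisations, not functorial ones. So in the stated generality everything rests on your third paragraph, and that is where the gap lies.

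The patch you propose --- ``every hammock in $\categ{M}$ is connected, through the $2$-arrows of $\hammock \categ{M}$, to a hammock lying in the relevant subcategory'' --- is only a statement about $\pi_0$ of the mapping spaces (together with essential surjectivity). A Dwyer--Kan equivalence of simplicial categories requires each map $\hammock \categ{M}_{c}(X,Y) \to \hammock \categ{M}(X,Y)$ to be a weak homotopy equivalence, i.e.\ bijective on $\pi_0$ \emph{and} an isomorphism on all higher homotopy groups; surjectivity on components gives neither. Worse, the connectivity claim itself cannot follow from soft arguments, because the conclusion is simply false for general relative categories: take $\categ{C}$ with two objects $a,b$ and two parallel weak equivalences $f,g : a \to b$, and the full subcategory $\categ{C}_0 = \{a\}$; every object of $\categ{C}$ is weakly equivalent to an object of $\categ{C}_0$, yet $\mathcal{L}_W \categ{C} \simeq B\mathbb{Z}$ while $\mathcal{L}_W \categ{C}_0 \simeq \ast$. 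Hence the factorisation and lifting axioms must enter the mapping-space analysis in an essential way. What Dwyer--Kan actually do is develop the homotopy calculus of fractions, reducing the hammock mapping spaces to nerves of categories of short zigzags --- this is precisely Theorem \ref{thmmappingspaceloc}, recalled later in the paper and used in its final arguments --- and that reduction is the substantial content your sketch leaves out. In short: your proof is complete for model categories admitting functorial factorisations, but in the generality in which the theorem is stated, your paragraph three is a citation-shaped placeholder rather than an argument.
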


\begin{definition}
For any two objects $X,Y$ of $\categ M$, let $W^{-1}\categ M(X,Y)$ be the category whose objects are spans in $\categ M$
$$
X \leftarrow Z \to Y
$$
whose left arrow belongs to $W$. The morphisms are just morphisms of spans.
One can define similarly $V^{-1}\categ M(X,Y)$ 
\end{definition}

\begin{theorem}\cite{DwyerKan80b}\cite{DwyerKan80c}\label{thmmappingspaceloc}
Let $X,Y$ be two objects of $\categ M$. If $Y$ is fibrant, then the canonical morphisms of simplicial sets
$$
N(V^{-1}\categ M(X,Y)) \to
N(W^{-1}\categ M (X,Y)) \to\hammock\categ M(X,Y)
$$
are equivalences.
\end{theorem}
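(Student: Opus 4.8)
The plan is to treat the two comparison maps separately, in both cases leaning on the factorization and lifting axioms of the model structure and, crucially, on the hypothesis that $Y$ is fibrant.

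\emph{The first map.} The arrow $N(V^{-1}\categ M(X,Y)) \to N(W^{-1}\categ M(X,Y))$ is the nerve of the inclusion of the full subcategory of spans $X \xleftarrow{w} Z \xrightarrow{f} Y$ whose left leg $w$ is an acyclic fibration into the category of all spans whose left leg is merely a weak equivalence. I would prove it is an equivalence by Quillen's Theorem A, showing that for each span $s = (X \xleftarrow{w} Z \xrightarrow{f} Y)$ in $W^{-1}\categ M(X,Y)$ the comma category $s \downarrow V^{-1}\categ M(X,Y)$ of objects of $V^{-1}\categ M(X,Y)$ under $s$ has contractible nerve. Nonemptiness comes from functorial factorization of $w$ as $Z \xrightarrow{j} \hat{Z} \xrightarrow{q} X$ with $j$ an acyclic cofibration and $q$ a fibration, whence $q$ is an acyclic fibration by the two-out-of-three axiom. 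The fibrancy of $Y$ enters exactly here: as $j$ is an acyclic cofibration and $Y \to \ast$ a fibration, $f$ extends along $j$ to some $\hat{f} : \hat{Z} \to Y$, yielding a span in $V^{-1}\categ M(X,Y)$ together with the map of spans $s \to (X \xleftarrow{q} \hat{Z} \xrightarrow{\hat{f}} Y)$. Contractibility of the comma category then follows from the functoriality of the factorization, which makes this object homotopy-initial, combined with the standard fact that the space of lifts $\hat{f}$ of an acyclic cofibration against a fibration is contractible.

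\emph{The second map.} The arrow $N(W^{-1}\categ M(X,Y)) \to \hammock \categ M(X,Y)$ realizes a two-arrow zig-zag $X \xleftarrow{\sim} Z \to Y$ as a reduced hammock of width one and length two. Since the hammock mapping space is assembled from reduced zig-zags of arbitrary length, the assertion is that these simplest zig-zags already compute the correct homotopy type. I would deduce this from the fact that a model category admits a homotopy calculus of fractions in the sense of Dwyer and Kan: the factorization and lifting axioms, again using the fibrancy of $Y$, allow any longer reduced zig-zag to be connected, through the morphisms internal to the hammock complex, to one of the two-arrow shape, so that the two-arrow zig-zags are homotopy cofinal among all hammocks. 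This second step is the main obstacle, since making the cofinality precise---verifying that the passage from arbitrary hammocks to two-arrow zig-zags preserves homotopy type---is the technical core of the computation. I would either invoke the explicit comparison via cosimplicial and simplicial resolutions of \cite{DwyerKan80b, DwyerKan80c}, or follow the streamlined treatment of \cite{Hinich16}, in both cases reducing to an inductive verification that the zig-zags of each bounded length are homotopy cofinal inside those of the next length, with the connecting morphisms supplied by functorial factorizations.
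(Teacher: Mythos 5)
A preliminary remark: the paper does not prove this statement at all --- it is imported verbatim from Dwyer--Kan \cite{DwyerKan80b}\cite{DwyerKan80c} (with the subsequent remark pointing to \cite{Cisinski10} and \cite{Nuiten16} for related treatments). So your sketch can only be measured against the literature, not against an argument in the paper.

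Measured that way, there is a genuine gap in your first half, and your second half is a citation rather than a proof. For the first map, the Theorem~A framework, the factorisation $w = q \circ j$ into an acyclic cofibration followed by an (automatically acyclic) fibration, and the use of fibrancy of $Y$ to extend $f$ along $j$ are all correct, and they do show that the comma category $s \downarrow V^{-1}\categ M(X,Y)$ is nonempty --- this correctly locates where fibrancy enters. But the contractibility claim is unsupported, and the reason offered for it fails. A morphism in this comma category must commute with the structure maps to $Y$, not only with those from $Z$ and to $X$. Given an arbitrary object $t = (X \xleftarrow{q'} Z' \xrightarrow{g} Y)$ with $h \colon Z \to Z'$, functorial factorisation does produce a natural zig-zag of spans under $Z$ and over $X$ relating $Z'$ to your $\hat Z$; however, the map to $Y$ on one side is determined by $g$, while on $\hat Z$ it is the single chosen lift $\hat f$, and these have no reason to be compatible. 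Nor can $\hat f$ be chosen naturally in $t$: factorisations are functorial, lifts are not. So "functoriality of the factorization makes this object homotopy-initial" is false as stated; reconciling the two maps to $Y$ by a homotopy realised \emph{inside} the comma category is precisely the hard content of the theorem, and it is what Dwyer--Kan's homotopy calculus of fractions and simplicial-resolution technology is for. The "contractible space of lifts" fact you invoke is itself established with that technology, and it concerns a simplicial set of lifts, not the nerve of this comma category, so it cannot be cited to close the gap. For the second map you identify the correct tool (a model category admits a homotopy calculus of two-sided fractions, under which short zig-zags compute the hammock space), but you then explicitly defer the cofinality argument to \cite{DwyerKan80b}, \cite{DwyerKan80c} or \cite{Hinich16}; that is a citation, not a proof --- which is, to be fair, exactly what the paper itself does for the entire statement.
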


\begin{remark}
Regarding the previous theorem, the reader might have a look at \cite{Cisinski10} and \cite{Nuiten16} which treat the case of categories of fibrant objects and some $\infty$-categorical generalisations.
\end{remark}

\begin{definition}
For any object $X$, let $\categ M \downarrow_V X$ be the category whose objects are acyclic fibrations $f:Z \to X$ targeting $X$ and whose morphisms from $f: Z \to X$ to $f' : Z' \to X$ are morphisms $g :Z \to Z'$ so that $f = f'g$. We also denote $\categ M_c \downarrow_V X$ the full subcategory of $\categ M \downarrow_V X$ spanned by the acyclic fibrations $f:Z \to X$ whose source $Z$ is cofibrant.
\end{definition}

\begin{proposition}\cite{DwyerKan80c}\label{propfinal}
Let $X$ be an object of $\categ M$ and let $(X_n)_{n \in \square}$ be a Reedy cofibrant resolution of $X$ so that the maps $X_n \to X$ are acyclic fibrations. Then the functors
$$
\square^\op \xrightarrow{n \mapsto (X_n \to X)} (\categ M_c \downarrow_V X)^\op
\to (\categ M \downarrow_V X)^\op
$$
are both homotopically cofinal.
\end{proposition}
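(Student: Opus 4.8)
The plan is to verify homotopical cofinality through the comma-category criterion of Quillen's Theorem A: a functor $F\colon\mathcal{A}\to\mathcal{B}$ is homotopically cofinal as soon as, for every $b\in\mathcal{B}$, the comma category $b\downarrow F$ has weakly contractible nerve. I will deal with the composite $\square^\op\to(\categ{M}\downarrow_V X)^\op$ and the first functor $\square^\op\to(\categ{M}_c\downarrow_V X)^\op$ at the same time, since their comma categories obey a single computation, and then obtain the inclusion $(\categ{M}_c\downarrow_V X)^\op\to(\categ{M}\downarrow_V X)^\op$ formally.

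Fix a target object $b=(Z\to X)$ and write $X_\bullet$ for the cocubical object $n\mapsto X_n$. Unwinding the two copies of $(-)^\op$, the comma category $b\downarrow F$ is isomorphic to the opposite of the category of elements $\square\downarrow S$ of the cubical set $S$ whose set of $n$-cubes is
$$
S_n=\{\,h\colon X_n\to Z \ \text{over}\ X\,\},
$$
a cube map $\mu\colon\square[m]\to\square[n]$ acting by precomposition with $X_\mu$. Since a category and its opposite have weakly equivalent nerves, and since $N(\square\downarrow S)\simeq L_\Delta(S)$ by Proposition \ref{propositioneqgrokan}, it is enough to prove that the cubical set $S$ is contractible.

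For this I would consider the map of cubical sets
$$
\mathrm{hom}_{\categ{M}}(X_\bullet,Z)\longrightarrow \mathrm{hom}_{\categ{M}}(X_\bullet,X)
$$
induced by $Z\to X$, where $\mathrm{hom}_{\categ{M}}(X_\bullet,W)$ is the cubical set $n\mapsto\mathrm{hom}_{\categ{M}}(X_n,W)$. By adjunction, its right lifting property against the generating cofibration $\partial\square[n]\to\square[n]$ is equivalent to a lifting problem between the latching map $X_{\partial\square[n]}\to X_n$, where $X_{\partial\square[n]}=\varinjlim_{\square[m]\to\partial\square[n]}X_m$, and the map $Z\to X$. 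As $X_\bullet$ is Reedy cofibrant the latching maps are cofibrations, and as $Z\to X$ is an acyclic fibration these lifts exist; hence the displayed map is an acyclic fibration of cubical sets. Now $S$ is exactly its fibre over the $0$-cube cut out by the structure map $X_0\to X$, so $S\to\ast$ is again an acyclic fibration and $L_\Delta(S)\simeq\ast$. This proves cofinality of the composite; as the cofibrancy of $Z$ was never used, the identical argument applies with $Z$ cofibrant and gives cofinality of the first functor.

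It remains to treat the inclusion $v\colon(\categ{M}_c\downarrow_V X)^\op\to(\categ{M}\downarrow_V X)^\op$, which I would deduce from the two-out-of-three property of homotopically cofinal functors: if $u$ and $v\circ u$ are cofinal, then so is $v$. This is immediate from the characterisation of cofinality via homotopy colimits, for the canonical comparison $\varinjlim^h_{\square^\op}(vu)^{\ast}G\to\varinjlim^h G$ factors through $\varinjlim^h v^{\ast}G$, with the first arrow and the composite both equivalences; taking $u$ to be the first functor settles it. (Alternatively one identifies the comma category of $v$ over $(Z\to X)$ with the category of cofibrant approximations of $Z$ over $X$, contractible by \cite{DwyerKan80c}.) The only non-formal step, and the main obstacle, is the third paragraph: one must correctly identify the matching object of $\mathrm{hom}_{\categ{M}}(X_\bullet,-)$ with $\mathrm{hom}_{\categ{M}}(X_{\partial\square[n]},-)$ and reduce acyclicity of the map on cubical hom-objects to the single lifting problem against the Reedy latching cofibration; everything else is bookkeeping with opposite categories and Proposition \ref{propositioneqgrokan}.
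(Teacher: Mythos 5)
Your proof is correct, and it takes a genuinely different route from the paper: the paper does not prove the substantive part at all, but cites Dwyer--Kan for the homotopy cofinality of the composite functor (remarking that their argument, written for $\Delta$, adapts to $\square$), notes that the same argument handles the first functor, and then deduces the cofinality of the inclusion $(\categ M_c \downarrow_V X)^\op \to (\categ M \downarrow_V X)^\op$ by the two-out-of-three property --- exactly the final step you also use. What you do differently is replace the citation by a direct, cube-native argument: identify the comma category over $(Z \to X)$ with $(\square \downarrow S)^\op$ for the cubical set $S_n = \{h \colon X_n \to Z \text{ over } X\}$, invoke Proposition \ref{propositioneqgrokan} to reduce to contractibility of $S$, and obtain that contractibility from the fact that $\hom_{\categ M}(X_\bullet, Z) \to \hom_{\categ M}(X_\bullet, X)$ has the right lifting property against the generating cofibrations $\partial\square[n] \to \square[n]$, since by adjunction these lifting problems are exactly lifting problems of the Reedy latching cofibrations $X_{\partial\square[n]} \to X_n$ against the acyclic fibration $Z \to X$. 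This buys self-containedness (no ``mutatis mutandis'' transport from simplicial to cubical resolutions), and it makes transparent why cofibrancy of $Z$ is irrelevant, so that both functors are handled by one computation. Two points deserve to be made explicit if this were incorporated: first, the identification of the latching object $L_n(X_\bullet)$ with $X_{\partial\square[n]}$ --- which you correctly flag as the non-formal step --- uses that $\square$ is an Eilenberg--Zilber--type Reedy category (this is consistent with the paper's own implicit usage, e.g.\ in the equivalence of conditions (3) and (4) of its proposition characterising homotopical square structures and in the proof of Lemma \ref{lemmacubicalapprox}); second, your two-out-of-three step needs the converse direction of the cofinality theorem (preservation of all homotopy colimits implies contractible comma categories), which follows by applying Thomason's theorem (Theorem \ref{thmthomason}) to the discrete diagrams $c' \mapsto \hom(c,c')$, and is standard but worth a word.
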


\begin{remark}
Actually, Dwyer and Kan proved that the composite map is homotopically cofinal (actually, they used $\Delta$ instead of $\square$ but the proof adapts mutatis mutandis). But the same arguments show that the first map is also homotopically cofinal. Since the first map and the composite map are both homotopically cofinal, so is the second map.
\end{remark}

\begin{lemma}
We have a canonical isomorphism of categories
$$
V^{-1}\categ M(X,Y) =
\int^t_{Z \in \categ M \downarrow_V X} \hom_{\categ M}(Z,Y) .
$$
\end{lemma}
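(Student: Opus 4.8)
The plan is to construct the isomorphism tautologically, by unwinding both sides and checking that objects, morphisms, units and composites correspond; the only conceptual point is that the contravariance of $\hom_{\categ M}(-,Y)$ in its first variable is exactly what the transposed Grothendieck construction is designed to absorb.

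First I would identify the objects. An object of $V^{-1}\categ M(X,Y)$ is a span $X \xleftarrow{f} Z \xrightarrow{g} Y$ whose left leg $f$ is an acyclic fibration. To give the left leg $f : Z \to X$ is exactly to give an object $(Z,f)$ of $\categ M \downarrow_V X$, and to give the right leg is then to give an element $g \in \hom_{\categ M}(Z,Y)$. Hence $(X \xleftarrow{f} Z \xrightarrow{g} Y) \mapsto ((Z,f),g)$ is a bijection onto the objects of $\int^t_{Z \in \categ M \downarrow_V X} \hom_{\categ M}(Z,Y)$, whose objects are by definition pairs of an object $Z$ of $\categ M \downarrow_V X$ together with an element $g$ of the presheaf $\hom_{\categ M}(-,Y)$ evaluated at $Z$.

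Next I would match morphisms, where the transpose becomes essential. A morphism of spans from $(Z,f,g)$ to $(Z',f',g')$ is a map $h : Z \to Z'$ with $f = f'h$ and $g = g'h$. I would read the first equation $f=f'h$ as the statement that $h$ is a morphism $(Z,f) \to (Z',f')$ in $\categ M \downarrow_V X$, and the second equation $g = g'h = h^*(g')$ as the compatibility condition in the Grothendieck construction: since $\hom_{\categ M}(-,Y)$ is a presheaf, the structure map attached to $h$ is the precomposition $h^* : \hom_{\categ M}(Z',Y) \to \hom_{\categ M}(Z,Y)$, and a morphism $((Z,f),g) \to ((Z',f'),g')$ of the transposed construction is precisely a morphism $h$ of $\categ M \downarrow_V X$ together with the equality $h^*(g') = g$. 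The role of the transpose is to reverse the variance so that these morphisms run along the underlying maps $h : Z \to Z'$, in the same direction as morphisms of spans, rather than against them. I would fix the sign conventions by comparison with the already established identity $\square \downarrow X = \int^t_{n \in \square^{\op}} X_n$, where the morphisms of $\square \downarrow X$ are likewise the forward cube maps $\phi$ satisfying $X(\phi)(y)=x$.

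Finally I would check functoriality, which is immediate: the identity span morphism is $h = \id_Z$, and composition of span morphisms is composition of the underlying maps $Z \to Z' \to Z''$, which coincides with composition in $\int^t$. Since the correspondence is bijective on objects and on each hom-set and preserves units and composition, it is an isomorphism---indeed an equality after these identifications---of categories, as claimed. I expect the only delicate step to be the bookkeeping of variance in the morphism matching; everything else is a direct translation of definitions, and the acyclic-fibration condition on $f$ plays no role beyond being shared by both descriptions.
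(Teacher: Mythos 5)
Your proof is correct and matches the paper's intent exactly: the paper dismisses this lemma as ``Straightforward,'' and your argument is precisely the definitional unwinding that justifies it, with the variance bookkeeping (the transpose making morphisms run forward along $h : Z \to Z'$, so that the condition $h^*(g') = g$ and $f'h = f$ reproduces morphisms of spans) handled correctly and calibrated against the paper's own convention $\square \downarrow X = \int^t_{n \in \square^\op} X_n$. Nothing is missing.
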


\begin{proof}
Straightforward.
\end{proof}

\subsection{Detecting localisation}

Let $\categ M$ be a model category.

\begin{lemma}\label{lemmacolimhammock}
For any two objects $X,Y$ where $Y$ is fibrant, the following morphism
$$
\varinjlim_{Z \in (\categ M_{c} \downarrow_V X)^\op}^h \hom_{\categ M_{cf}}(Z,Y)
\to \varinjlim_{Z \in (\categ M_{c} \downarrow_V X)^\op}^h \mathbb L_W\categ M_{cf}(Z,Y)
$$
is an equivalence.
\end{lemma}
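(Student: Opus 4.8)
The plan is to identify each of the two homotopy colimits with the mapping space of the localisation from $X$ to $Y$, and then to check that the map in the statement realises the canonical comparison between these two models; the genuine content is this last compatibility, everything else being an application of the machinery already set up. Note first that, since we form $\hom_{\categ M_{cf}}(Z,Y)$ and $\mathbb L_W\categ M_{cf}(Z,Y)$, the objects $Z$ and $Y$ lie in $\categ M_{cf}$; as each structure map $Z \to X$ is a fibration this forces $X$ to be fibrant, and then every object of $\categ M_c \downarrow_V X$ is automatically fibrant–cofibrant. The case relevant to the main theorem is $X,Y \in \categ M_{cf}$; in general one simply fixes a cofibrant resolution of $X$ as a basepoint.

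I would treat the right-hand side first. The diagram $Z \mapsto \mathbb L_W\categ M_{cf}(Z,Y)$ sends every morphism to a weak equivalence: a morphism of $\categ M_c \downarrow_V X$ is a map over $X$ between two acyclic fibrations, hence a weak equivalence by the $2$-out-of-$3$ property, so its image in $\mathbb L_W\categ M_{cf}$ becomes an equivalence and precomposition with it induces an equivalence of mapping spaces. Moreover the indexing category $(\categ M_c \downarrow_V X)^\op$ has contractible nerve: by Proposition \ref{propfinal} it is homotopically cofinal over $(\categ M \downarrow_V X)^\op$, and the latter has the initial object $\id_X$ (the identity is an acyclic fibration). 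A homotopy colimit over a category with contractible nerve of a diagram inverting every morphism is equivalent to the value at any vertex, so
$$
\varinjlim^h_{Z \in (\categ M_c \downarrow_V X)^\op} \mathbb L_W\categ M_{cf}(Z,Y) \xrightarrow{\ \sim\ } \mathbb L_W\categ M_{cf}(X,Y),
$$
the mapping space of the localisation.

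For the left-hand side the diagram $Z \mapsto \hom_{\categ M_{cf}}(Z,Y)$ is valued in sets, regarded as discrete simplicial sets. Corollary \ref{corollaryhomotopycolimit} identifies its homotopy colimit with the nerve of the transposed Grothendieck construction $\int^t_Z \hom_{\categ M_{cf}}(Z,Y)$, which, as in the canonical identification $V^{-1}\categ M(X,Y) = \int^t_{Z}\hom_{\categ M}(Z,Y)$, is the category of spans $X \xleftarrow{\sim} Z \to Y$ with $Z$ cofibrant and left leg an acyclic fibration — the cofibrant variant of $V^{-1}\categ M(X,Y)$. By the cofinality of cofibrant resolutions (Proposition \ref{propfinal}) its nerve is weakly equivalent to $N(V^{-1}\categ M(X,Y))$, and since $Y$ is fibrant Theorem \ref{thmmappingspaceloc} identifies this with $\hammock\categ M(X,Y)$. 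Finally the inclusion $\categ M_{cf} \hookrightarrow \categ M$ induces an equivalence of localisations, so $\hammock\categ M(X,Y) \simeq \hammock\categ M_{cf}(X,Y)$, and the Corollary asserting that $\mathbb L_W\categ M_{cf} \to \mathrm{Ex}^\infty(\hammock\categ M_{cf})$ is an equivalence gives $\hammock\categ M_{cf}(X,Y) \simeq \mathbb L_W\categ M_{cf}(X,Y)$. Thus both sides are abstractly equivalent to $\mathbb L_W\categ M_{cf}(X,Y)$.

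The main obstacle is to see that the map in the statement is exactly this comparison, and not merely that the two sides are abstractly equivalent. The map is induced by the localisation functor $\categ M_{cf} \to \mathbb L_W\categ M_{cf}$ applied termwise and then pushed through the homotopy colimit. I would carry the bookkeeping of Corollary \ref{corollaryhomotopycolimit} through far enough to check that, under the span description of the left-hand side, this induced map sends a span $X \xleftarrow{\sim} Z \to Y$ to the morphism $X \to Y$ of $\mathbb L_W\categ M_{cf}$ obtained by inverting the left leg; this is precisely the Dwyer--Kan comparison between the hammock mapping space and the mapping space of the localisation. Once this naturality is in place, the identifications of the two previous paragraphs show that the map of the statement realises the equivalence $\hammock\categ M(X,Y) \simeq \mathbb L_W\categ M_{cf}(X,Y)$, which completes the proof.
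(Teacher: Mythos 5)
Your identifications of the two sides are individually correct, and in places more detailed than the paper's own proof (for instance your justification, via Proposition \ref{propfinal} and the initial object of $(\categ M \downarrow_V X)^\op$, that the index category is contractible). But the proposal has a genuine gap, and it sits exactly at the step you yourself call the main obstacle: the claim that the map of the statement realises the comparison between your two abstract identifications is never proved, only announced as bookkeeping to be carried out. That step \emph{is} the lemma. The difficulty is structural: your identification of the right-hand side with $\mathbb L_W\categ M_{cf}(X,Y)$ rests on the abstract principle that a diagram inverting all morphisms over a contractible index category has homotopy colimit equivalent to its value at any vertex, and this principle yields no explicit map \emph{out of} the homotopy colimit --- only the inclusion of the value at the vertex $(X,\mathrm{id}_X)$ into it (which, incidentally, requires $X$ to be cofibrant). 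So there is nothing concrete to compose the statement's map with; verifying your compatibility means showing that a triangle commutes up to homotopy in which one side is only specified up to homotopy. Concretely, the statement's map sends a span $(\phi : Z \to X,\ g : Z \to Y)$ to the point $\pi(g) \in \mathbb L_W\categ M_{cf}(Z,Y)$ placed at the vertex $(Z,\phi)$, whereas the Dwyer--Kan comparison produces $\pi(g)\circ\pi(\phi)^{-1}$ placed at $(X,\mathrm{id}_X)$; relating the two inside the homotopy colimit requires transporting along the index category (say via sections $s : X \to Z$ of the acyclic fibrations $\phi$, which exist because $X$ is cofibrant) and constructing homotopies $\pi(g)\circ\pi(s) \simeq \pi(g)\circ\pi(\phi)^{-1}$, coherently in $(Z,\phi)$. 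Without this, you have only shown that source and target are abstractly equivalent, which does not imply that the given map is an equivalence.

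The paper organises the proof so that no such coherence check against abstract identifications is ever needed, and the comparison shows how your gap could be filled. It works first in the hammock localisation, where composition with the reversed one-column hammock of $\phi$ gives an \emph{explicit} transformation from the diagram $\hammock\categ M_{cf}(-,Y)$ to the constant diagram at $\hammock\categ M_{cf}(X,Y)$; the composite
$$
\varinjlim^h_{(\categ M_{c} \downarrow_V X)^\op} \hom_{\categ M_{cf}}(-,Y)
\longrightarrow
\varinjlim^h_{(\categ M_{c} \downarrow_V X)^\op} \hammock\categ M_{cf}(-,Y)
\longrightarrow
\hammock\categ M_{cf}(X,Y)
$$
is, simplex by simplex, the map sending a span to the corresponding hammock, i.e.\ the known equivalence of Theorem \ref{thmmappingspaceloc}; since the second arrow is an equivalence (objectwise it is composition with a homotopy-invertible morphism, over a contractible index), 2-out-of-3 gives that the first arrow is an equivalence. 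The passage from $\hammock$ to $\mathbb L_W$ is then strict rather than abstract: the factorisation $\categ M_{cf} \to \mathbb L_W\categ M_{cf} \to \mathrm{Ex}^\infty(\hammock\categ M_{cf})$, whose second map is an equivalence of simplicial categories, produces a commuting square of homotopy colimits in which three sides are equivalences and the fourth is the map of the statement. To complete your proposal, either carry out the transport-and-homotopies argument in full, or reorganise it along these lines.
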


\begin{proof}
We have a morphism of simplicial functors from $(\categ M_{c} \downarrow_V X)^\op$ to simplicial sets
from
$ L^H_W\categ M_{cf}(-,Y)
$
to the constant functor 
$
L^H_W\categ M_{cf}(X,Y)$
that consists for an element $(Z, \phi)$ in composing with $\phi^{-1}$. The resulting map
$$
\varinjlim_{(\categ M_{c} \downarrow_V X)^\op}^h \hom_{\categ M_{cf}}(-,Y)
\to \varinjlim_{(\categ M_{c} \downarrow_V X)^\op}^h L^H_W\categ M_{cf}(-,Y)
\to \varinjlim_{(\categ M_{c} \downarrow_V X)^\op}^h L^H_W\categ M_{cf}(X,Y)
\to L^H_W\categ M_{cf}(X,Y)
$$
is the equivalence described in Theorem \ref{thmmappingspaceloc}. Moreover, the middle arrow is an equivalence as well as the right arrow since the category $(\categ M_{c} \downarrow_V X)^\op$ is contractible. Hence, the first arrow is an equivalence. One can conclude by applying the 2-out-of-3 rule to the following square diagram
$$
\begin{tikzcd}
\varinjlim_{(\categ M_{c} \downarrow_V X)^\op}^h \hom_{\categ M_{cf}}(-,Y)
\ar[r] \ar[d]
& \varinjlim_{(\categ M_{c} \downarrow_V X)^\op}^h L^H_W\categ M_{cf}(-,Y)
\ar[d]
\\
\varinjlim_{(\categ M_{c} \downarrow_V X)^\op}^h \mathbb L_W\categ M_{cf}(-,Y)
\ar[r]
& \varinjlim_{(\categ M_{c} \downarrow_V X)^\op}^h \mathrm{Ex}^\infty L^H_W\categ M_{cf}(-,Y).
\end{tikzcd}
$$
\end{proof}

\begin{theorem}\label{thmcondition}
Let us consider a functor between simplicial categories (or a functor between cubical categories) $\rho : \categ M_{cf} \to \categ D$. Let us suppose that
\begin{enumerate}
    \item the functor $h(\rho)$ sends morphisms in $W$ to isomorphisms;
    \item the functor $h(\rho)$ is essentially surjective on objects;
    \item for any two objects $X,Y$ in $\categ M_{cf}$, the map
    $$
    \varinjlim_{Z \in (\categ M_{c} \downarrow_V X)^\op}^h \hom_{\categ M_{cf}}(Z,Y) \to  \varinjlim_{Z \in (\categ M_c \downarrow_V X)^\op}^h \categ D(Z,Y)
    $$
    is an equivalence.
\end{enumerate}
Then, $\rho$ is a localisation.
\end{theorem}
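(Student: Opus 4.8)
The plan is to factor $\rho$ through the formal localisation $\mathbb L_W \categ M_{cf}$ and to prove that the resulting functor is a Dwyer--Kan equivalence; combined with the fact that $\pi \colon \categ M_{cf} \to \mathbb L_W \categ M_{cf}$ is a localisation, this gives the result. I treat the simplicial case, the cubical one reducing to it by applying $L_\Delta$ levelwise, since $h$, $\mathcal N$ and the homotopy colimits are all defined through $L_\Delta$. I also replace $\categ D$ by a fibrant simplicial category, which affects neither the hypotheses (each is invariant under Dwyer--Kan equivalence) nor the conclusion.

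First I would build a functor $\bar\rho \colon \mathbb L_W \categ M_{cf} \to \categ D$ with $\bar\rho \circ \pi \simeq \rho$. By hypothesis (1) each $\rho(f)$, $f \in W$, is an equivalence of $\categ D$, so by Lemma~\ref{lemmalift} the corresponding square admits a lift $\mathfrak C N(\mathrm{Iso}) \to \categ D$; these lifts together with $\rho$ are compatible with the span defining the homotopy pushout $\mathbb L_W \categ M_{cf}$ and hence determine $\bar\rho$. As $\pi$ is bijective on objects and $h(\rho)$ is essentially surjective by hypothesis (2), $h(\bar\rho)$ is essentially surjective. So everything reduces to showing that $\bar\rho$ is fully faithful, i.e. that $\mathbb L_W \categ M_{cf}(X,Y) \to \categ D(\rho X, \rho Y)$ is a weak equivalence for all $X,Y$.

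Fix $X,Y \in \categ M_{cf}$ and index over the category $\categ J = (\categ M_c \downarrow_V X)^\op$, which is contractible (as used in Lemma~\ref{lemmacolimhammock}). Since $X$ is fibrant, every object $Z \to X$ of $\categ J$ has $Z$ fibrant-cofibrant, and every morphism of $\categ J$ is a weak equivalence by $2$-out-of-$3$. Hence the diagrams $Z \mapsto \mathbb L_W \categ M_{cf}(Z,Y)$ and $Z \mapsto \categ D(\rho Z, \rho Y)$ have all transition maps equivalences, and composing with the inverses of the images of the equivalences $Z \to X$ (exactly as in Lemma~\ref{lemmacolimhammock}) gives equivalences
$$
\varinjlim_{\categ J}^h \mathbb L_W \categ M_{cf}(Z,Y) \xrightarrow{\ \sim\ } \mathbb L_W \categ M_{cf}(X,Y), \qquad \varinjlim_{\categ J}^h \categ D(\rho Z, \rho Y) \xrightarrow{\ \sim\ } \categ D(\rho X, \rho Y),
$$
both compatible with $\bar\rho$. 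Now consider the triangle
$$
\begin{tikzcd}[row sep=small]
& \varinjlim_{\categ J}^h \mathbb L_W \categ M_{cf}(Z,Y)
\ar[dd, "\bar\rho"]
\\
\varinjlim_{\categ J}^h \hom_{\categ M_{cf}}(Z,Y)
\ar[ru, "\sim"] \ar[rd, "\sim"']
&
\\
& \varinjlim_{\categ J}^h \categ D(\rho Z, \rho Y)
\end{tikzcd}
$$
whose upper edge is the equivalence of Lemma~\ref{lemmacolimhammock}, whose lower edge is hypothesis (3), and which commutes because $\bar\rho \circ \pi \simeq \rho$. By $2$-out-of-$3$ the vertical map is an equivalence; combined with the two displayed equivalences and their compatibility with $\bar\rho$, this shows that $\bar\rho \colon \mathbb L_W \categ M_{cf}(X,Y) \to \categ D(\rho X, \rho Y)$ is a weak equivalence.

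Fully faithfulness together with essential surjectivity makes $\bar\rho$ a weak equivalence for the Bergner model structure, so $\mathcal N(\bar\rho)$ is an equivalence of quasi-categories; since $\pi$ is a localisation and $\rho \simeq \bar\rho \circ \pi$ with $\bar\rho$ an equivalence, the functor $\rho$ is a localisation. The main obstacle I anticipate is the bookkeeping for the two contractibility equivalences: one must check that the transition maps of both diagrams over $\categ J$ are genuine equivalences --- this is exactly where fibrancy of $X$ (forcing each $Z$ fibrant-cofibrant) and $2$-out-of-$3$ enter --- and that the comparisons are natural enough in $\bar\rho$ for the large triangle and the final square to commute up to the coherent homotopy $\bar\rho \circ \pi \simeq \rho$. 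The construction of $\bar\rho$ out of the homotopy pushout via Lemma~\ref{lemmalift} is the other delicate point, since it is well defined only up to homotopy and the lifts must be chosen compatibly.
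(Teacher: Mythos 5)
Your proof is correct and is essentially the paper's own argument: factor $\rho$ through $\mathbb L_W \categ M_{cf}$, use hypothesis (3), Lemma \ref{lemmacolimhammock} and the 2-out-of-3 rule to compare the homotopy colimits, then use contractibility of $(\categ M_{c} \downarrow_V X)^\op$ and the fact that all transition maps become equivalences to identify those homotopy colimits with the mapping spaces at $X$, concluding that the induced functor $\mathbb L_W \categ M_{cf} \to \categ D$ is a Dwyer--Kan equivalence. The only difference is cosmetic: where you map the varying diagrams onto the constant one by composing with (homotopy) inverses of the images of the maps $Z \to X$ --- precisely the coherence issue you flag at the end --- the paper instead maps the constant diagram at $X$ into the varying diagrams by precomposition with the actual structure maps, which is strictly natural and avoids choosing inverses.
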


\begin{proof}
Let us suppose that $\categ D$ is a simplicial category (in the cubical case, it suffices to replace $\categ D$ by $L_\Delta(\categ D)$).
We can suppose that
\begin{itemize}
    \itemt $\categ D$ has the same set of objects as $\categ M_{cf}$, and $\rho$ is the identity on objects; if it is not the case, we can replace $\categ D$ by $\categ D'$ whose set of objects is $\Ob(\categ M_{cf})$ and so that
    $$
    \categ D'(x,y) = \categ D(\rho(x), \rho(y));
    $$
    then the canonical map $\categ D \to \categ D'$ is an equivalence of simplicial categories;
    \itemt $\categ D$
    is fibrant; if it is not the case, we can replace $\categ D$ by $\mathrm{Ex}^\infty(\categ D)$.
\end{itemize}

By condition (1), the simplicial functor ${\rho}$
factorises as
$$
\categ M_{cf} \xrightarrow{\pi} \mathbb{L}_W \categ M_{cf}
\xrightarrow{\xi} \categ D .
$$

Let $X, Y$ be two objects in $\categ M_{cf}$. Let us consider the following diagram of simplicial sets
$$
\begin{tikzcd}
\varinjlim_{ (\categ M_{c} \downarrow_V X)^\op}^h\categ M_{cf}(, Y) 
\ar[r] \ar[d]
& \varinjlim_{ (\categ M_{c} \downarrow_V X)^\op}^h \mathbb L_W\categ M_{cf}(-, Y)
\ar[d]
\\
\varinjlim_{ (\categ M_{c} \downarrow_V X)^\op}^h \categ D(-, Y)
\ar[r, equal]
& \varinjlim_{ (\categ M_{c} \downarrow_V X)^\op}^h \categ D(-, Y)
\end{tikzcd}
$$
The left vertical arrow is an equivalence by hypothesis. The top horizontal arrow is an equivalence by Lemma \ref{lemmacolimhammock}. Hence, the right vertical arrow is also an equivalence.

Now, let us consider the following square diagram
$$
\begin{tikzcd}
\mathbb L_W\categ M_{cf}(X, Y)
\ar[r] \ar[d]
&
\categ D(X, Y)
\ar[d]
\\
\varinjlim_{ (\categ M_{c} \downarrow_V X)^\op}^h \mathbb L_W\categ M_{cf}(X, Y)
\ar[r]
\ar[d]
&  \varinjlim_{ (\categ M_{c} \downarrow_V X)^\op}^h \categ D(X, Y)
\ar[d]
\\
\varinjlim_{ (\categ M_{c} \downarrow_V X)^\op}^h \mathbb L_W\categ M_{cf}(-, Y)
\ar[r]
& \varinjlim_{ (\categ M_{c} \downarrow_V X)^\op}^h \categ D(-, Y) 
\end{tikzcd}
$$
where the top vertical maps are induced by the inclusion of $\{X\}$ into $(\categ M_c \downarrow_V X)^\op$. These top vertical arrows are weak equivalences since $(\categ M_{c} \downarrow_V X)^\op$ is contractible. The lower vertical maps are equivalences since they proceeds from equivalences of diagrams. Hence, by the 2-out-of-3 rule, the top horizontal map is a weak equivalence. Thus, the simplicial functor $\xi$ is an equivalence.
\end{proof}

\subsection{Localisation of squared model categories}

Let $\categ M$ be a model category equipped with an homotopical left square structure $Q$.

\begin{lemma}\label{lemmacubicalapprox}
Let $X$ be an object of $\categ M$ and let $(X_n)_{n \in \square}$ be a Reedy cofibrant resolution of $X$. Then, for any fibrant object $Y$ the canonical functor
$$
\square \downarrow \hom_{\categ M}(X_-, Y) \to W^{-1}\categ M (X,Y)
$$
is an homotopy equivalence of categories.
\end{lemma}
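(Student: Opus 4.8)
The plan is to recognise both the source and the target of the canonical functor as (transposed) Grothendieck constructions, and then to deduce the statement from the homotopical cofinality furnished by Proposition \ref{propfinal} together with Theorem \ref{thmmappingspaceloc}. The square structure $Q$ itself plays no role in this particular step; only the general machinery of the previous subsections is needed.

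First I would unpack the canonical functor. By the identification of $\square \downarrow K$ with a transposed Grothendieck construction, the source is
$$
\square \downarrow \hom_{\categ M}(X_-, Y) = \int^t_{n \in \square^\op} \hom_{\categ M}(X_n, Y),
$$
whose objects are pairs $(\square[n], f\colon X_n \to Y)$. Since $(X_n)_{n \in \square}$ is a Reedy cofibrant resolution, we may and do arrange that the structure maps $p_n\colon X_n \to X$ are acyclic fibrations and that each $X_n$ is cofibrant; the canonical functor then sends $(\square[n], f)$ to the span $X \xleftarrow{p_n} X_n \xrightarrow{f} Y$, which lies in $V^{-1}\categ M(X,Y)$. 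In view of the identification $V^{-1}\categ M(X,Y) = \int^t_{Z \in \categ M \downarrow_V X}\hom_{\categ M}(Z,Y)$, the canonical functor factors as
$$
\int^t_{n \in \square^\op}\hom_{\categ M}(X_n, Y) \to \int^t_{Z \in \categ M \downarrow_V X} \hom_{\categ M}(Z,Y) = V^{-1}\categ M(X,Y) \hookrightarrow W^{-1}\categ M(X,Y),
$$
where the first map is precisely the map of transposed Grothendieck constructions induced on base categories by the functor $G\colon \square^\op \to (\categ M \downarrow_V X)^\op$, $n \mapsto (p_n\colon X_n \to X)$, of Proposition \ref{propfinal}, together with the tautological identity $\hom_{\categ M}(X_-, Y) = G^*\hom_{\categ M}(-,Y)$ exhibiting the fibre diagram over $\square^\op$ as the pullback along $G$ of the fibre diagram over $(\categ M \downarrow_V X)^\op$.

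Next I would pass to homotopy colimits. By Corollary \ref{corollaryhomotopycolimit}, taking nerves turns each transposed Grothendieck construction into the homotopy colimit of (the nerve of) its fibre diagram, so the first map becomes the comparison
$$
\varinjlim_{\square^\op}^h G^*\hom_{\categ M}(-,Y) \to \varinjlim_{(\categ M \downarrow_V X)^\op}^h \hom_{\categ M}(-, Y)
$$
induced by $G$. Since $G$ is homotopically cofinal by Proposition \ref{propfinal}, this comparison of homotopy colimits is a weak equivalence, whence $N\bigl(\square \downarrow \hom_{\categ M}(X_-, Y)\bigr) \to N\bigl(V^{-1}\categ M(X,Y)\bigr)$ is a weak equivalence of simplicial sets. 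Finally, because $Y$ is fibrant, Theorem \ref{thmmappingspaceloc} gives that the inclusion induces a weak equivalence $N(V^{-1}\categ M(X,Y)) \to N(W^{-1}\categ M(X,Y))$; composing the two equivalences shows that the nerve of the canonical functor is a weak equivalence, i.e. that it is a homotopy equivalence of categories.

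The genuinely delicate point is not any deep argument but the bookkeeping of opposite categories and variances: one must check that the presheaf $\hom_{\categ M}(-,Y)$ on $\categ M \downarrow_V X$ pulls back along $G$ to the cubical set $\hom_{\categ M}(X_-, Y)$ and that the resulting map of transposed Grothendieck constructions really coincides with the canonical functor of the statement, and one must ensure that the chosen resolution has acyclic fibrations with cofibrant source as its structure maps so that Proposition \ref{propfinal} applies. Once these compatibilities are nailed down, the conclusion is immediate from cofinality and Theorem \ref{thmmappingspaceloc}.
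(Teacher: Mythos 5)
Your argument from the point where the augmentation maps are acyclic fibrations onwards is sound, and it is essentially the second half of the paper's own proof: identify both sides as transposed Grothendieck constructions, invoke the cofinality of Proposition \ref{propfinal} (via Corollary \ref{corollaryhomotopycolimit}), and finish with Theorem \ref{thmmappingspaceloc}. But the step ``we may and do arrange that the structure maps $p_n\colon X_n \to X$ are acyclic fibrations'' is a genuine gap. The lemma is stated for an \emph{arbitrary} Reedy cofibrant resolution $(X_n)_{n\in\square}$, whose augmentations are only weak equivalences (compare Proposition \ref{propfinal}, which has to impose the acyclic-fibration condition as an \emph{extra} hypothesis). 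The canonical functor in the statement is attached to that given resolution, so you are not free to replace the resolution by a nicer one: doing so changes both the source category and the functor whose nerve must be shown to be an equivalence. This generality is not decorative; in the proof of the main theorem the lemma is applied to the resolution $(Q_n Z)_n$ produced by the homotopical square structure, and there is no reason for the maps $Q_n Z \to X$ to be fibrations. Without your extra hypothesis the functor does not even factor through $V^{-1}\categ M(X,Y)$, so Proposition \ref{propfinal} cannot be applied directly, and your proof only establishes a special case of the statement.

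The paper closes exactly this gap before running your argument: it factors the map of cocubical objects $(X_n)_{n\in\square} \to X$ as a Reedy acyclic cofibration $(X_n)_n \to (X'_n)_n$ followed by a Reedy acyclic fibration $(X'_n)_n \to X$, so that $(X'_n)_n$ is a resolution satisfying your hypotheses. It then relates the two canonical functors by a square commuting up to a natural transformation, and observes that the comparison $\square \downarrow \hom_{\categ M}(X'_-, Y) \to \square \downarrow \hom_{\categ M}(X_-, Y)$ is a homotopy equivalence because, $Y$ being fibrant, the map of cubical sets $(\hom_{\categ M}(X'_n,Y))_n \to (\hom_{\categ M}(X_n,Y))_n$ is an acyclic fibration. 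Your cofinality argument then applies to $(X'_n)_n$, and the 2-out-of-3 property transfers the conclusion to $(X_n)_n$. Adding this reduction step would make your proof complete and essentially identical to the paper's.
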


\begin{proof}
One can factorise the morphism $(X_n)_{n \in \square} \to X$ of cocubical objects in $\categ M$ into a Reedy acylic cofibration followed by a Reedy acylic fibration
$$
(X_n)_{n \in \square} \to (X'_n)_{n \in \square} \to X .
$$
In particular $(X'_n)_{n \in \square}$ is Reedy cofibrant and the maps $X'_n \to X$ are acyclic fibrations for any $n\geq 0$.

On the one hand, the following diagram of categories commutes up to a canonical natural transformation induced by the maps $X_n \to X'_n$
$$
\begin{tikzcd}
\square \downarrow \hom_{\categ M}(X'_-, Y)
\ar[r] \ar[d]
& \square \downarrow \hom_{\categ M}(X_-, Y)
\ar[d]
\\
V^{-1}\categ M (X,Y)
\arrow[ru, Rightarrow,shorten >= 10pt, shorten <=10pt]
\ar[r]
& W^{-1}\categ M (X,Y) .
\end{tikzcd}
$$
We can notice that the bottom horizontal map is an homotopy equivalence.

On the other hand, since the map $(X_n)_{n \in \square} \to (X'_n)_{n \in \square}$ is a Reedy acyclic cofibration, then the morphism of cubical sets $(\hom_{\categ M}(X'_n,Y))_{n \in \square^\op}\to (\hom_{\categ M}(X_n,Y))_{n \in \square^\op}$ is an acylic fibration. Thus, the functor $\square \downarrow \hom_{\categ M}(X'_-, Y) \to \square \downarrow \hom_{\categ M}(X_-, Y)$ is an homotopy equivalence.

To conclude, it suffices to prove that the morphism $\square \downarrow \hom_{\categ M}(X'_-, Y)
\to
V^{-1}\categ M (X,Y)$ is an equivalence, which follows from Proposition \ref{propfinal}.
\end{proof}

\begin{theorem}
The functor of cubical categories
$$
\categ M_{cf} \to \categ M_{cf}^Q
$$
is a localisation with respect to weak equivalences.
\end{theorem}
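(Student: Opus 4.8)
The plan is to apply the recognition criterion of Theorem \ref{thmcondition} to the canonical functor $\rho : \categ M_{cf} \to \categ M_{cf}^Q$, which is the identity on objects. Condition (2) of that theorem is then immediate, since $h(\rho)$ is already surjective on objects, hence essentially surjective. The bulk of the work is to verify condition (3), and condition (1) will follow as a byproduct.

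The starting observation is that the mapping cubical set $\categ M^Q(Z,Y)$ is nothing but $\hom_{\categ M}(Q_\bullet Z, Y)$, the cubical set attached to the cocubical object $n \mapsto Q_n Z$. I would first argue that for cofibrant $Z$ this cocubical object, together with its augmentation to the constant object $Z$, is a Reedy cofibrant resolution of $Z$: Reedy cofibrancy comes from the pushout--product form of the homotopical axioms (the functor $Z \mapsto Q_\bullet Z$ sends cofibrations to Reedy cofibrations), while the augmentation $Q_n Z \to Z$ is a levelwise weak equivalence because $A \mapsto Q_A(Z)$ carries the weak equivalence $\square[n] \to \square[0]$ of cubical sets to a weak equivalence $Q_n Z \to Q_0 Z$, followed by $Q_0 Z \xrightarrow{\sim} Z$ from axiom (2). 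Granting this, Lemma \ref{lemmacubicalapprox} applies with $X_- = Q_\bullet Z$ and yields, for every fibrant $Y$, a homotopy equivalence $\square \downarrow \categ M^Q(Z,Y) \to W^{-1}\categ M(Z,Y)$, natural in $Z$; equivalently $L_\Delta \categ M^Q(Z,Y) \simeq N(W^{-1}\categ M(Z,Y))$.

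For condition (3) I would then compute both homotopy colimits over $J = (\categ M_c \downarrow_V X)^\op$. Using Corollary \ref{corollaryhomotopycolimitcube} the right-hand colimit becomes $N(\int^t_{Z \in J} \square \downarrow \categ M^Q(Z,Y))$, and the fibrewise homotopy equivalences above identify the diagram $Z \mapsto \square \downarrow \categ M^Q(Z,Y)$ with $Z \mapsto W^{-1}\categ M(Z,Y)$. Since each $Z \to X$ is a weak equivalence, this latter diagram is objectwise equivalent to the constant diagram $\hammock\categ M(X,Y)$ (via Theorem \ref{thmmappingspaceloc}), and as $J$ is contractible its homotopy colimit is $\hammock\categ M(X,Y)$. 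The left-hand colimit is treated exactly as in Lemma \ref{lemmacolimhammock}, and the comparison map is precisely the equivalence of Theorem \ref{thmmappingspaceloc}, so condition (3) holds. Finally, condition (1) follows from the same identification: on $\pi_0$ the equivalence $L_\Delta \categ M^Q(X,Y) \simeq \hammock\categ M(X,Y)$ realises $h(\categ M_{cf}^Q)$ as the homotopy category of $\categ M$, in which weak equivalences are invertible, so $h(\rho)$ sends $W$ to isomorphisms. An appeal to Theorem \ref{thmcondition} then concludes.

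The step I expect to be the main obstacle is the claim that $Q_\bullet Z$ is a genuine Reedy cofibrant resolution of $Z$ --- in particular that each structural map $Q_n Z \to Z$ is a weak equivalence --- since the homotopical axioms are phrased relatively (as pushout--product conditions) and directly control only $Q_0$; one must verify that the correction terms involving $Q_\bullet(\emptyset)$ do not obstruct the left-Quillen behaviour of $A \mapsto Q_A(Z)$. Care is also needed to make the equivalence of Lemma \ref{lemmacubicalapprox} natural in $Z$, so that it can be fed cleanly into the Grothendieck-construction and cofinality bookkeeping of Corollary \ref{corollaryhomotopycolimit}, Corollary \ref{corollaryhomotopycolimitcube} and Proposition \ref{propfinal}.
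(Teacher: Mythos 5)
Your overall strategy coincides with the paper's: apply Theorem \ref{thmcondition} (conditions (1) and (2) being the easy ones), rewrite the relevant homotopy colimits via Corollary \ref{corollaryhomotopycolimitcube}, and conclude using Lemma \ref{lemmacubicalapprox} together with the contractibility of $\categ M_c \downarrow_V X$. The obstacle you flag --- that $(Q_n Z)_{n \in \square}$ must be a Reedy cofibrant resolution of $Z$ for cofibrant $Z$, despite the $Q_\bullet(\emptyset)$ correction terms in the pushout--product axioms --- is real, but it is not what separates your argument from the paper's: the paper needs exactly the same fact, both to apply Lemma \ref{lemmacubicalapprox} to $Q_-(X)$ and for its span functor $(Z,n,g) \mapsto (X \leftarrow Q_n Z \to Y)$ to take values in $W^{-1}\categ M(X,Y)$, and it uses it without comment. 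So flagging it is fair, and it is a shared burden rather than a gap specific to you.

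The genuine gap is in your treatment of condition (3). You propose to identify the diagram $Z \mapsto \square \downarrow \categ M^Q(Z,Y)$ with $Z \mapsto W^{-1}\categ M(Z,Y)$ over $J = (\categ M_c \downarrow_V X)^\op$, and then to replace the latter by the constant diagram $\hammock \categ M(X,Y)$. This does not typecheck: $Z \mapsto \square \downarrow \categ M^Q(Z,Y)$ is contravariant in $Z \in \categ M_c \downarrow_V X$ (precompose with $Q_n h$), whereas $W^{-1}\categ M(Z,Y)$ is functorial in the opposite direction --- a span $Z \leftarrow U \to Y$ can only be pushed forward along $h : Z \to Z'$ (using $h \in W$), never pulled back --- so the two are diagrams on opposite categories and the comparison functors of Lemma \ref{lemmacubicalapprox} are natural in $Z$ only up to non-invertible $2$-cells. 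Furthermore, the principle you then invoke --- objectwise equivalence to a constant diagram over a contractible index category identifies homotopy colimits --- is false without naturality: homotopy colimits depend on the structure maps, not just on objectwise values (the double mapping cylinder of two constant self-maps of a circle, indexed by the contractible pushout category, is not a circle). The paper's proof is engineered precisely to sidestep this: it builds one strict, explicitly defined functor from the single Grothendieck construction $\int^t_{Z} \square \downarrow \hom_{\categ M}(Q_- Z, Y)$ into the \emph{fixed} category $W^{-1}\categ M(X,Y)$ (spans over $X$, not over $Z$; equivalently, it rectifies the lax comparison into a functor on the Grothendieck construction), applies Lemma \ref{lemmacubicalapprox} only once, at $X$, shows that the inclusion of $\square \downarrow \hom_{\categ M}(Q_-(X),Y)$ at the object $X$ is a homotopy equivalence (contractibility of the index plus objectwise equivalences within that one diagram), and finishes with strict natural transformations and 2-out-of-3. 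Your argument needs this rectification step --- which is the actual heart of the paper's proof --- before it closes; the same issue also undercuts your derivation of condition (1), which presumes the mapping-space identification is compatible with composition.
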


\begin{proof}
It suffices to show that this functor satisfies the conditions of Theorem \ref{thmcondition}. The two first conditions are straightforward to check. Thus, let us show that the third condition is satisfied. By Corollary \ref{corollaryhomotopycolimitcube}, this amounts to prove that for any two fibrant-cofibrant objects $X,Y$ of $\categ M$ the functor
$$
\int_{Z \in  (\categ M_c \downarrow_V X)}^t \hom_{\categ M_{cf}}(Z,Y) \to \int_{Z \in  (\categ M_c \downarrow_V X)}^t \square \downarrow \hom_{\categ M}(Q_- Z,Y)
$$
that sends an element $(Z, f : Z \to Y)$ to $(Z, 0, f \circ \beta(Z) : Q_0(Z) \to Y)$ is an homotopy equivalence.

We have a functor
$$
 \int_{Z \in  (\categ M_c \downarrow_V X)}^t \square \downarrow \hom_{\categ M}(Q_- Z,Y)
\to W^{-1} \categ M(X,Y)
$$
that 
\begin{itemize}
    \itemt sends an object $(f: Z \to X, n, g : Q_n(Z) \to Y)$ to the span
$$
(X \leftarrow Q_n Z \to Y) \in W^{-1} \categ M (X,Y);
$$
\itemt sends a morphism from $(Z,n,Q_n(Z) \to Y)$ to $(Z',m,Q_m(Z') \to Y)$ given by a morphism $f :  Z \to Z'$ and a map $\phi : \square[n] \to \square[m]$ to the morphism of spans induced by the following diagram
$$
\begin{tikzcd}
&& Q_n(Z)
\ar[ld] \ar[rdd] \ar[d,"Q_\phi"]
\\
& Z
\ar[ld] \ar[d,"f"]
& Q_m(Z)
\ar[d,"Q_m(f)"] \ar[l] \ar[rd]
\\
X
& Z'
\ar[l]
& Q_m(Z')
\ar[l] \ar[r]
& Y .
\end{tikzcd}
$$
\end{itemize}
This functor fits in the following diagram whose square commutes up to a canonical natural transformation induced by the maps $Q_0(Z) \to Z$
$$
\begin{tikzcd}
& \square \downarrow \hom_{\categ M}(Q_-(X), Y)
\ar[d]
\\
\int_{Z\in  (\categ M_c \downarrow_V X)}^t \hom_{\categ M_{cf}}(Z,Y) \ar[r] \ar[d]
&\int_{Z \in  (\categ M_c \downarrow_V X)}^t \square \downarrow \hom_{\categ M}(Q_- Z,Y)
\ar[d]
\\
V^{-1}\categ M (X,Y)
\arrow[ru, Leftarrow,shorten >= 10pt, shorten <=10pt]
\ar[r]
& W^{-1}\categ M (X,Y) .
\end{tikzcd}
$$
The composite right vertical map is an homotopy equivalence by Lemma \ref{lemmacubicalapprox}. Its first component is an homotopy equivalence since the category $\categ M_c \downarrow_V X$ is homotopically trivial and since the functors
$\square \downarrow \hom_{\categ M}(Q_- (Z), Y) \to \square \downarrow \hom_{\categ M}(Q_- (Z'), Y)$ induced by the maps $Z' \to Z$ are homotopy equivalences. Hence, by the 2-out-of-3 property, the second component of this map is an homotopy equivalence.
We also already know that the bottom horizontal map and the left vertical map are homotopy equivalences. Hence the top horizontal map is an homotopy equivalence.
\end{proof}

\bibliographystyle{amsalpha}
\bibliography{bib}

\end{document}